\numberwithin{equation}{section}
\numberwithin{figure}{section}
\numberwithin{table}{section}
\theoremstyle{plain}
\newtheorem{theorem}{Theorem}[section]
\newtheorem{proposition}[theorem]{Proposition}
\newtheorem{lemma}[theorem]{Lemma}
\newtheorem{corollary}[theorem]{Corollary}
\theoremstyle{definition}
\newtheorem{definition}[theorem]{Definition}
\theoremstyle{remark}
\newtheorem{remark}[theorem]{Remark}
\newtheorem{example}[theorem]{Example}
\DeclareMathOperator{\Aut}{Aut}
\DeclareMathOperator{\End}{End}
\DeclareMathOperator{\Hom}{Hom}
\DeclareMathOperator{\Map}{Map}
\DeclareMathOperator{\Syl}{Syl}
\DeclareMathOperator{\res}{Res}
\DeclareMathOperator{\ind}{Ind}
\DeclareMathOperator{\ten}{Ten}
\DeclareMathOperator{\jnd}{Jnd}
\DeclareMathOperator{\OH}{\mathcal{O} _{\cH}}
\newcommand{\un}{\underline}
\newcommand{\Hn}{\mathfrak{Hom}}
\newcommand{\bd}{\partial}
\newcommand{\brres}{\mathrm{BrRes}}  
\newcommand{\id}{\mathrm{id}}
\newcommand{\Dim}{\mathrm{Dim}}
\newcommand{\f}{\mathfrak{f}}
\def\cF{\mathcal F}\def\cH{\mathcal H}
\def\cK{\mathcal K}
\def\cM{\mathcal M}\def\cO{\mathcal O}\def\cP{\mathcal P}
\def\bD{\mathbf D}
\def\CC{\mathbb C}
\def\FF{\mathbb F}
\def\KK{\mathbb K}
\def\RR{\mathbb R}
\def\ZZ{\mathbb Z}
\def\id{\mathrm{id}}
\renewcommand*\env@matrix[1][c]{\hskip -\arraycolsep
  \let\@ifnextchar\new@ifnextchar
  \array{*\c@MaxMatrixCols #1}}
\def\maprt#1{\smash{\,\mathop{\longrightarrow}\limits^{#1}\,}}
\begin{document}

\title{Dade Groups for Finite Groups and Dimension Functions}
\author{Matthew Gelvin and Erg\"un Yal\c c\i n}

\date{\today}
 
\address{Department of Mathematics, Bilkent University, 06800 
Bilkent, Ankara, Turkey}

\email{matthew.gelvin@bilkent.edu.tr, yalcine@fen.bilkent.edu.tr}

\begin{abstract}  
Let $G$ be a finite group and $k$ an algebraically closed field of characteristic $p>0$.
We define the notion of a Dade $kG$-module as a generalization of endo-permutation modules for $p$-groups.
We show that under a suitable equivalence relation, the set of equivalence classes of Dade $kG$-modules 
forms a group under tensor product, and  the group obtained this way is isomorphic to the Dade 
group $D(G)$ defined by Lassueur.  We also consider the subgroup $D^{\Omega} (G)$ of $D(G)$ generated by relative syzygies $\Omega_X$, 
where $X$ is a finite $G$-set.  If $C(G,p)$ denotes the group of superclass functions defined on the $p$-subgroups 
of $G$, there are natural generators $\omega_X$ of $C(G,p)$, and we prove the existence of a well-defined group 
homomorphism $\Psi_G:C(G,p)\to D^\Omega(G)$ that sends $\omega_X$ to $\Omega_X$. 
The main theorem of the paper is the verification that the subgroup of $C(G,p)$ consisting of the dimension functions 
of $k$-orientable real representations of $G$ lies in the kernel of $\Psi_G$.

\end{abstract}

\thanks{2010 {\it Mathematics Subject Classification.} Primary: 19A22; Secondary: 20C20, 57S17.}

\keywords{Dade group, Endo-permutation module, Burnside ring, Borel-Smith functions}

\maketitle


\section{Introduction}\label{sect:Introduction}
 
Let $G$ be a finite group with Sylow $p$-subgroup $S$, and let $k$ be an algebraically 
closed field of characteristic $p>0$. Throughout we assume that all $kG$-modules are finitely generated. 
A $kG$-module $M$ is a \emph{permutation module} if it has a $G$-invariant basis. A $kG$-module 
$M$ is called an \emph{endo-permutation module} if $\End(M)\cong M^* \otimes M$ is a permutation $kG$-module. 
Endo-permutation modules play an important role in representation theory, for example, they appear as sources of simple modules.

When $G=S$ is a $p$-group, every permutation $kS$-module $k[S/Q]$ formed by linearizing a transitive $S$-set
is indecomposable. Hence 
a summand of a permutation module is also a permutation module. This together with other properties  
make it possible to define a group of endo-permutation modules. An endo-permutation module is said to be \emph{capped} 
if it has a summand with vertex $S$. Two endo-permutation modules $M$ and $N$ are called \emph{compatible}
if $M\oplus N$ is also an endo-permutation module. Compatibility defines an equivalence relation on endo-permutation modules. 
The \emph{Dade group} $D(S)$ of a $p$-group $S$ is defined 
to be the group whose elements are the equivalence classes of capped endo-permutation $kS$-modules and whose 
group operation is induced by tensor product, i.e., $[M]+[N] :=[M\otimes N]$.  The Dade group of a $p$-group has been studied 
by many authors; a complete description in terms of the genetic sections of the group is given by 
Bouc in \cite{Bouc-Dade} (see also \cite{Dade-Endo}, \cite{CarlsonThevenaz-TorsionEndo},   
\cite{CarlsonThevenaz-Classification}, and  \cite{BoucThevenaz-Endo-permutation}).

When $G$ is a finite group, the situation is more complicated since transitive permutation $kG$-modules need not be 
indecomposable. For finite groups it makes sense to extend 
the definition of endo-permutation $kG$-modules in the following way: A $kG$-module $M$ is a 
\emph{$p$-permutation module} if it is a summand of a permutation $kG$-module. 
This is equivalent to requiring that $\res^G_S M$ be a permutation $kS$-module. 
A $kG$-module $M$ is called an \emph{endo-$p$-permutation module} if 
$\End(M) \cong M ^* \otimes M$ is a $p$-permutation module.

In \cite{Urfer-Endo}, Urfer showed that the source of an indecomposable endo-$p$-permutation module $M$ with vertex $S$  
is a capped endo-permutation $kS$-module and its equivalence class in $D(S)$ 
is $G$-stable (see \cite[Theorem 1.5]{Urfer-Endo}). Two endo-$p$-permutation 
modules are declared equivalent if their corresponding source modules represent the same class in $D(S)$.
Under this equivalence relation, the equivalence classes of capped endo-$p$-permutation modules have a natural 
group structure induced by the tensor product, which yields a group isomorphic 
to the subgroup of $G$-stable elements $D(S)^{G-st}$ in $D(S)$. Unfortunately, this definition of equivalence 
is too coarse; many interesting endo-$p$-permutation modules are identified with each other.
For example, if $S$ is normal in $G$, then any two one-dimensional $kG$-representations with kernel $S$ are 
equivalent.  

Another approach to defining the Dade group of a finite group is given by Lassueur in \cite{Lassueur-Dade}.
There, one considers endo-$p$-permutation modules that are endotrivial relative to the family of non-Sylow $p$-subgroups 
of $G$. Such modules are called \emph{strongly capped}. Every strongly capped $kG$-module $M$ has 
a unique indecomposable summand, called the \emph{cap} of $M$. Two 
strongly capped modules are declared equivalent if their caps are isomorphic. 
Lassueur defines the Dade group $D(G)$ as the group whose elements are the equivalence classes of 
strongly capped endo-$p$-permutation $kG$-modules and whose group operation is induced by the tensor product 
(see \cite[Cor.Def. 5.5]{Lassueur-Dade}). 

Lassueur's definition of the Dade group captures the differences 
between one dimensional representations when the Sylow $p$-subgroup $S$ is normal in $G$. Moreover, the relationship 
between Lassueur's Dade group and Urfer's $G$-stable elements notion is completely understood: Let $\Upsilon(G) \leq D(G)$ denote the subgroup of equivalence classes of strongly capped indecomposable $kG$-modules corresponding to one-dimensional representations of $N_G(S)/S$ under the Green correspondence. Lassueur proves that there is a short exact sequence of abelian groups
 $$0 \to \Upsilon  (G) \to D(G) \to D(S)^{G-st} \to 0$$
where the second map in the sequence is given by restriction to the Dade group of $S$  (see \cite[Theorem 7.3]{Lassueur-Dade}).

As a generalization of Lassueur's definition of strongly capped endo-$p$-permutation module, we define a notion of a Dade $kG$-module. 

\begin{definition}\label{def:DadeModule} A $kG$-module $M$ is a \emph{Dade $kG$-module} if there is an integer $n\geq 0$ such that 
$$\End(M) \cong k^n \oplus W$$ for some $p$-permutation module $W$, all of whose
indecomposable summands have vertices that are non-Sylow $p$-subgroups of $G$. 
\end{definition}

A Dade module is \emph{capped} if it has a Sylow-vertex component, or equivalently if $n \geq 1$ in the above decomposition (see Lemma  \ref{lemma:CappedEquiv}). Two Dade modules $M$ and $N$ are \emph{compatible} if $M\oplus N$ is a Dade module, or equivalently if $M^* \otimes N$ is a Dade module. Being compatible defines an equivalence relation on the set of capped Dade modules, which we denote by $M \sim N$  (see Lemma \ref{lem:EquivRelations}). We show the following: 
 
\begin{theorem}\label{thm:DadeGroup} 
Let $D(G)$ denote the set of equivalence classes of capped Dade $kG$-modules under the equivalence relation defined above.
Then the operation $[M]+[N]:=[M\otimes N]$  defines an abelian group structure on $D(G)$. Moreover, the group $D(G)$ defined 
this way is isomorphic to the Dade group defined by Lassueur in \cite{Lassueur-Dade} using relative endotrivial modules.
\end{theorem}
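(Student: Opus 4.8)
The strategy is to first establish the purely module-theoretic facts needed to make the definitions work, then identify our $D(G)$ with Lassueur's. The plan is to proceed in the following steps. First I would verify the two claimed equivalences in the definitions: that $M$ is capped (has a Sylow-vertex summand) if and only if $n\geq 1$ in the decomposition $\End(M)\cong k^n\oplus W$, and that $M\oplus N$ is Dade if and only if $M^*\otimes N$ is Dade. For the former, note that $M$ has a summand of vertex $S$ iff $M^*\otimes M$ has a summand of vertex $S$ (a standard vertex argument using that vertices of summands of $M^*\otimes M$ are subconjugate to those of $M$, together with the fact that $k\mid M^*\otimes M$ whenever $M\neq 0$ with the trivial summand detecting a full-vertex component); since $k$ is the unique indecomposable $p$-permutation module of vertex $S$, this says exactly $n\geq 1$. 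For the latter, expand $\End(M\oplus N)\cong \End(M)\oplus \End(N)\oplus (M^*\otimes N)\oplus (N^*\otimes M)$ and use that a direct sum of modules of the required form $k^n\oplus W$ (with $W$ having only non-Sylow vertices) is again of that form, and conversely a summand of such a module has the same shape. The only subtlety is bookkeeping the trivial summands versus the non-Sylow part; this uses the Krull–Schmidt theorem and that $k$ is not a summand of any $p$-permutation module with only non-Sylow vertices.

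Next I would check that compatibility is an equivalence relation on capped Dade modules (Lemma 1.4 as cited) — reflexivity and symmetry are immediate, and transitivity follows by the same $\End(M\oplus N)$ expansion together with a cancellation argument: if $M\sim N$ and $N\sim L$, then from $M^*\otimes N$ and $N^*\otimes L$ being Dade one deduces $M^*\otimes L$ is Dade, using that $N$ is capped so that $N^*\otimes N$ contains $k$ and one can "route through" $N$. Then I would show $[M]+[N]:=[M\otimes N]$ is well defined and gives an abelian group: associativity and commutativity are inherited from $\otimes$, the class $[k]$ is the identity, and the inverse of $[M]$ is $[M^*]$ since $M^*\otimes M\cong\End(M)$ is by hypothesis a Dade module equivalent to $k$. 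Well-definedness reduces to: if $M\sim M'$ then $M\otimes N\sim M'\otimes N$, i.e. $(M\otimes N)^*\otimes(M'\otimes N)\cong (M^*\otimes M')\otimes(N^*\otimes N)$ is Dade; this is the statement that a tensor product of two Dade modules is Dade, which in turn comes down to checking that the tensor product of two $p$-permutation modules with only non-Sylow vertices, together with trivial summands, still has this shape — here one invokes that the vertices of indecomposable summands of $U\otimes V$ are contained in intersections of conjugates of vertices of $U$ and $V$, so non-Sylow vertices stay non-Sylow, while the $k^n$ cross-terms contribute the trivial part.

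Finally, for the isomorphism with Lassueur's Dade group, I would produce mutually inverse maps. Given a capped Dade module $M$, Definition 1.1 says $\End(M)$ is $p$-permutation with trivial part plus non-Sylow-vertex part, which is precisely Lassueur's condition that $M$ be \emph{strongly capped} (endotrivial relative to the family of non-Sylow $p$-subgroups); thus $M$ has a well-defined cap, and $M\mapsto[\text{cap}(M)]$ lands in Lassueur's $D(G)$. Conversely a strongly capped module is a capped Dade module by the same reinterpretation. One checks these assignments respect the respective equivalence relations — our compatibility relation must be shown to coincide with "having isomorphic caps", which follows because $M\sim N$ forces $M$ and $N$ to have the same cap (compare indecomposable summands of vertex $S$ via Krull–Schmidt after tensoring with $N^*$) — and that they are inverse bijections and group homomorphisms, the latter because tensor product of caps agrees up to the equivalence with the cap of the tensor product. \textbf{The main obstacle} I anticipate is the careful handling of the tensor product of $p$-permutation modules with non-Sylow vertices and the trivial summands: one must be sure that no Sylow-vertex summand is created by tensoring (controlled by the vertex-intersection bound) and that the count of trivial summands behaves correctly, since this is what underlies both well-definedness of the group operation and the compatibility with Lassueur's cap. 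Everything else is a reorganization of known $p$-permutation module theory and the Krull–Schmidt theorem.
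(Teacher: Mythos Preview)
Your outline for the first half (compatibility is an equivalence relation, $[M]+[N]:=[M\otimes N]$ is well-defined with identity $[k]$ and inverse $[M^*]$) matches the paper's argument and is correct.

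There are two related gaps in the second half. First, it is not true that $k\mid M^*\otimes M$ whenever $M\neq 0$: take $M$ projective. What you actually need for the forward direction of ``$M$ has a vertex-$S$ summand $\Leftrightarrow$ $\End(M)$ has one'' is the nontrivial fact that if $U$ is an indecomposable endo-$p$-permutation module with vertex $S$ then $p\nmid\dim_k U$, so that $k\mid U^*\otimes U$ by the Benson--Carlson lemma. This is Lassueur's Lemma~5.1, proved in the paper via a relative-projectivity and Frobenius-reciprocity argument; it is not a formal consequence of vertex bookkeeping.

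Second, and more seriously, you identify ``capped Dade'' with Lassueur's ``strongly capped'', but these are \emph{not} the same class: a capped Dade module has $\End(M)\cong k^n\oplus W$ with $n\geq 1$, whereas strongly capped means $n=1$ exactly (equivalently, $M$ contains a \emph{unique} copy of its cap). So the map to Lassueur's group is not the identity on representatives, and your justification for it collapses. The paper's route is: (a) prove that any capped Dade module has a unique-up-to-isomorphism Sylow-vertex component, its \emph{cap} --- done by showing that for two Sylow-vertex summands $U,V$ of $M$ the module $U^*\otimes V$ has a Sylow-vertex component (using the $p\nmid\dim U$ fact above to get $k\mid \End(U)$, hence $V\mid U\otimes(U^*\otimes V)$), which must then be $k$ by Sylow-vertex triviality of $\End(M)$, whence $U\cong V$ by Benson--Carlson; (b) show that two capped Dade modules are compatible iff their caps are isomorphic; (c) observe that an indecomposable capped Dade module \emph{is} strongly capped, so $[M]\mapsto[\mathrm{Cap}(M)]$ gives the isomorphism. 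Your sketch ``compare indecomposable summands of vertex $S$ via Krull--Schmidt after tensoring with $N^*$'' does not supply step~(a): Krull--Schmidt gives uniqueness of a decomposition, not that two a priori distinct Sylow-vertex summands of a single $M$ are isomorphic. The Benson--Carlson lemma, together with the $p\nmid\dim$ result, is the missing ingredient throughout.
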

 
We prove Theorem \ref{thm:DadeGroup} in two parts, as Propositions \ref{pro:DadeGroup} and  \ref{pro:IsomDade}. We also show that every capped Dade module has, up to isomorphism, a unique indecomposable summand with vertex  $S$, called the \emph{cap} of $M$ (see Proposition \ref{pro:UniqueCaps}).  The cap of a Dade module is a strongly capped endotrivial module in the sense of Lassueur. In fact, an endo-$p$-permutation $kG$-module $M$ is strongly capped if and only if $M$ is a capped Dade module with a unique copy of its cap (see Proposition \ref{pro:StronglyCapped}).  

One important source of Dade modules are the $kG$-modules defined as relative syzygies. Given a  $G$-set $X$, the kernel of the augmentation map $\varepsilon: kX\to k$
is called the \emph{relative syzygy of $X$}, and is denoted by $\Delta (X)$. Since the restriction of $\Delta (X)$ to $S$ is a endo-permutation module, $\Delta(X)$ is an endo-$p$-permutation module. Using arguments similar to those used in the $p$-group case, one can show that 
if $X$ is a $G$-set such that $X^S=\emptyset$, then $\Delta(X)$ is a capped Dade module, and hence defines a class $\Omega_X:=[\Delta (X)]$ in $D(G)$.  We extend this definition to all $G$-sets by declaring $\Omega_X=0$ whenever $X^S \neq \emptyset$. 

\begin{definition} The subgroup of $D(G)$ generated by the elements $\Omega _X$ as $X$ ranges over all $G$-sets is called  the \emph{Dade group generated by relative syzygies}, and is denoted by $D^{\Omega} (G)$.  
\end{definition}

Many results known for relative syzygies over $p$-groups  hold for relative syzygies over finite groups as well. We prove these results 
in Section \ref{sect:RelativeSyzygies}. In particular,  we show that $D^{\Omega} (G)$ is generated by relative syzygies of the form 
$\Omega _{G/P}$ where $P \leq G$ is a non-Sylow $p$-subgroup of $G$ (see Proposition \ref{prop:Generates}). 

For a finite group $G$ and a fixed prime $p$, we denote by $\cF_p$ the family of all $p$-subgroups in $G$. 
A function $f : \cF_p \to \ZZ$ that is constant on the $G$-conjugacy classes of subgroups in $\cF_p$ is called a \emph{superclass function}. 
The set of superclass functions defined on $\cF_p$ forms a group under addition. We denote this group by $C(G,p)$. 
For each $G$-set $X$, there is a superclass function $\omega_X$ defined by 
$$\omega _X (P) =\begin{cases} 1 \ \text{  if   }  \ X^P \neq \emptyset, \\ 0 \  \text{ otherwise.} \end{cases} $$
There are relations among the superclass functions $\omega_X$ coming from the
inclusion-exclusion principle. We prove that all relations satisfied by the generators $\omega _X$ in
$C(G, p)$ are already satisfied by the generators $\Omega_X$ of $D^{\Omega} (G)$. This gives us the following:

\begin{theorem}\label{thm:IntroBoucHom} There is a well-defined surjective group homomorphism 
$$\Psi _G : C(G, p) \to D^{\Omega} (G)$$ that sends $\omega_X$ to $\Omega_X$ for every $G$-set $X$. 
\end{theorem}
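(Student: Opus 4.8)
The plan is to construct $\Psi_G$ by linearity from the assignment $\omega_X \mapsto \Omega_X$ and then verify well-definedness by showing that every relation among the $\omega_X$ in $C(G,p)$ is respected by the $\Omega_X$ in $D^\Omega(G)$. First I would recall that $C(G,p)$ is a quotient of the free abelian group on isomorphism classes of (transitive) $G$-sets with $p$-isotropy: since $\omega_X$ depends only on the fixed-point data $\{X^P : P \in \cF_p\}$ and $\omega_{X \sqcup Y} = \omega_X + \omega_Y - \omega_{X}\cdot\omega_Y$ is not additive, the right framework is to regard $\omega_{-}$ as the image of the basis element $[X]$ of the Burnside ring $A(G)$ under the mark-type homomorphism $A(G) \to C(G,p)$ obtained by composing the mark homomorphism $A(G) \to \prod_{P \leq G}\ZZ$ with the map that records only whether each $p$-local mark is nonzero. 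Thus $C(G,p)$ is the image of $A(G)$, and a presentation of $C(G,p)$ as an abelian group is obtained from relations in $A(G)$ together with the extra relations forced by "truncating" marks to $\{0,1\}$; these latter are exactly the inclusion-exclusion relations the paper alludes to, namely $\omega_{X}+\omega_{Y}=\omega_{X\sqcup Y}+\omega_{X\times Y}$ whenever $X,Y$ have no common... — more precisely, $\omega_{X\sqcup Y} = \omega_X + \omega_Y - \omega_{X \times Y}$, since $(X\sqcup Y)^P\neq\emptyset \iff X^P\neq\emptyset$ or $Y^P\neq\emptyset$, and $(X\times Y)^P = X^P\times Y^P$.

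The key step is then to prove the corresponding identity in $D^\Omega(G)$:
\begin{equation*}
\Omega_{X\sqcup Y} = \Omega_X + \Omega_Y - \Omega_{X\times Y}
\end{equation*}
for all $G$-sets $X,Y$. Over a $p$-group this is the classical relation $\Omega_{X}+\Omega_{Y} \equiv \Omega_{X \sqcup Y} + \Omega_{X\times Y}$ in the Dade group proved via the short exact sequences relating $\Delta(X\sqcup Y)$, $\Delta(X)$, $\Delta(Y)$, $\Delta(X\times Y)$ and $\Delta(X)\otimes\Delta(Y)$; the isomorphism $\Delta(X\times Y)\oplus k \cong \Delta(X)\otimes\Delta(Y)\oplus k[?]$-type decompositions. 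I would lift those exact sequences to $kG$ verbatim — they are constructed from the augmentation maps $kX \to k$ and are $G$-equivariant by naturality — and then invoke the results of Section \ref{sect:RelativeSyzygies} (in particular that $\Delta(X)$ is a capped Dade module when $X^S = \emptyset$, and the behavior of the class function $M \mapsto [M]$ under these short exact sequences of $p$-permutation-by-Dade modules) to conclude the identity holds in $D^\Omega(G)$. Care is needed in the degenerate cases where $X^S$ or $Y^S$ is nonempty, since then some $\Omega$-terms vanish by fiat; I would check that the truncation relations defining $C(G,p)$ and the vanishing convention $\Omega_X = 0$ for $X^S\neq\emptyset$ are compatible — concretely, if $X^S\neq\emptyset$ then $\omega_X(P)=1$ for all $P\leq S$ up to conjugacy containing a fixed point, and one checks the relation degenerates consistently on both sides.

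Surjectivity is immediate once well-definedness is established, since $D^\Omega(G)$ is by definition generated by the $\Omega_X$, which are all in the image. The main obstacle I anticipate is organizing the relation module of $C(G,p)$ cleanly: one must argue that the inclusion-exclusion relations $\omega_{X\sqcup Y}+\omega_{X\times Y}=\omega_X+\omega_Y$, together with $\omega_X$ depending only on the $p$-fixed-point data, generate \emph{all} relations — equivalently, that two $\ZZ$-linear combinations $\sum a_i[X_i]$ and $\sum b_j[Y_j]$ have equal image in $C(G,p)$ if and only if they are connected by these moves. This is a combinatorial statement about the poset of $p$-subgroups and Möbius inversion; I expect it to follow from the observation that $\{\omega_{G/P} : P \in \cF_p/\text{conj}\}$, suitably ordered, is related to the standard basis of $C(G,p)$ by a unitriangular change of basis, so that the presentation is forced. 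The translation of this presentation across $\Psi_G$ is then bookkeeping, and the genuinely representation-theoretic content is entirely contained in the single module-theoretic identity displayed above, which is where I would concentrate the effort.
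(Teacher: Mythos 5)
Your key technical ingredient, the relation
\[
\Omega_{X\amalg Y}+\Omega_{X\times Y}=\Omega_X+\Omega_Y
\]
proved via $X$-split short exact sequences and the Relative Schanuel Lemma, is exactly the paper's Lemma~\ref{lem:Product}, and the unitriangular change-of-basis observation you mention at the end is exactly the paper's Lemma~\ref{lem:Basis}. But you have organized the argument in a way that leaves a genuine gap, which you yourself flag as the ``main obstacle'': you want to present $C(G,p)$ by the generating set $\{\omega_X\}$ and the inclusion--exclusion plus fixed-point-equivalence relations, and then verify that the $\Omega_X$ satisfy those same relations. The claim that these moves generate \emph{all} $\ZZ$-linear relations among the $\omega_X$ is plausible but nontrivial, and you do not prove it; a ``relation module'' argument of this kind would require its own combinatorial lemma, and that lemma is not in the paper.

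The paper avoids this entirely and the fix is short. Since $\{\omega_{G/P} : [P]\in\cF_p/G\}$ is a \emph{free} $\ZZ$-basis of $C(G,p)$ (your unitriangularity observation), you may simply define $\Psi_G$ by assigning $\omega_{G/P}\mapsto\Omega_{G/P}$ on basis elements, with the convention $\Omega_{G/S}=0$. There are no relations to check: well-definedness is automatic from freeness, and surjectivity is immediate from Proposition~\ref{prop:Generates}. What then still requires an argument is that this already-defined homomorphism sends $\omega_X$ to $\Omega_X$ for \emph{every} $G$-set $X$, not just the transitive ones. This is the content of Proposition~\ref{pro:BoucHom}, proved by a double induction (on the maximal order of point-stabilizers and on the number of orbits), using $\Omega_{X\amalg Y}+\Omega_{X\times Y}=\Omega_X+\Omega_Y$ and its exact parallel $\omega_{X\amalg Y}+\omega_{X\times Y}=\omega_X+\omega_Y$, together with Corollaries~\ref{cor:Reduction}, \ref{cor:Replace}, and \ref{cor:Relations2} to reduce to orbits with $p$-subgroup isotropy and to delete redundant orbits. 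Note also that the product relation for $\Omega$ is only proved under the hypothesis $X^S=Y^S=\emptyset$; the degenerate case $X^S\neq\emptyset$ (where $\Omega_X$ is set to $0$ by convention) is handled separately at the start of the induction, not by invoking the relation. You flag this case but do not resolve it, and it does require the explicit check that $\omega_X=\omega_{G/S}$ when $X^S\neq\emptyset$, which makes both sides vanish.

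So: right ingredients, but replace the ``present $C(G,p)$ by relations and verify them'' step with ``define on the free basis, then extend the formula to all $\omega_X$ by induction,'' and handle $X^S\neq\emptyset$ as a base case rather than inside the product relation.
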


We prove this theorem in Section \ref{sect:Superclass}. We call the homomorphism $\Psi_G$ the \emph{Bouc homomorphism} for $G$ since it is a generalization of the homomorphism defined by Bouc in \cite{Bouc-Remark} for $p$-groups. 

In the rest of the paper we focus on identifying  the kernel of the homomorphism $\Psi_G$ in terms of the subgroup of dimension functions of $k$-orientable real representations. The \emph{dimension function} of a real representation $V$ is the superclass function $\Dim (V)  : C(G, p)\to \ZZ$ defined by
$$\Dim (V) (P)=\dim _{\RR} (V^P) $$ 
for every $p$-subgroup $P \leq G$. 

For a real $G$-representation $V$, let $S(V)$ denote the unit sphere of $V$ under a $G$-invariant norm.  Then $X:=S(V)$ is a $G$-space, and for every $p$-subgroup $P\leq G$, the fixed point subspace $X^P =S(V^P)$ is homeomorphic to a sphere of dimension $\dim (V^P) -1$.


\begin{definition}\label{def:kOrientable} A real $G$-representation $V$ is called \emph{$k$-oriented} if  the $\overline N_G(P)=N_G (P)/P$-action on the reduced homology group $\widetilde H_* (S(V)^P, k) \cong k$ is trivial for every $P \in \cF_p$. The set of isomorphism classes of $k$-orientable real representations forms a group under direct sum, which we denote by $R_{\RR } ^{+} (G, k)$. 
\end{definition}

There is a group homomorphism $$\Dim : R^+ _{\RR} (G, k) \to C(G, p)$$ that takes a representation $V$ to its dimension function. The main result of this paper is the following theorem.

\begin{theorem}\label{thm:IntroInclusion} The image of the dimension function $\Dim : R^+ _{\RR} (G, k) \to C(G, p)$ lies in the kernel of Bouc homomorphism $\Psi _G : C(G, p) \to D^{\Omega} (G)$.
\end{theorem}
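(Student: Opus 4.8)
### Proof Plan

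The plan is to reduce the statement to a purely combinatorial identity in $C(G,p)$ via the following principle: the image of $\Dim$ is spanned by dimension functions of specific ``geometrically meaningful'' representations, and for each such generator we must exhibit an explicit relation among the $\Omega_X$ witnessing that $\Psi_G$ kills it. The natural candidates are the representations whose unit spheres carry a free-like $G$-CW structure with cells indexed by coset spaces $G/P$, $P$ a $p$-subgroup, so that the Euler characteristic / homology computation of $S(V)^Q$ translates directly into an inclusion-exclusion identity among the $\omega_{G/P}$. Concretely, I would first recall (from the theory of Borel--Smith functions, or from the results in Section \ref{sect:Superclass}) that $R^+_\RR(G,k)$ maps onto a subgroup of $C(G,p)$ that is generated by a manageable list of ``basic'' dimension functions, typically of the form $\Dim(V) = \sum_i (-1)^i \dim(\text{stuff})$ coming from linearizing the chains of a $G$-sphere; the key point is that a $k$-oriented real representation $V$ has the property that $S(V)$ is, $k$-homologically, a ``Moore $G$-space'' whose fixed points behave like honest spheres with trivial Weyl-group action on top homology.

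Next I would pass to the syzygy side. The central tool is that for a finite $G$-CW complex $Y$ which is $k$-homologically a sphere, with cells of the form $G/P\times D^n$, one gets a finite chain complex of permutation modules $0\to C_n\to\cdots\to C_0\to k\to 0$ (after appropriate augmentation), and comparing this to the Euler characteristic in the Burnside ring yields, via the Bouc homomorphism $\Psi_G$, a relation of the form $\sum_i (-1)^i \Omega_{X_i} = \Omega_{S(V)}$ in $D^\Omega(G)$, where each $X_i$ is the $G$-set of $i$-cells. Since $\Psi_G$ is well-defined on all of $C(G,p)$ (Theorem \ref{thm:IntroBoucHom}), and since $\Dim(V)$ is precisely the alternating sum $\sum_i (-1)^i \omega_{X_i}$ of the cell functions (this is the combinatorial shadow of $\widetilde H_*(S(V)^Q)\cong k$), it follows that $\Psi_G(\Dim V)$ equals a relative syzygy class $\Omega_{S(V)}$ which, because $V$ is $k$-oriented, must vanish. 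The orientability hypothesis is exactly what is needed to conclude $\Delta(S(V))$ is not merely an endo-$p$-permutation module but is $k$-trivial as a Dade module, i.e.\ $\Omega_{S(V)}=0$.

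I would structure the argument in these steps: (1) show that $D^\Omega(G)$ and the relation $\Omega_X$ behave well with respect to joins, i.e.\ $\Omega_{X*Y}=\Omega_X+\Omega_Y$ and more generally establish the ``syzygy of a $G$-sphere'' formula expressing $\Omega_{S(V)}$ as an alternating sum over a $G$-CW decomposition; (2) prove the key vanishing lemma: if $V$ is $k$-oriented then $\Delta(S(V))$ represents $0$ in $D^\Omega(G)$ — this is where the triviality of the $\overline N_G(P)$-action on $\widetilde H_*(S(V)^P,k)$ enters, via a Künneth / fixed-point argument showing $\End(\Delta(S(V)))$ has the required $p$-permutation form with only non-Sylow vertices after removing a trivial summand, using that restriction to $S$ already gives the $p$-group case where the result is classical; (3) combine (1) and (2) with $\Dim(V)=\sum(-1)^i\omega_{X_i}$ to get $\Psi_G(\Dim V)=\Omega_{S(V)}=0$; (4) since the $\Dim$ of $k$-oriented representations generate the relevant subgroup, conclude the image of $\Dim$ lies in $\ker\Psi_G$.

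The main obstacle I anticipate is step (2), the vanishing lemma: establishing that $k$-orientation of $V$ forces $\Omega_{S(V)}=0$ in $D^\Omega(G)$ rather than merely in some weaker quotient. Over a $p$-group this is governed by the Bouc--Thévenaz analysis of the kernel of the exponential map from the Burnside ring and the fact that Borel--Smith functions arising from representations die in the Dade group; for a general finite group one must control how the non-Sylow-vertex summands of $\End(\Delta(S(V)))$ interact with the $G$-action, and in particular verify that the orientability condition — a condition about Weyl group actions on top homology of fixed spheres — is precisely the obstruction. I expect this to require a careful Green-correspondence / Brauer-quotient argument reducing to the normalizers $N_G(P)$ and then to the $p$-group statement, together with the short exact sequence $0\to\Upsilon(G)\to D(G)\to D(S)^{G\text{-}st}\to 0$ of Lassueur to handle the ``one-dimensional representation'' discrepancy that orientability is designed to kill.
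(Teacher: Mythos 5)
Your plan captures the broad shape of the paper's argument (triangulate $S(V)$, relate the cell structure to $\omega$'s and $\Omega$'s, and use $k$-orientability to deal with the Sylow issue), but there is a concrete error and a structural inversion that together leave a real gap. The error: you write $\Dim(V)=\sum_i(-1)^i\omega_{X_i}$, but the $\omega_{X_i}$ are $0$-$1$ functions recording whether $X_i^Q\neq\emptyset$, so for a full triangulation $\sum_i\omega_{X_i}(Q)$ counts the integers $0\leq i\leq\dim X^Q$, giving $1+\dim X^Q=\Dim(X)(Q)$. There is no sign; an alternating sum would produce the Euler characteristic $1+(-1)^{\dim X^Q}$, which is the wrong superclass function. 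Correspondingly the syzygy relation is $[\widetilde H_n(X;k)]=\sum_{i}\Omega_{X_i}$ without signs (Lemma \ref{lem:ShortExact} gives $[L]=\Omega_X+[N]$, and you iterate), not an inclusion--exclusion identity.

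The structural inversion is more serious. You frame the argument as ``(1) establish $\Psi_G(\Dim V)=\Omega_{S(V)}$, (2) prove the vanishing lemma $\Omega_{S(V)}=0$'' and identify (2) as the hard step requiring $k$-orientability. In fact the order of difficulty is reversed: once the identity $\Psi_G(\Dim V)=[\widetilde H_n(S(V);k)]$ is known, the vanishing is immediate because the top reduced homology of a sphere is one-dimensional and orientability at $P=1$ makes the $G$-action trivial, i.e.\ $\widetilde H_n(S(V);k)\cong k$ so the class is $0$. The genuine work is in proving the identity in the first place: when $X^S\neq\emptyset$ the top homology $\widetilde H_n(X;k)$ need not even be a Dade module, and your claim that ``$\End(\Delta(S(V)))$ has the required $p$-permutation form'' cannot be checked by a K\"unneth argument. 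The paper's device (Theorem \ref{thm:CappedMooreHom}) is to truncate the chain complex over the orbit category $\cO_p(G)$ at degree $m=\underline n(S)$, use Bouc's characterization of finite projective resolutions over $\cO_p(G)$ (Proposition \ref{pro:BoucResolutions}) to control the Brauer quotients of the kernel $K_m(1)$, and then use the capped hypothesis---precisely that $\overline N_G(S)$ acts trivially on $\widetilde H_m(X^S;k)$, which is $k$-orientability at $P=S$, not at $P=1$---to see via the Green correspondence that $K_m(1)$ has $k$ as a summand, which starts off the inductive application of Lemma \ref{lem:ShortExact2}. So orientability enters in two distinct places with two distinct roles, which your sketch conflates. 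Your closing suggestion of deducing the result from the Lassueur exact sequence by restricting to $S$ only places $\Psi_G(f)$ in $\Upsilon(G)\cap D^\Omega(G)$, which is nontrivial in general (e.g.\ for $\Sigma_3$), so that route does not close the gap without the same topological input.
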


We prove Theorem \ref{thm:IntroInclusion} in Section \ref{sect:CappedMoore}. The proof relies on topological methods, which require us to consider Moore $G$-spaces. A \emph{Moore $G$-space over $k$ relative to the family $\cF_p$} is a $G$-CW-complex $X$ such that for every $P \in \cF_p$, the fixed-point subspace $X^P$ is a Moore space over $k$, i.e., the reduced homology $\widetilde H_* (X^P; k)$ is nonzero only in a single dimension. The \emph{dimension function} of a Moore $G$-space $X$ is defined to be the superclass function $\Dim (X):  C(G,p) \to \ZZ$ satisfying  $$\Dim (X) (P)=\dim (X^P)+1$$ for every $p$-subgroup $P\in \cF_p$.

If $V$ is a $k$-oriented real $G$-representation, then the unit sphere $X=S(V)$ can be triangulated to obtain a $G$-CW-complex, which is a Moore $G$-space whose homology is the trivial module $k$. Note that the dimension function of $X=S(V)$ as a Moore $G$-space
coincides with the algebraic dimension function $\Dim (V)$ defined before.
 
We show that the reduced homology of an $n$-dimensional Moore $G$-space $X$ whose point-stabilizers are all non-Sylow $p$-subgroups  is a capped Dade  $kG$-module. Moreover, if $X_i$ denotes the $G$-set of $i$-cells of $X$, then we have that the equality $$[\widetilde H_{n} (X; k)]=\sum _{i=1}^n \Omega _{X_i}$$ holds in $D^{\Omega} (G)$ (see Theorem \ref{thm:Moore}).
This gives the equality $[\widetilde H_{n} (X; k)]=\Psi _G(\Dim (X))$ under some minor technical assumptions.
This is analogous to the result proved in \cite{Yalcin-Moore} for Moore $S$-spaces when $S$ is a $p$-group.

We then consider Moore $G$-spaces whose isotropy subgroups are arbitrary, including the possibility that $X^S$ is nonempty.  If $X$ is a Moore space such that $X^S$ is $m$-dimensional, we say $X$ is \emph{capped} if the reduced homology module $\widetilde H _m (X^S ; k)$ has the trivial module $k$ as a summand.  We prove in Theorem \ref{thm:CappedMooreHom} that if $X$ is a capped $n$-dimensional Moore $G$-space, then the reduced homology module $\widetilde H_n (X; k)$ has a summand $M$ that is a Dade module whose class in $D(G)$ is equal to $\sum _{i=1}^n \Omega _{X_i}=\Psi _G( \Dim (X))$. We then apply this result to a $k$-orientable real representation sphere $X=S(V)$ and obtain that $\Psi _G (\Dim (X))=0$ in $D^{\Omega} (G)$. This completes the proof of Theorem \ref{thm:IntroInclusion}  (see the end of Section \ref{sect:CappedMoore}).
 
In the last section of the paper, we study the subgroup of $C(G,p)$ formed by dimension functions of $k$-orientable real representations of $G$. We show that these superclass functions are characterized by a set of conditions defined on certain subquotients of $G$. Some of these conditions come from  the Borel-Smith conditions defined on $p$-subgroups of $G$. We denote the subgroup of $C(G,p)$ satisfying the Borel-Smith conditions on the $p$-subgroups of $G$ by $C_b (G, p)$ (see  Definition \ref{def:BorelSmith}). We also consider some additional conditions called the \emph{oriented Artin conditions} similar to the conditions given by Bauer in \cite{Bauer} for finite groups (see Definition \ref{def:OrientedArtin}). We denote by $C_{ba^+} (G, p)$ the subgroup of $C(G, p)$ satisfying both the Borel-Smith conditions and the oriented Artin conditions. 

By modifying the arguments given by Bauer, we show that the image of the dimension function $\Dim : R_{\RR} ^+ (G) \to C(G,p)$ is equal to the subgroup $C_{ba^+} (G, p)$.  For $p=2$ the oriented Artin conditions always hold by trivial reasons, so in that case we have $C_b(G, p)=C_{ba^+} (G, p)$. This allows us to conclude the following:

\begin{corollary}\label{cor:introInequalities} With the notation introduced above, we have $$C_{ba^+ } (G, p) \subseteq \ker \Psi_G \subseteq C_b(G, p).$$
Moreover, when $p=2$, all the subgroups above are equal and we have a short exact sequence of abelian groups
$$0 \to C_b (G, 2) \maprt{j} C(G, 2) \maprt{\Psi_G}  D^{\Omega} (G) \to 0.$$ 
\end{corollary}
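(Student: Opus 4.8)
The plan is to assemble Corollary~\ref{cor:introInequalities} from the pieces the paper has already set up, so the work is mostly bookkeeping rather than new mathematics. First I would record the two containments. The left containment $C_{ba^+}(G,p)\subseteq\ker\Psi_G$ is immediate from Theorem~\ref{thm:IntroInclusion} together with the identification (promised for the final section) of $C_{ba^+}(G,p)$ as the image of $\Dim\colon R^+_{\RR}(G,k)\to C(G,p)$: every element of $C_{ba^+}(G,p)$ is $\Dim(V)$ for some $k$-orientable real $G$-representation $V$, and Theorem~\ref{thm:IntroInclusion} says $\Psi_G(\Dim(V))=0$. The right containment $\ker\Psi_G\subseteq C_b(G,p)$ is where the real content sits: I would need to show that any superclass function $f$ with $\Psi_G(f)=0$ necessarily satisfies the Borel--Smith conditions on the $p$-subgroups of $G$. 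The natural route is to reduce to the $p$-group case via restriction. For each $p$-subgroup $Q\le G$ one has compatible restriction maps $\res^G_Q\colon C(G,p)\to C(Q,p)$ and (on the Dade side) $D^{\Omega}(G)\to D^{\Omega}(Q)$ intertwining the Bouc homomorphisms $\Psi_G$ and $\Psi_Q$; since the Borel--Smith conditions are local — a function lies in $C_b(G,p)$ iff each $\res^G_Q f$ satisfies them for $p$-subgroups $Q$ — it suffices to prove the claim for $Q$ a $p$-group. For a $p$-group $S$, Bouc's theorem in \cite{Bouc-Remark} identifies $\ker\Psi_S$ exactly with the group of Borel--Smith functions, which gives the containment after restricting.

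The main obstacle is verifying the compatibility of restriction with the Bouc homomorphisms and ensuring the Borel--Smith conditions really are detected by restriction to $p$-subgroups. The first point should follow formally from the definition of $\Psi_G$ on generators: $\res^G_Q$ sends $\omega_X$ to $\omega_{\res^G_Q X}$, and one needs the analogous statement $\res^G_Q\Omega_X=\Omega_{\res^G_Q X}$ in $D^{\Omega}(Q)$, which is plausibly among the ``many results known for relative syzygies over $p$-groups'' that Section~\ref{sect:RelativeSyzygies} extends to finite groups — I would cite that. The second point is essentially the definition of $C_b(G,p)$ in Definition~\ref{def:BorelSmith}, since the Borel--Smith conditions for $G$ are phrased in terms of sections of $p$-subgroups. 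One subtlety: Bouc's result describes $\ker\Psi_S$ for $S$ a $p$-group acting on \emph{itself}, and I must make sure that ``Borel--Smith on the $p$-subgroups of $G$'' unwinds correctly to ``Borel--Smith for each $\res^G_S f$ as a function on $\Syl_p$ and its subgroups'' — this is a compatibility of two definitions of the same conditions and should be routine.

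For the moreover clause, when $p=2$ the oriented Artin conditions hold automatically (as the paper notes, because $\pm 1$ are the only units and the relevant subquotients contribute nothing nontrivial), so $C_b(G,2)=C_{ba^+}(G,2)$ and the two containments collapse to the single equality $\ker\Psi_2=C_b(G,2)$. It then remains to see that the sequence
$$0 \to C_b (G, 2) \maprt{j} C(G, 2) \maprt{\Psi_G}  D^{\Omega} (G) \to 0$$
is exact: injectivity of $j$ is the inclusion of a subgroup, exactness at the middle is the kernel computation just obtained, and surjectivity of $\Psi_G$ is exactly Theorem~\ref{thm:IntroBoucHom}. So the only genuinely new ingredient beyond Theorems~\ref{thm:IntroBoucHom} and~\ref{thm:IntroInclusion} and the last section's description of $\operatorname{image}(\Dim)$ is the restriction argument proving $\ker\Psi_G\subseteq C_b(G,p)$, and I would expect that to be short given the $p$-group case in \cite{Bouc-Remark}.
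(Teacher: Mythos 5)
Your proposal follows essentially the same route as the paper: the left containment comes from Theorem~\ref{thm:IntroInclusion} together with the identification of $C_{ba^+}(G,p)$ as $\operatorname{im}(\Dim)$ (Theorem~\ref{thm:OriBauer}); the right containment is proved by restricting to the Sylow $p$-subgroup and invoking Bouc's $p$-group computation $\ker\Psi_S=C_b(S)$; and the exact sequence at $p=2$ follows from the triviality of the oriented Artin conditions together with the surjectivity in Theorem~\ref{thm:IntroBoucHom}. This is exactly Lemma~\ref{lem:BorelSmith} and the final theorem of Section~\ref{sect:OrientedArtin}.

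One technical slip worth flagging: you claim that for \emph{each} $p$-subgroup $Q\le G$ there is a compatible restriction map $D^{\Omega}(G)\to D^{\Omega}(Q)$. This is false for $Q$ not containing a Sylow $p$-subgroup --- Lemma~\ref{lem:Restrict} only produces $\res^G_H\colon D(G)\to D(H)$ when $S\le H$, and Example~\ref{ex:Restriction} exhibits a Dade $kG$-module whose restriction to a $p'$-subgroup is not a Dade module. Your argument survives because you only ever need $Q=S$: by Definition~\ref{def:BorelSmith}, $f\in C_b(G,p)$ means precisely that $\res^G_S f\in C_b(S)$, so the single restriction $\res^G_S$, which is well-defined on both $C(G,p)$ and $D^{\Omega}(G)$ and satisfies $\res^G_S\Omega_X=\Omega_{\res^G_S X}$ and $\res^G_S\omega_X=\omega_{\res^G_S X}$, already intertwines $\Psi_G$ with $\Psi_S$. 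You should drop the claim about arbitrary $Q$ and phrase the reduction as restriction to $S$ alone, as the paper does.
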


It is an open question whether the equality $C_{ba^+ } (G, p) = \ker \Psi_G$ also holds when $p$ is odd. This question is related to the existence of well-defined restriction and deflation maps on $D^{\Omega} (-)$ that commute with the homomorphism $\Psi_{(-)}$. If such maps exist, then one can prove the equality $C_{ba^+ } (G, p)=\ker \Psi _G$ using an inductive argument and by applying Lemma \ref{lem:SemiDirect}. 
Conversely, the existence of an exact sequence as the one given in Corollary \ref{cor:introInequalities} gives a biset functor structure to $D^{\Omega} (-)$, in a modified sense that allows only deflations with normal $p$-subgroups.  

{\bf Acknowledgement:}  The second author is supported by a T\" ubitak 1001 project (grant no: 116F194).

\setcounter{tocdepth}{1}
\tableofcontents


\section{Dade modules}\label{sect:DadeModules}

Throughout this paper, let $G$ be a finite group and $p$ a prime number, not necessarily dividing $|G|$.   
We write $S$ for a Sylow $p$-subgroup of $G$ and $P, Q,\ldots$ for arbitrary $p$-subgroups. 
Let $k$ be a fixed algebraically closed field of characteristic $p$. 

Our aim in this section is to give the definition and basic properties of Dade $kG$-modules.  
We assume that all modules are finitely generated.  For a $kG$-module $M$, we write $M^\ast$ for 
the dual $kG$-module $\Hom_k(M,k)$. When $N$ is a summand of $M$ we use the standard notation 
$N \mid M$ to express this relation. An indecomposable direct summand of a $kG$-module $M$ is called 
a \emph{component} of $M$.

We recall the following notion: The $kG$-module $M$ is called a \emph{permutation $kG$-module} if $M$ has a 
$G$-invariant $k$-basis $X$, i.e., $g \cdot X=X$ for all $g\in G$.  Any permutation $kG$-module can be written
\begin{equation}\label{eqn:permutation}
M\cong\bigoplus_{[H\leq G]} c_H \cdot k[G/H],
\end{equation}
where the sum is taken over the $G$-conjugacy classes of subgroups of $G$ and each $c_H$ is some 
non-negative integer.  

If $G=S$ is a $p$-group, Green's Indecomposability Criterion 
implies that a transitive permutation $S$-module is indecomposable (see \cite[Cor 11.6.3]{Webb-Book}). Hence in this case 
the right hand side of Equation (\ref{eqn:permutation}) is uniquely determined by the left. In particular, any direct summand 
of a permutation $kS$-module is also a permutation $kS$-module.

For arbitrary finite groups none of these properties needs to hold, so we turn to a more natural class of a module.  
 
\begin{definition}\label{def:pPermutation} A $k G$-module $M$ is called a \emph{$p$-permutation module} if any of the following equivalent conditions is satisfied:
\begin{enumerate}
\item The restriction $\res ^G _S M $ is a permutation $kS$-module.
\item $M$  is a direct summand of a permutation module 
of the form $$\bigoplus_{[P \leq G]} c_P\cdot k[G/P],$$ where the sum is taken over the $G$-conjugacy classes of 
$p$-subgroups of $G$.  
\item $M$ is a direct summand of a permutation module.
\end{enumerate} 
\end{definition}
 
The equivalence of the above conditions is easy to show. The main observation is that a $kG$-module $M$ is a direct 
summand of the module $\ind _S ^G \res ^G _SM$, which gives the implication $(i) \implies (ii)$.  The other implications 
are obvious (see \cite[Lemma 3.11.2]{Benson-Book1} for details).  

Given an indecomposable $kG$-module $M$, there is a subgroup $H \leq G$ that is minimal with 
the property that $M$ is a summand of $\ind ^G  _H \res^G _H M$. Such a subgroup is called a 
\emph{vertex of $M$}. It is easy to see that a vertex of $M$ is a $p$-subgroup of $G$ and that 
any two vertices are $G$-conjugate. 

If $M$ is an indecomposable $kG$-module with vertex $P$, then there is 
an indecomposable $kP$-module $U$ such that $M$ is a direct summand of $\ind _P ^G U$. 
This module is unique up to isomorphism and conjugation by elements in $N_G(P)$, and 
is called the \emph{source} of $M$ (see \cite[Thm 11.6.1]{Webb-Book}). 

From Definition \ref{def:pPermutation}, it is easy to see that an indecomposable $kG$-module $M$ is a $p$-permutation $kG$-module 
if and only if a vertex-source pair of $M$ is of the form $(P,k)$, so that the source is the trivial $kP$-module.  
A $kG$-module is called a \emph{trivial source module} if all of its components have a
trivial source. A $kG$-module $M$ is a $p$-permutation module if and only if it is 
a trivial source module.

\begin{definition} Let $M$ be a $p$-permutation $kG$-module. A component of $M$ whose vertex is a Sylow $p$-subgroup 
of $G$ is called a \emph{Sylow-vertex component} of $M$.  We call a $p$-permutation module \emph{Sylow-vertex trivial} if all its Sylow-vertex components, should they exist, 
are isomorphic to $k$.
\end{definition}

Note that a Sylow-vertex trivial $p$-permutation $kG$-module is isomorphic to a direct sum of the form $k^n \oplus W$, where $n \geq 0$ is a natural number and  $W$ is a $p$-permutation module whose components all have non-Sylow vertices. It is easy to see the following:

\begin{lemma}\label{lem:Closedness} The class of Sylow-vertex trivial  $p$-permutation $kG$-modules is closed under taking direct sums, 
direct summands, and tensor products.
\end{lemma}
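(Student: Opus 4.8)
The plan is to work with the concrete description recorded just before the statement: a $p$-permutation $kG$-module is Sylow-vertex trivial precisely when it is isomorphic to $k^{n}\oplus W$ for some integer $n\geq 0$ and some $p$-permutation module $W$ all of whose components have non-Sylow vertices. With this in hand, closure under direct sums is immediate, since $(k^{m}\oplus W)\oplus(k^{n}\oplus W')\cong k^{m+n}\oplus(W\oplus W')$ and, by the Krull--Schmidt theorem, the components of $W\oplus W'$ are exactly those of $W$ together with those of $W'$. Closure under direct summands is equally quick: if $N\mid M$ and $M$ is Sylow-vertex trivial, then $N$ is a $p$-permutation module by Definition \ref{def:pPermutation}(iii), and every component of $N$ --- in particular every Sylow-vertex component --- is a component of $M$ by Krull--Schmidt, hence isomorphic to $k$.

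The real content is closure under tensor products. First I would note that $M\otimes N$ is a $p$-permutation module whenever $M$ and $N$ are: restricting to $S$ makes it a tensor product of permutation $kS$-modules, which is again a permutation $kS$-module because the product of two $S$-invariant bases is $S$-invariant, so Definition \ref{def:pPermutation}(i) applies. Writing $M\cong k^{m}\oplus W$ and $N\cong k^{n}\oplus W'$ and expanding, one obtains $M\otimes N\cong k^{mn}\oplus (W')^{m}\oplus W^{n}\oplus(W\otimes W')$. The summand $k^{mn}$ contributes only copies of $k$, and $(W')^{m}$ and $W^{n}$ contribute only non-Sylow-vertex components by hypothesis, so the problem reduces to the following claim: if $W$ and $W'$ are $p$-permutation $kG$-modules all of whose components have non-Sylow vertices, then the same holds for $W\otimes W'$.

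To establish this, I would use that each component of $W$ has some non-Sylow vertex $R$ and so is a summand of $k[G/R]=\ind^{G}_{R}k$; hence $W\mid\bigoplus_{a}k[G/R_{a}]$ with all $R_{a}$ non-Sylow, and likewise $W'\mid\bigoplus_{b}k[G/R'_{b}]$. Thus $W\otimes W'$ is a direct summand of $\bigoplus_{a,b}k[G/R_{a}]\otimes k[G/R'_{b}]$, and the Mackey formula gives $k[G/R_{a}]\otimes k[G/R'_{b}]\cong\bigoplus_{g}k\bigl[G/(R_{a}\cap\,{}^{g}R'_{b})\bigr]$, the sum running over $(R_{a},R'_{b})$-double coset representatives $g$. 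Each subgroup $R_{a}\cap\,{}^{g}R'_{b}$ has order at most $|R_{a}|<|S|$, hence is again non-Sylow; and since any component of $k[G/T]$ has vertex contained in a $G$-conjugate of $T$, every component of $k[G/T]$ with $T$ non-Sylow has non-Sylow vertex. Applying Krull--Schmidt to the summand $W\otimes W'$ of this permutation module then gives the claim, and with it the lemma.

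The only step where anything substantial happens is the tensor-product case; within it, the load-bearing observation is that intersecting a (conjugate of a) proper subgroup of $S$ with any subgroup produces a subgroup of order strictly less than $|S|$, which is exactly what keeps the non-Sylow condition stable under the Mackey decomposition. Beyond assembling the standard facts (vertices and sources of $p$-permutation modules, the Mackey formula for a tensor product of two transitive permutation modules, and Krull--Schmidt), I do not expect any genuine obstacle.
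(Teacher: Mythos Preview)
Your argument is correct. The paper does not actually prove this lemma; it is stated immediately after the remark ``It is easy to see the following'' and left without proof, so there is no approach to compare against---your write-up simply supplies the details the authors omitted. If you want to shorten the tensor-product step, note that it already suffices to use the tensor identity $(\ind_R^G U)\otimes W'\cong\ind_R^G(U\otimes\res_R^G W')$ on a single non-Sylow vertex $R$ of a component of $W$, which shows every component of $W\otimes W'$ has vertex $G$-subconjugate to $R$; the full Mackey decomposition is not needed, though what you wrote is perfectly fine.
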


We now expand from permutation and $p$-permutation $kG$ modules to a larger class.  Recall that the $kG$-module $M$ is an \emph{endo-permutation $kG$-module} if $\End(M)\cong M^* \otimes M$ is a permutation $kG$-module via the natural diagonal action.  As above, this definition really works best when $G=S$ is a $p$-group, but  it has the following natural extension: We say that $M$ is an \emph{endo-$p$-permutation $kG$-module} if $\End(M)$ is a $p$-permutation $kG$-module, or equivalently if $\res ^G_S(M)$ is an endo-permutation $kS$-module.  
 
In this paper we consider endo-$p$-permutation $kG$-modules that satisfy an extra property.  A $kG$-module $M$ is called a  
\emph{Dade $kG$-module} if
$$\End(M) \cong k^n \oplus W$$ for some $n\geq 0$ and for some $p$-permutation module $W$, 
all of whose components have non-Sylow vertices (see Definition \ref{def:DadeModule}). In other words, a $kG$-module $M$ 
is a Dade module if $\End(M)\cong M^* \otimes M$ is a Sylow-vertex trivial $p$-permutation module.   
By definition a Dade module is an endo-$p$-permutation module, but not every endo-$p$-permutation module is a Dade module.  

\begin{example} Consider the symmetric group $G=\Sigma_3$ at the prime $p=3$. The Sylow $3$-subgroup $S$ is isomorphic to $C_3$, the cyclic group of order $3$. Let $M=k[G/S]$ be the permutation $kG$-module whose basis is the transitive $G$-set $G/S$. 
Then $M$ is a $p$-permutation module, hence an endo-$p$-permutation module. 
However, $M$ is not a Dade module since
 $$\End(M) \cong M ^* \otimes M  \cong 2k[G/S] \cong 2k\oplus 2 k(-1),$$ 
where $k(-1)$ denotes the alternating representation of $G$. 
Note that the component $k(-1)$ of $\End(M)$ has vertex $S$ but is not trivial. 
\end{example}

We also have a notion of a capped Dade module.  
 
\begin{definition}\label{def:Capped}  A Dade $kG$-module $M$ is \emph{capped} if it has a Sylow-vertex component.
\end{definition}

There are equivalent ways to say a Dade module is capped.  

\begin{lemma}\label{lemma:CappedEquiv} Let $M$ be a Dade $kG$-module. Then the following are equivalent:
\begin{enumerate}
\item\label{lemma:CappedEquiv_i} $M$ is capped, i.e., $M$ has a component with vertex $S$.
\item\label{lemma:CappedEquiv_ii} $\res^G _S M$ is a capped endo-permutation $kS$-module. 
\item\label{lemma:CappedEquiv_iii} $k$ is a summand of $\End(M)$. 
\end{enumerate}

\begin{proof} If $M$ has a component with vertex $S$, then $\res^G _S M$ also has a component with vertex $S$, so $\res^G_S M$ is a capped endo-permutation module. This gives $\ref{lemma:CappedEquiv_i} \Rightarrow \ref{lemma:CappedEquiv_ii}$. 

Now assume $\res^G _S M$ is a capped endo-permutation $kS$-module. Then $\res ^G _S (\End (M))=\End (\res^G _S M)$ has $k$ as a summand (see \cite[Lemma 12.2.6]{Bouc-BisetBook}).
Since $\End (M) \cong k^n \oplus W$ where $W$ has no components with vertex $S$, we see that we must have $n \geq 1$ in order to have that $k$ is a summand of $\res^G _S (\End (M))$. This gives $\ref{lemma:CappedEquiv_ii}\Rightarrow\ref{lemma:CappedEquiv_iii}$. 

To show the final implication $\ref{lemma:CappedEquiv_iii}\Rightarrow \ref{lemma:CappedEquiv_i}$, observe that if all components of $M$ have vertices strictly smaller than $S$, then all components of $\End(M) \cong M^* \otimes M$ have vertices strictly smaller than $S$ as well, so that $k$ cannot be a summand of $\End(M)$.
\end{proof}
\end{lemma}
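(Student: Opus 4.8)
The plan is to prove the three conditions equivalent via the cyclic chain $\ref{lemma:CappedEquiv_i} \Rightarrow \ref{lemma:CappedEquiv_ii} \Rightarrow \ref{lemma:CappedEquiv_iii} \Rightarrow \ref{lemma:CappedEquiv_i}$, invoking at each step only standard vertex--source theory (the Mackey formula, the projection formula $(\ind^G_H A)\otimes B \cong \ind^G_H(A \otimes \res^G_H B)$, the fact that dualizing preserves vertices, and Green's Indecomposability Criterion over the $p$-group $S$) together with the defining decomposition $\End(M) \cong k^n \oplus W$ of a Dade module. The single recurring idea is that the vertices of the components of a restricted module $\res^G_H N$, or of a tensor product $N \otimes N'$, are subconjugate to vertices of components of the inputs; since the trivial module has vertex $S$ both as a $kG$-module and as a $kS$-module, this suffices to keep the ``Sylow-vertex'' and ``non-Sylow-vertex'' parts separated throughout.

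For $\ref{lemma:CappedEquiv_i} \Rightarrow \ref{lemma:CappedEquiv_ii}$: since $M$ is endo-$p$-permutation, $\res^G_S M$ is automatically an endo-permutation $kS$-module, so it remains only to check it is capped. If $M_0 \mid M$ is a component with vertex $S$, then $M_0 \mid \ind^G_S \res^G_S M_0$, and choosing an indecomposable summand $U$ of $\res^G_S M_0$ through which this relative projectivity factors produces a component $U$ of $\res^G_S M$ whose vertex is forced to equal $S$ (otherwise $M_0$ would be relatively projective to a proper subgroup of $S$). Hence $\res^G_S M$ has a component with vertex $S$. For $\ref{lemma:CappedEquiv_iii} \Rightarrow \ref{lemma:CappedEquiv_i}$ I would argue by contraposition: if every component of $M$ has a non-Sylow vertex, then so does every component of $M^*$, and for components $M_i \mid M$, $M_j \mid M$ with vertices $P_i, P_j < S$ the projection formula gives $M_i^* \otimes M_j \mid \ind^G_{P_i}\bigl(\res^G_{P_i} M_i^* \otimes \res^G_{P_i} M_j\bigr)$, so every component of $M_i^* \otimes M_j$ has vertex subconjugate to $P_i$, hence non-Sylow. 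Thus no component of $\End(M) \cong M^* \otimes M$ has vertex $S$, so $k$ is not a summand of $\End(M)$, contradicting $\ref{lemma:CappedEquiv_iii}$. (This is exactly the closure statement of Lemma \ref{lem:Closedness} applied in the contrapositive direction.)

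For $\ref{lemma:CappedEquiv_ii} \Rightarrow \ref{lemma:CappedEquiv_iii}$: restriction commutes with $\End$, so $\res^G_S \End(M) \cong \End(\res^G_S M)$, and a capped endo-permutation $kS$-module has $k$ as a direct summand of its endomorphism algebra (\cite[Lemma 12.2.6]{Bouc-BisetBook}). On the other hand $\res^G_S \End(M) \cong k^n \oplus \res^G_S W$, and I claim $\res^G_S W$ has no summand isomorphic to $k$: any component $W_0$ of $W$ has a non-Sylow vertex $Q$, so $W_0 \mid \ind^G_Q k$ and the Mackey formula exhibits $\res^G_S W_0$ as a summand of a direct sum of modules $k[S/(S \cap {}^g Q)]$ with $|S \cap {}^g Q| \le |Q| < |S|$; by Green's Indecomposability Criterion each such module is indecomposable with a proper vertex, hence none is isomorphic to $k$. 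Comparing the multiplicity of $k$ on the two sides of $\res^G_S \End(M) \cong k^n \oplus \res^G_S W$ then forces $n \ge 1$, i.e., $k \mid \End(M)$.

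I do not expect a serious obstacle here; the only point requiring a little care is the last implication, where one must not merely know that $\res^G_S \End(M)$ contains a copy of $k$ but must locate that copy inside the genuine trivial part $k^n$ rather than inside the restricted $p$-permutation part $\res^G_S W$ — this is precisely what the Mackey/Green computation for $\res^G_S W$ accomplishes. Everything else is direct bookkeeping with vertices, and no input from the structure theory of Dade modules beyond Definition \ref{def:DadeModule} is needed.
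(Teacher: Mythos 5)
Your proof is correct and follows the same cyclic chain (i)~$\Rightarrow$~(ii)~$\Rightarrow$~(iii)~$\Rightarrow$~(i) as the paper, invoking the same key facts (restriction preserves endo-permutation status, \cite[Lemma 12.2.6]{Bouc-BisetBook} for capped $kS$-modules, and the fact that vertices do not grow under restriction or tensor product). The only difference is one of exposition: you spell out the Mackey/Green computation showing that $\res^G_S W$ contains no trivial summand and the projection-formula argument controlling vertices of $M^*\otimes M$, both of which the paper treats as standard and leaves implicit.
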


An indecomposable summand of a capped Dade module $M$ is called a \emph{cap} of $M$.  
In Section \ref{sect:Indecomposable} we show that the cap of a Dade $kG$-module $M$ is unique 
up to isomorphism (Theorem \ref{pro:UniqueCaps}), so we will write $Cap (M)$ for the isomorphism class of the cap of $M$ without confusion.

We note that the property of being a Dade module is detected on the normalizer of a Sylow $p$-subgroup.

\begin{lemma}\label{lem:Detection} A $kG$-module $M$ is a (capped) Dade module if and only if $\res^G _{N_G(S)} M$ is a (capped) 
Dade $kN_G(S)$-module.
\end{lemma}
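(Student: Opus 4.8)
\emph{Proof plan.} Write $N=N_G(S)$, and note that $S$ is a Sylow $p$-subgroup of $N$ as well and that $\res^G_S=\res^N_S\circ\res^G_N$. The plan is to separate the two requirements in the definition of a Dade module — that $\End(M)$ be a $p$-permutation module, and that it moreover be Sylow-vertex trivial — and to detect each of them on $N$. First I would dispose of the $p$-permutation part: since $\End(\res^G_N M)\cong\res^G_N\End(M)$, applying condition (i) of Definition~\ref{def:pPermutation} once over $G$ (with Sylow $S$) and once over $N$ (with Sylow $S$) shows that $\End(M)$ is a $p$-permutation $kG$-module iff $\res^G_S\End(M)\cong\End(\res^G_S M)$ is a permutation $kS$-module iff $\End(\res^G_N M)$ is a $p$-permutation $kN$-module. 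Hence $M$ is an endo-$p$-permutation $kG$-module iff $\res^G_N M$ is an endo-$p$-permutation $kN$-module; if this fails there is nothing to prove, so I assume it holds and set $W=\End(M)$, a $p$-permutation $kG$-module with $\res^G_N W\cong\End(\res^G_N M)$.

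The key step is then the claim that a $p$-permutation $kG$-module $W$ is Sylow-vertex trivial if and only if $\res^G_N W$ is Sylow-vertex trivial. Here I would invoke Brou\'e's theory of the Brauer construction: for a $p$-subgroup $Q$ the Brauer quotient $W(Q)$ is a $p$-permutation $k[N_G(Q)/Q]$-module; an indecomposable $p$-permutation module $L$ has $L(S)\neq 0$ precisely when $L$ has vertex $S$, and $L\mapsto L(S)$ restricts to a bijection between isomorphism classes of indecomposable $p$-permutation $kG$-modules with vertex $S$ and isomorphism classes of indecomposable projective $k[N_G(S)/S]$-modules, with $k\mapsto k$. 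Since $|N_G(S)/S|$ is prime to $p$, every $k[N_G(S)/S]$-module is projective. Decomposing $W\cong\bigoplus_j L_j$ into indecomposables, $W(S)\cong\bigoplus_j L_j(S)$ picks out exactly the Sylow-vertex components, so by the bijection $W$ is Sylow-vertex trivial iff $W(S)$ is a direct sum of copies of the trivial $k[N_G(S)/S]$-module. Finally, the Brauer construction at $S$ is built from $W^S$ and the images of the transfers from proper subgroups of $S$, all of which depend only on $\res^G_S W$, and the residual action group $N_G(S)/S$ equals $N_N(S)/S$; this yields a natural identification $(\res^G_N W)(S)\cong W(S)$ of $k[N/S]$-modules. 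Hence $\res^G_N W$ is Sylow-vertex trivial iff $(\res^G_N W)(S)\cong W(S)$ is a sum of trivial modules iff $W$ is Sylow-vertex trivial, proving the claim.

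Combining the two steps gives that $M$ is a Dade $kG$-module iff $\res^G_N M$ is a Dade $kN$-module. For the parenthetical ``capped'' statement I would use Lemma~\ref{lemma:CappedEquiv}: a Dade module is capped iff its restriction to $S$ is a capped endo-permutation module; since $\res^G_S M\cong\res^N_S(\res^G_N M)$, applying this criterion to $M$ over $G$ and to the Dade module $\res^G_N M$ over $N$ shows that $M$ is capped iff $\res^G_N M$ is capped.

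The main obstacle is the Sylow-vertex-triviality claim: one needs that the Brauer construction $(-)(S)$ sees exactly the vertex-$S$ components, is compatible with restriction to $N_G(S)$, and sends the trivial module to the trivial module. An alternative would be to argue directly via the Green correspondence between indecomposable $kG$- and $kN_G(S)$-modules with vertex $S$, where the essential inputs are that the trivial module corresponds to the trivial module and that the ``error terms'' in the Green correspondence have vertices properly contained in $S$, hence are non-Sylow; the bookkeeping is then the same.
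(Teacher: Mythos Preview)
Your proof is correct. The paper's argument is shorter and phrased directly via the Green correspondence rather than the Brauer construction: it takes a Sylow-vertex indecomposable summand $U$ of $\End(M)$, notes that $\res^G_{N_G(S)}U$ must contribute a Sylow-vertex summand of $\End(\res^G_{N_G(S)}M)\cong k^n\oplus W$, hence that summand is $k$, and then invokes the Green correspondence to conclude $U\cong k$. Your Brauer-quotient formulation is the same mechanism seen from the other side (for $p$-permutation modules, $L\mapsto L(S)$ \emph{is} the Green correspondence at $S$), and you even flag this alternative yourself; what your route buys is a clean ``iff'' statement for Sylow-vertex triviality that handles both directions symmetrically, whereas the paper simply declares the forward direction trivial. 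Your explicit treatment of the ``capped'' clause via Lemma~\ref{lemma:CappedEquiv} is also more thorough than the paper, which does not spell it out.
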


\begin{proof}  The ``only if"  assertion is trivial. For the other direction, assume that $\res^G _{N_G(S)} M$ is a Dade $kN_G(S)$-module. Then $$\res ^G _S M=\res^{N_G(S)} _S \res^G _{N_G(S)} M$$ is an endo-permutation $kS$-module, and hence $M$ is an endo-$p$-permutation module. Let $U$ be 
a Sylow-vertex component of  $\End (M)$ (if there is any). Then $\res^G _{N_G (S) } U$ is a Sylow-vertex component of
$$\res^G _{N_G(S) } ( \End (M) ) \cong \End ( \res^G _{N_G(S)}  M ) \cong k^n \oplus W,$$ 
where all the components of the $N_G(S)$-module $W$ have non-Sylow vertices. This gives that $\res^G _{N_G(S) } U \cong k$. By the Green correspondence, we obtain that $U \cong k$ as a $kG$-module.  This proves that $M$ is a Dade $kG$-module.
\end{proof}

We now give a few examples of capped Dade modules.

\begin{example}
If $G=S$ is a $p$-group, then the notions of endo-permutation $kS$-module and Dade $kS$-module coincide, and the adjective ``capped'' is consistent with this identification. More generally, Lemma \ref{lem:Detection} implies that if $S$ is self-normalizing in $G$, then every endo-$p$-permutation $kG$-module is a  Dade module.
\end{example}
 
\begin{example}
Every one-dimensional character $\chi:G \to k$ is a capped Dade $kG$-module, as $\End(\chi)\cong\chi^\ast\otimes\chi\cong k$. More generally, if $M$ is an indecomposable $kG$-module whose Green correspondent is a one-dimensional $kN_G(S)$-module, then by Lemma \ref{lem:Detection}, $M$ is a capped Dade module. The Dade modules obtained this way generate a subgroup of the Dade group denoted by $\Upsilon (G)$, which is equal to the kernel of the restriction map  from the Dade group of $G$ to the Dade group of $S$ (see Theorem \ref{thm:LassueurExact}).
\end{example}

\begin{example}  Let $X$ be a $G$-set  such that $X^S =\emptyset$, and let $\Delta (X)$ denote the kernel of the augmentation map $\varepsilon: kX\to k$ defined by $\varepsilon (x)=1$ for all $x\in X$.  Then $\Delta (X)$ is a capped Dade module (see Proposition \ref{pro:RelativeDade}). 
\end{example}
 
\begin{example} A $kG$-module $M$ is called \emph{endotrivial} if $M^* \otimes M \cong k\oplus (proj)$. An endotrivial $kG$-module is a Dade module since a projective module is easily seen to be a $p$-permutation module. In particular, if $M$ is a Sylow-trivial module in the sense of \cite{Grodal}, i.e., if $\res^G _S M \cong k \oplus (proj)$,
then $M$ is an endotrivial module, and hence also a Dade module (see \cite[Thm 2.2]{Carlson-Survey}).
\end{example}

It is immediate that the class of Dade modules is closed under taking a direct summand and tensor products, although not in general under taking direct sums (just as the direct sum of two endo-permutation $kS$-modules need not be endo-permutation). We use this to define an equivalence relation on Dade modules.

\begin{definition}\label{def:Compatible}
The Dade $kG$-modules $M$ and $N$ are \emph{compatible} if $M \oplus N$ is a Dade module.
\end{definition}

There is a useful alternative way to define compatibility of Dade modules.

\begin{lemma}\label{lem:CompatibleChar}
The Dade $kG$-modules $M$ and $N$ are compatible if and only if $\Hom (M, N) \cong M^* \otimes N$ 
is  a Sylow-vertex trivial $p$-permutation module.
\end{lemma}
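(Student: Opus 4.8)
The plan is to reduce everything to the behaviour of the endomorphism module of $M \oplus N$ under its direct-sum decomposition and then invoke the closure properties recorded in Lemma \ref{lem:Closedness}. Recall that, by definition, $M \oplus N$ is a Dade module precisely when $\End(M \oplus N)$ is a Sylow-vertex trivial $p$-permutation module, and that there is a natural isomorphism of $kG$-modules
$$\End(M \oplus N) \cong \End(M) \oplus \Hom(M,N) \oplus \Hom(N,M) \oplus \End(N),$$
each summand carrying the diagonal $G$-action. Moreover $\Hom(M,N) \cong M^* \otimes N$ and, since $M$ is finite-dimensional, $\Hom(N,M) \cong N^* \otimes M \cong (M^* \otimes N)^*$.

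For the forward implication, suppose $M$ and $N$ are compatible, so that $\End(M \oplus N)$ is a Sylow-vertex trivial $p$-permutation module. As $\Hom(M,N)$ is a direct summand of $\End(M \oplus N)$, Lemma \ref{lem:Closedness} (closure under direct summands) shows that $\Hom(M,N) \cong M^* \otimes N$ is a Sylow-vertex trivial $p$-permutation module.

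For the converse, assume $M^* \otimes N$ is a Sylow-vertex trivial $p$-permutation module. First I would record that the $k$-dual of a Sylow-vertex trivial $p$-permutation module is again one of the same type: $k$-duality carries permutation modules to permutation modules (as $k[G/P]^* \cong k[G/P]$), preserves the vertices of indecomposable summands, and fixes $k$; so writing such a module as $k^n \oplus W'$ with $W'$ having only non-Sylow vertices, its dual is $k^n \oplus (W')^*$ of the same shape. Applying this to $M^* \otimes N$ gives that $\Hom(N,M) \cong (M^* \otimes N)^*$ is also Sylow-vertex trivial $p$-permutation. Since $M$ and $N$ are themselves Dade modules, $\End(M)$ and $\End(N)$ are Sylow-vertex trivial $p$-permutation as well. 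Hence $\End(M \oplus N)$ is a direct sum of four Sylow-vertex trivial $p$-permutation modules, so by Lemma \ref{lem:Closedness} (closure under direct sums) it is itself Sylow-vertex trivial $p$-permutation. Therefore $M \oplus N$ is a Dade module, i.e.\ $M$ and $N$ are compatible.

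The argument is essentially formal once the decomposition of $\End(M \oplus N)$ is in hand; the only point requiring (standard) care is the stability of the class of Sylow-vertex trivial $p$-permutation modules under $k$-duality, which is the main—and rather mild—obstacle. No deeper input is needed.
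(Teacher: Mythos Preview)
Your proof is correct and follows essentially the same approach as the paper: both use the standard decomposition of $\End(M\oplus N)$ into the four Hom-summands, the isomorphism $\Hom(N,M)\cong\Hom(M,N)^\ast$, and the observation that the dual of a Sylow-vertex trivial $p$-permutation module is again Sylow-vertex trivial, together with the closure properties of Lemma~\ref{lem:Closedness}. The paper's version is simply more terse.
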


\begin{proof}
Dade's original argument \cite[Proposition 2.3]{Dade-Endo} applies here too. Consider the standard decomposition
\[
\End(M\oplus N)\cong \End(M)\oplus\End(N)\oplus\Hom(M,N)\oplus\Hom(N,M)
\]
of $kG$-modules, together with the isomorphism $\Hom(N,M)\cong\Hom(M,N)^\ast$. As the dual of a Sylow-vertex trivial module is Sylow-vertex trivial, 
the result is immediate.
\end{proof}
 
The following is a direct consequence of Lemma \ref{lem:CompatibleChar}. 

\begin{lemma}\label{lem:EquivRelations} 
The compatibility relation is an equivalence relation on the set of capped Dade modules. We denote this relation by $M\sim N$.
\end{lemma}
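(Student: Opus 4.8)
We must show that compatibility is reflexive, symmetric, and transitive on capped Dade modules. Reflexivity and symmetry are formal: $M \sim M$ because $M \oplus M$ is a Dade module (the class of Dade modules is closed under direct sums with itself, since $\End(M \oplus M) \cong \End(M)^{\oplus 4}$ up to the $\Hom$ cross-terms, and Lemma~\ref{lem:CompatibleChar} with $N = M$ reduces the cross-terms to $\End(M)$ itself, which is Sylow-vertex trivial by hypothesis); symmetry is immediate from the symmetry of the defining condition $M \oplus N \cong N \oplus M$, or equivalently from Lemma~\ref{lem:CompatibleChar} together with the fact that $(M^* \otimes N)^* \cong M \otimes N^* \cong N^* \otimes M$ and Lemma~\ref{lem:Closedness} (the dual of a Sylow-vertex trivial module is Sylow-vertex trivial). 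So the only real content is transitivity.

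For transitivity, suppose $M \sim N$ and $N \sim L$, so that by Lemma~\ref{lem:CompatibleChar} both $M^* \otimes N$ and $N^* \otimes L$ are Sylow-vertex trivial $p$-permutation modules; we want $M^* \otimes L$ to be one as well. The plan is to use $N$ as a bridge: since $N$ is \emph{capped}, Lemma~\ref{lemma:CappedEquiv} gives that $k$ is a summand of $\End(N) = N^* \otimes N$, say $N^* \otimes N \cong k \oplus W$ with $W$ a $p$-permutation module all of whose components have non-Sylow vertices. Tensoring the isomorphism $M^* \otimes L \cong M^* \otimes k \otimes L$ with this decomposition, we get
\[
M^* \otimes L \mid M^* \otimes (N^* \otimes N) \otimes L \cong (M^* \otimes N) \otimes (N^* \otimes L),
\]
where I have rearranged the tensor factors. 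Now $(M^* \otimes N) \otimes (N^* \otimes L)$ is a tensor product of two Sylow-vertex trivial $p$-permutation modules, hence is itself Sylow-vertex trivial by Lemma~\ref{lem:Closedness}. Since $M^* \otimes L$ is a direct summand of a Sylow-vertex trivial $p$-permutation module, it is again Sylow-vertex trivial by the same lemma. By Lemma~\ref{lem:CompatibleChar} this says exactly that $M \oplus L$ is a Dade module, i.e., $M \sim L$.

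The one point that needs a line of care is the claim $M^* \otimes L \mid (M^* \otimes N) \otimes (N^* \otimes L)$: this follows because $k \mid N^* \otimes N$ implies $M^* \otimes L \cong M^* \otimes k \otimes L \mid M^* \otimes (N^* \otimes N) \otimes L$, and the latter is isomorphic to $(M^* \otimes N) \otimes (N^* \otimes L)$ after commuting the middle factors—nothing beyond associativity and commutativity of $\otimes_k$ over $kG$ with diagonal action. I expect no genuine obstacle here; the main thing to get right is simply invoking cappedness of $N$ at the correct moment, since without it $\End(N)$ need not contain a trivial summand and the bridging argument collapses. (This is the analogue, for finite groups, of Dade's original observation for $p$-groups, and the bookkeeping is essentially the same.)
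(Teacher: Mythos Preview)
Your proof is correct and follows essentially the same approach as the paper: for transitivity you form $T = (M^* \otimes N) \otimes (N^* \otimes L)$, observe it is Sylow-vertex trivial by Lemma~\ref{lem:Closedness}, and then use cappedness of $N$ (so that $k \mid N^* \otimes N$) to exhibit $M^* \otimes L$ as a summand of $T$. The paper's proof is the same bridging argument, just stated more tersely.
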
 

\begin{proof} The relation is reflexive by the definition of  Dade module and Lemma \ref{lem:CompatibleChar}.
It is clearly symmetric as it is defined by a direct sum. For transitivity, assume that $M\sim N$ and $N\sim L$. Then 
$$T:=  (M^* \otimes N) \otimes (N^* \otimes L)$$ is a Sylow-vertex trivial  $p$-permutation module. Since $N$ is capped, 
we have $N \otimes N^* \cong k^n \oplus W$ for some $n \geq 1$. This gives that $M^* \otimes L$ is a summand of $T$, 
hence  the result follows from the fact that a summand of a Sylow-vertex trivial module is Sylow-vertex trivial.
\end{proof}

\begin{proposition}\label{pro:DadeGroup} 
Let $D(G)$ denote the set of equivalence classes of capped Dade $kG$-modules with respect to the compatibility relation.
Then the operation $[M]+[N]:=[M\otimes N]$ defines an abelian group structure on $D(G)$.
\end{proposition}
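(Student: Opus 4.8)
The plan is to verify the group axioms directly, using the fact (Lemma \ref{lem:EquivRelations}) that compatibility is an equivalence relation on capped Dade modules, together with the closure properties of Sylow-vertex trivial $p$-permutation modules recorded in Lemma \ref{lem:Closedness} and the characterization of compatibility in Lemma \ref{lem:CompatibleChar}. First I would check that the operation $[M]+[N]:=[M\otimes N]$ is well defined. Here there are two things to confirm: that $M\otimes N$ is again a \emph{capped} Dade module, and that the class $[M\otimes N]$ depends only on $[M]$ and $[N]$. The first point follows because the class of Dade modules is closed under tensor product (already observed in the text, using that Sylow-vertex trivial $p$-permutation modules are closed under tensor product by Lemma \ref{lem:Closedness}), and cappedness of $M\otimes N$ follows from Lemma \ref{lemma:CappedEquiv}: since $k\mid\End(M)$ and $k\mid\End(N)$, we get $k\mid\End(M)\otimes\End(N)\cong\End(M\otimes N)$. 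For well-definedness, suppose $M\sim M'$ and $N\sim N'$; I would show $M\otimes N\sim M'\otimes N'$ by checking $(M\otimes N)^*\otimes(M'\otimes N')\cong(M^*\otimes M')\otimes(N^*\otimes N')$ is Sylow-vertex trivial, which is immediate from Lemma \ref{lem:CompatibleChar} applied to each tensor factor and the tensor-closure in Lemma \ref{lem:Closedness}.

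Next I would verify associativity and commutativity, both of which are inherited at once from the associativity and commutativity of the tensor product of $kG$-modules, so these require only a sentence. For the identity element, the class $[k]$ works: $k$ is a capped Dade module since $\End(k)\cong k$, and $[M]+[k]=[M\otimes k]=[M]$ on the nose. The only genuinely substantive axiom is the existence of inverses. Here I would claim that $[M^*]$ is inverse to $[M]$. One must first note $M^*$ is again a capped Dade module: $\End(M^*)\cong(\End M)^*$ is Sylow-vertex trivial because the dual of a Sylow-vertex trivial $p$-permutation module is again such (the dual of an indecomposable trivial-source module with vertex $Q$ is an indecomposable trivial-source module with vertex $Q$), and $k\mid\End(M^*)$ because $k^*\cong k$. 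Then $[M]+[M^*]=[M\otimes M^*]=[\End(M)]$, and I must show $[\End(M)]=[k]$, i.e. that $\End(M)$ is compatible with $k$ in the sense that $\End(M)\oplus k$ is a Dade module. But $\End(M)\cong k^n\oplus W$ with $W$ having only non-Sylow vertices and $n\geq1$ (cappedness), so $\End(M)\oplus k\cong k^{n+1}\oplus W$ is visibly Sylow-vertex trivial, hence its own endomorphism ring is Sylow-vertex trivial by Lemma \ref{lem:Closedness}; thus $\End(M)\oplus k$ is a Dade module and $\End(M)\sim k$.

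I expect the main obstacle — to the extent there is one — to be marshalling the bookkeeping around cappedness: one must be careful that at each step the modules in play remain capped (so that they represent classes in $D(G)$ at all), and the cleanest device for this is to reduce every cappedness check to the criterion $k\mid\End(-)$ from Lemma \ref{lemma:CappedEquiv}\ref{lemma:CappedEquiv_iii}, which behaves well under $\otimes$ and $(-)^*$. None of the steps involves a hard computation; the content is entirely in correctly combining the closure lemmas. Once inverses are in hand, abelianness is automatic and the proposition follows.
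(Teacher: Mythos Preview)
Your proposal is correct and follows essentially the same route as the paper's proof, which is a terse one-paragraph verification of the group axioms (closure and cappedness of $M\otimes N$, well-definedness via compatibility, identity $[k]$, inverse $[M^*]$). You simply spell out in detail what the paper leaves as ``clear'' or ``easy,'' and your bookkeeping around cappedness via Lemma~\ref{lemma:CappedEquiv}\ref{lemma:CappedEquiv_iii} is exactly the right device; the one cosmetic point is that for $[M]+[M^*]=[k]$ you could shortcut to Lemma~\ref{lem:CompatibleChar} directly (checking $k^*\otimes\End(M)\cong\End(M)$ is Sylow-vertex trivial) rather than going through the definition of compatibility via direct sums.
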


\begin{proof} It is clear that the tensor product of two capped Dade modules is a capped Dade module. For well-definedness 
we need to show that if $M\sim M'$ and $N\sim N'$, then $M \otimes N \sim M'\otimes N'$, which follows easily from the definition of compatibility. 
So the operation is well-defined and it is obviously commutative. The zero element is the equivalence class $[k]$ of the trivial 
module. The inverse of $[M]$ is $[M^*]$. 
\end{proof}

When we want to emphasize the field $k$, we write $D_k (G)$ for the Dade group. In the next section we show that 
the Dade group defined in Proposition \ref{pro:DadeGroup} is isomorphic to the Dade group defined by Lassueur  
(see Proposition \ref{pro:IsomDade}).


\section{Indecomposable Dade modules and Lassueur's Dade group}\label{sect:Indecomposable}
 
We begin with a lemma due to Benson and Carlson:

\begin{lemma}[{\cite[Theorem 2.1]{BensonCarlson-Nilpotent}}]\label{lem:Benson-Carlson}
Let $M$ and $N$ be indecomposable $kG$-modules.  The trivial module $k$ is a component of $M\otimes N$ 
if and only if $M\cong N^\ast$ and $p\nmid\dim_k M$, in which case $M\otimes N$ contains exactly one copy of $k$.
\end{lemma}

We will also need another result due to Lassueur. Recall  that for a fixed $kG$-module $V$, 
a $kG$-module $M$ is called \emph{$V$-projective} or \emph{projective relative to $V$}, if there is a $kG$-module $N$ 
such that $M \mid V \otimes N$.  

\begin{lemma}[{\cite[Lemma 5.1]{Lassueur-Dade}}]\label{lem:Indecomposable} 
Let $M$ be an indecomposable endo-$p$-permutation $kG$-module with vertex $S$. Then
\begin{enumerate}
\item\label{lem:Indecomsable_i} $\res^G _S M$ is a capped endo-permutation $kS$-module.
\item\label{lem:Indecomposable_ii} $p\nmid\dim_k M$.
\item\label{lem:Indecomposable_iii}$k$ is a summand of $\End(M)$ with multiplicity 1.
\end{enumerate}

\begin{proof} We give a self-contained proof here; more details can be found in \cite[Lemma 5.1]{Lassueur-Dade}.
Let $M$  be an indecomposable endo-$p$-permutation module with vertex $S$. Then $\res^G _S M$ is 
an endo-permutation module that has a summand with vertex $S$, hence it is capped (see \cite[Definition 12.2.5]{Bouc-BisetBook}). This proves \ref{lem:Indecomsable_i}. By Lemma \ref{lem:Benson-Carlson}, \ref{lem:Indecomposable_ii} and \ref{lem:Indecomposable_iii} are equivalent, so it remains to show that \ref{lem:Indecomposable_ii} holds.

Let $U$ be a component of $\res^G _S M$ with vertex $S$. By applying Higman's Criteria \cite[Prop 3.6.4]{Benson-Book1}, 
we conclude that $\End (U)\cong U^* \otimes U$ has the trivial module $k$ as a summand  (see  \cite[Lemma 12.2.6]{Bouc-BisetBook}). This gives that $k$ is projective 
relative to $U$, hence it is  also projective relative to $\res ^G_S M$.  Let $N$ be a $kS$-module such that 
$k \mid N \otimes  (\res^G _S M)$. Then $$\ind _S ^G k  \mid \ind _S ^G ( N \otimes \res^G _S M ) \cong (\ind ^G _S N )  \otimes M$$ 
by Frobenius Reciprocity. Since $[G:S]$  is coprime to $p$, we have $k \mid \ind _S ^G k \cong k[G/S]$.
Hence $k \mid (\ind ^G _S N) \otimes M$. This means that there is a component $V$ of $\ind_S^G N$ such that 
$k \mid V \otimes M$. Applying Lemma \ref{lem:Benson-Carlson} to $V$ and $M$, we conclude that $p \nmid \dim _k M$.  
This completes the proof.
\end{proof}
\end{lemma}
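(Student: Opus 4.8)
The plan is to prove (i) first, then to deduce the equivalence of (ii) and (iii) from Lemma~\ref{lem:Benson-Carlson}, and finally to establish (ii) directly. For (i): since $M$ is an endo-$p$-permutation module, $\res^G_S M$ is by definition an endo-permutation $kS$-module, so the only thing to check is that it is \emph{capped}, i.e.\ that it has an indecomposable summand with vertex $S$. I would argue by contradiction. If every component of $\res^G_S M$ had vertex strictly smaller than $S$, say with vertices $Q_1,\dots,Q_r < S$, then $\res^G_S M$ would be relatively projective with respect to the family $\{Q_1,\dots,Q_r\}$, hence so would $\ind^G_S\res^G_S M$. Since $[G:S]$ is prime to $p$ we have $k \mid \ind^G_S k \cong k[G/S]$, so $M \mid M\otimes k[G/S] \cong \ind^G_S\res^G_S M$ by the projection formula; thus $M$ itself would be relatively $\{Q_1,\dots,Q_r\}$-projective. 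By Green's theorem on relative projectivity with respect to a family of subgroups, a vertex of $M$ would then be $G$-conjugate to a subgroup of some $Q_i$, contradicting that $S$ is a vertex of $M$. Hence $\res^G_S M$ has a component with vertex $S$ and is capped.

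The equivalence of (ii) and (iii) is then immediate from Lemma~\ref{lem:Benson-Carlson} applied to the indecomposable modules $M$ and $M^*$: since $M \cong M^{**}$, the trivial module $k$ is a component of $\End(M)\cong M^*\otimes M$ precisely when $p\nmid\dim_k M$, and in that case with multiplicity exactly one.

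It remains to prove (ii). Using (i), choose a component $U$ of $\res^G_S M$ with vertex $S$. Then $U$ is a capped endo-permutation $kS$-module, so by Higman's criterion $k$ is a summand of $\End(U)\cong U\otimes U^*$; in particular $k$ is $U$-projective, and since $U\mid\res^G_S M$ it is $(\res^G_S M)$-projective. Pick a $kS$-module $N$ with $k\mid N\otimes\res^G_S M$. Applying $\ind^G_S$ and the projection formula $\ind^G_S(N\otimes\res^G_S M)\cong(\ind^G_S N)\otimes M$ gives $\ind^G_S k\mid(\ind^G_S N)\otimes M$. Since $[G:S]$ is invertible in $k$, the trivial module is a summand of $\ind^G_S k\cong k[G/S]$, so $k\mid(\ind^G_S N)\otimes M$. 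Decomposing $\ind^G_S N$ into indecomposables, there is an indecomposable summand $V$ of $\ind^G_S N$ with $k$ a component of $V\otimes M$. One last application of Lemma~\ref{lem:Benson-Carlson} to $V$ and $M$ yields $p\nmid\dim_k M$.

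The step I expect to require the most care is (i): the passage from ``all components of $\res^G_S M$ have small vertex'' to a contradiction with the vertex of $M$ rests on Green's theorem for families together with the fact $M\mid\ind^G_S\res^G_S M$, which itself uses $p\nmid[G:S]$. Everything downstream of (i) is a clean chain of standard manipulations: the projection formula, the splitting of $k$ off $k[G/S]$, and two applications of the Benson--Carlson lemma.
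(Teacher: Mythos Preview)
Your proposal is correct and follows essentially the same route as the paper: part (i) via the vertex of $\res^G_S M$, the equivalence (ii)$\Leftrightarrow$(iii) by Benson--Carlson, and part (ii) via Higman's criterion, relative projectivity of $k$ with respect to $\res^G_S M$, the projection formula, and a final application of Benson--Carlson. The only difference is in (i): the paper simply asserts that $\res^G_S M$ has a Sylow-vertex summand (citing standard vertex--source theory in Bouc's book), whereas you spell out a contradiction argument via $M\mid\ind^G_S\res^G_S M$ and Green's theorem; both are valid and amount to the same underlying fact.
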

 
It is interesting to ask whether every indecomposable endo-$p$-permutation module is a Dade module. 
The following example shows that the answer is ``no" in general.
 
\begin{example} Let $G=D_8$ be the dihedral group of order $8$ and set $Z=Z(G)$. Let $p$ be an odd prime. Then $S=1$ and every $kG$-module is semisimple. Let $M$ be the (unique) $2$-dimensional irreducible $kG$-module. Then,
$M$ is an endo-$p$-permutation module because its restriction to $S$ is a direct sum of trivial modules. However $M$ is not a Dade module because $\End(M) \cong k[G/Z]$ has a nontrivial Sylow-vertex summand.

More generally, if $G$ is a finite group such that $S\trianglelefteq G$ with $G/S$ nonabelian, we can take $M$ to be an irreducible $G/S$-representation of degree at least 2. Viewing $M$ as a $G$-module by inflation, we have by construction that the $S$-action on $M$ is trivial, and hence $M$ is an endo-$p$-permutation $kG$-module.  We similarly have that $S$ acts trivially on $\End(M)$, so that all components of the endomorphism module have Sylow vertices.  Schur's Lemma implies that $\End(M)$ contains a unique trivial summand, but as $\dim(\End(M))>1$ there must be a second nontrivial Sylow-vertex component.  Thus $M$ is not a Dade module. 
\end{example}

We now prove the uniqueness of the cap of an endo-permutation $kS$-module by adapting Dade's original argument from \cite[Theorem 3.8]{Dade-Endo} to our situation.  

\begin{proposition}\label{pro:UniqueCaps}
Let $M$ be a capped Dade $kG$-module. If $U$ and $V$ are two Sylow-vertex components of $M$, then $U\cong V$. 
\end{proposition}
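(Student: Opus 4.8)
The statement asserts that the Sylow-vertex component of a capped Dade $kG$-module is unique up to isomorphism, which mirrors Dade's uniqueness-of-the-cap result \cite[Theorem 3.8]{Dade-Endo}. The plan is to argue by contradiction, assuming $U$ and $V$ are two Sylow-vertex components of $M$; since $M$ is a Dade module, by Lemma \ref{lem:Closedness} all its summands that have Sylow vertex are themselves capped Dade modules (indecomposable ones), so by Lemma \ref{lem:Indecomposable}\,\ref{lem:Indecomposable_ii}--\ref{lem:Indecomposable_iii} we have $p \nmid \dim_k U$, $p \nmid \dim_k V$, and each of $\End(U)$, $\End(V)$ contains exactly one copy of $k$. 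The key computation is to examine $\Hom(U,V) \cong U^\ast \otimes V$ inside $\End(M) \cong M^\ast \otimes M$. Since $U$ and $V$ are direct summands of $M$, the module $U^\ast \otimes V$ is a direct summand of $\End(M)$, hence it is a Sylow-vertex trivial $p$-permutation module. The first step is therefore to determine when $k$ is a component of $U^\ast \otimes V$: by Lemma \ref{lem:Benson-Carlson}, this happens if and only if $U \cong V$, in which case there is exactly one copy. So it suffices to show that $k$ \emph{must} occur in $U^\ast \otimes V$.

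\textbf{Key steps.} First I would observe that $U^\ast \otimes V$, being a summand of the Sylow-vertex trivial module $\End(M)$, has the form $k^m \oplus W'$ with $W'$ a $p$-permutation module all of whose components have non-Sylow vertices; the goal is to show $m \geq 1$. To detect the copy of $k$, restrict to the Sylow subgroup: $\res^G_S(U^\ast \otimes V) \cong (\res^G_S U)^\ast \otimes (\res^G_S V) \cong \Hom_{kS}(\res^G_S U, \res^G_S V)$. By Lemma \ref{lem:Indecomposable}\,\ref{lem:Indecomsable_i}, both $\res^G_S U$ and $\res^G_S V$ are capped endo-permutation $kS$-modules, and by Lemma \ref{lemma:CappedEquiv} they are summands of the capped endo-permutation module $\res^G_S M$, which has a well-defined cap in the sense of Dade. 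The crucial point — and this is essentially Dade's argument transplanted — is that the cap of a capped endo-permutation $kS$-module is unique, so the (unique) indecomposable caps $U_0 \mid \res^G_S U$ and $V_0 \mid \res^G_S V$ with vertex $S$ are isomorphic. Then $U_0^\ast \otimes V_0 \cong U_0^\ast \otimes U_0$ contains $k$ by Higman's criterion \cite[Lemma 12.2.6]{Bouc-BisetBook}, so $k \mid \res^G_S(U^\ast \otimes V)$, forcing $m \geq 1$ in the decomposition above (since the non-Sylow-vertex part $W'$ cannot contribute a trivial $kS$-summand of vertex $S$ — more carefully, $k \mid \res^G_S W'$ would require a component of $W'$ with vertex $S$). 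Hence $k \mid U^\ast \otimes V$, and by Lemma \ref{lem:Benson-Carlson} this gives $U \cong V$.

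\textbf{Alternative, self-contained route.} Rather than invoking uniqueness of caps over $S$ (which is exactly the $p$-group case of the proposition and may feel circular in spirit), I would prefer to argue directly with the structure of $M$ over $G$. Write $M \cong U \oplus V \oplus M'$ (allowing $U$, $V$ to appear with multiplicity, or $U \cong V$ trivially). Then $\End(M)$ contains $\End(U) \oplus \End(V) \oplus \Hom(U,V) \oplus \Hom(V,U)$ as a summand. Since $\End(U)$ and $\End(V)$ each contain exactly one copy of $k$ (Lemma \ref{lem:Indecomposable}), while $\Hom(V,U) \cong \Hom(U,V)^\ast$, the number of trivial summands in this piece of $\End(M)$ is $2 + 2c$ where $c$ is the multiplicity of $k$ in $\Hom(U,V)$, which is $0$ or $1$ by Lemma \ref{lem:Benson-Carlson}. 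On the other hand, one knows (this needs a short argument, or a forward reference) that the total multiplicity of $k$ in $\End(M)$ equals the number of distinct Sylow-vertex components of $M$ — this is the key structural fact, and is where the real content lies. Comparing counts forces $c = 1$, hence $U \cong V$. The main obstacle is establishing this last structural fact cleanly without circularity; the cleanest path is the restriction-to-$S$ argument of the previous paragraph, using that for endo-permutation $kS$-modules the cap is unique by Dade, which is legitimate to cite. So in the write-up I would follow the restriction approach: it is short, uses only results already available, and localizes all the delicate bookkeeping to the (known) $p$-group case.
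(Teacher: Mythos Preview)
Your restriction-to-$S$ route is correct, but the paper takes a different and more self-contained path. Rather than descending to $S$ and invoking Dade's classical uniqueness-of-caps theorem for $p$-groups as a black box, the paper runs Dade's vertex argument directly at the level of $G$: from $k\mid\End(U)$ (Lemma \ref{lem:Indecomposable}) one gets $V\mid U\otimes(U^\ast\otimes V)$, and since the vertices of components of a tensor product lie in the vertices of the tensor factors, the Sylow-vertex component $V$ can only appear if $U^\ast\otimes V$ already has a Sylow-vertex component. That component, being a summand of the Sylow-vertex trivial module $\End(M)$, must be $k$, and Lemma \ref{lem:Benson-Carlson} finishes as you say.

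The difference is mostly one of packaging. Your approach outsources the hard step to the $p$-group case (legitimately, since that is a known theorem), at the cost of the extra bookkeeping that $\res^G_S W'$ has no trivial $kS$-summand when $W'$ has only non-Sylow vertices. The paper's approach avoids restriction entirely and needs only the standard fact about vertices of tensor products; it is shorter and makes the proposition logically independent of the $p$-group version rather than a corollary of it. Your ``alternative route'' via counting copies of $k$ is, as you suspected, circular without an independent determination of the multiplicity of $k$ in $\End(M)$, so you were right to abandon it.
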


\begin{proof}  Let $U$ and $V$ be two Sylow-vertex components of $M$. Consider the $kG$-module $\Hom(U,V)\cong  U^* \otimes V$, which is a direct summand of $\End(M)$.  If we can show that $\Hom(U,V)$ contains a Sylow-vertex component, then this component will also be a component of $\End(M)$, and thus isomorphic to the trivial $kG$-module $k$ by the assumption that $\End(M)$ is Sylow-vertex trivial.  Lemma \ref{lem:Benson-Carlson} will then imply that  $U^* \cong V^*$, and hence $U \cong V $, as desired.

Consider $U\otimes U^* \otimes V \cong \End(U) \otimes V$.  Since $U$ has vertex $S$, by Lemma \ref{lem:Indecomposable} $\End(U)$ has the trivial module $k$ as a component.  Thus $V$ is a Sylow-vertex component of $U \otimes (U^* \otimes V)$.  As the vertices of the components of a tensor product of indecomposable $kG$-modules are contained in the intersection of the vertices of the modules being tensored, it follows that $U^* \otimes V $ has a Sylow-vertex component, and the result is proved.
\end{proof}

 As a consequence, every capped Dade $kG$-module $M$ has a unique (up to isomorphism) indecomposable Sylow-vertex summand. As mentioned before, we call this summand the \emph{cap} of $M$ and denote it by $Cap(M)$.  We may then define the \emph{Dade group} of $G$, written $D(G)$, to be the the set of isomorphism classes of indecomposable capped Dade $kG$-modules with abelian group structure
\[
[M]+[N]:=[Cap(M\otimes N)].
\]
Clearly the identity for this operation is $[k]$, and the inverse of $[M]$ is $[M^\ast]$. This definition gives the same Dade group as that defined in Theorem \ref{thm:DadeGroup} because of the following lemma:  

\begin{lemma}\label{lem:CapEquiv}\
\begin{enumerate}
\item\label{lem:CapEquiv_i} Let $U$ be a Sylow-vertex component of the Dade $kG$-module $M$. Then $U$ is a capped Dade module and $U \sim M$.
\item\label{lem:CapEquiv_ii} Let $U$ and $V$ be two capped indecomposable Dade $kG$-modules.  Then $U \sim V$ if and only if $U \cong V$.
\end{enumerate}
\end{lemma}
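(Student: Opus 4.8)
The plan is to prove both parts by reduction to the behavior of $\End$ and $\Hom$ modules, using Lemma \ref{lem:Benson-Carlson}, Lemma \ref{lem:Indecomposable}, Lemma \ref{lem:CompatibleChar}, and Proposition \ref{pro:UniqueCaps} as the main tools. The two parts are linked: once part \ref{lem:CapEquiv_i} is established, part \ref{lem:CapEquiv_ii} follows by combining it with the uniqueness of caps.

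For part \ref{lem:CapEquiv_i}, let $U$ be a Sylow-vertex component of the Dade module $M$. First I would observe that $U$ is automatically a Dade module, since the class of Dade modules is closed under direct summands, and it is capped because it has vertex $S$. To show $U \sim M$, by Lemma \ref{lem:CompatibleChar} it suffices to check that $\Hom(U,M) \cong U^* \otimes M$ is Sylow-vertex trivial. Here the key point is that $U^* \otimes M$ is a direct summand of $U^* \otimes (U \oplus M')$ — no wait, more directly: $U^* \otimes M$ is a summand of $M^* \otimes M \otimes (\text{something})$? Let me instead argue as follows. Since $U \mid M$, we have $U^* \otimes M \mid M^* \otimes M = \End(M)$, and $\End(M)$ is Sylow-vertex trivial by hypothesis; as a summand of a Sylow-vertex trivial module is Sylow-vertex trivial (Lemma \ref{lem:Closedness}), we conclude $U^* \otimes M$ is Sylow-vertex trivial, hence $U \sim M$ by Lemma \ref{lem:CompatibleChar}. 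This also re-confirms $U$ is a Dade module: taking $M$ in place of the second argument, $\End(U) = U^* \otimes U \mid \End(M)$ is Sylow-vertex trivial.

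For part \ref{lem:CapEquiv_ii}, the direction $U \cong V \implies U \sim V$ is immediate since $\sim$ is reflexive. For the converse, suppose $U \sim V$ with $U, V$ capped indecomposable Dade modules. By Lemma \ref{lem:CompatibleChar}, $\Hom(U,V) \cong U^* \otimes V$ is Sylow-vertex trivial. I would then argue exactly as in the proof of Proposition \ref{pro:UniqueCaps}: since $U$ has vertex $S$ and is an indecomposable endo-$p$-permutation module, Lemma \ref{lem:Indecomposable} gives that $\End(U)$ has $k$ as a component, so $V$ is a Sylow-vertex component of $U \otimes (U^* \otimes V) \cong \End(U) \otimes V$; since vertices of components of a tensor product lie in the intersection of the vertices of the factors, $U^* \otimes V$ must have a Sylow-vertex component $W$. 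As $U^* \otimes V$ is Sylow-vertex trivial, $W \cong k$, so $k \mid U^* \otimes V$, and Lemma \ref{lem:Benson-Carlson} forces $U^* \cong V^*$, hence $U \cong V$. The main obstacle, if any, is purely bookkeeping: making sure that "summand of a tensor product has vertex inside the intersection of the vertices" is cited cleanly (it is the same fact used in Proposition \ref{pro:UniqueCaps}), and that the divisibility relations among $\End$ and $\Hom$ modules are set up so that Lemma \ref{lem:Closedness} applies. No genuinely new idea beyond those already deployed in Proposition \ref{pro:UniqueCaps} should be required.
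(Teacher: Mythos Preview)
Your proof is correct. Part \ref{lem:CapEquiv_i} matches the paper's argument exactly: $U^*\otimes U$ and $U^*\otimes M$ are summands of $\End(M)$, hence Sylow-vertex trivial, and Lemma \ref{lem:CompatibleChar} finishes.

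For part \ref{lem:CapEquiv_ii} there is a small difference in packaging. You start from the characterization $U^*\otimes V$ Sylow-vertex trivial and then rerun the vertex-intersection argument of Proposition \ref{pro:UniqueCaps} to force $k\mid U^*\otimes V$. The paper instead goes back to the \emph{definition} of compatibility: $U\sim V$ means $U\oplus V$ is itself a Dade module, and then $U$ and $V$ are two Sylow-vertex components of this single Dade module, so Proposition \ref{pro:UniqueCaps} applies directly as a black box to give $U\cong V$. Your route is perfectly valid but duplicates work already encapsulated in Proposition \ref{pro:UniqueCaps}; the paper's route is a one-line application of that proposition.
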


\begin{proof}\
\begin{enumerate}
\item Since $U^*  \otimes U$ is a summand of $M^* \otimes M$, we conclude that $U$ is a capped Dade module.  Note that $U^* \otimes M$ is a summand of $M^* \otimes M$, hence $U^* \otimes M$ is a Sylow-vertex trivial $p$-permutation module. Hence by Lemma \ref{lem:CompatibleChar}, $U$ is compatible with $M$.
\item If $U$ and $V$ are compatible, then $U\oplus V$ is a Dade $kG$-module.  Applying Proposition \ref{pro:UniqueCaps}, we obtain that $U \cong V$.
For the converse, observe that if $U \cong V$, then $U^* \otimes V \cong U^* \otimes U$ is a Sylow-vertex trivial $p$-permutation module. Hence $U \sim V$ by Lemma \ref{lem:CompatibleChar}.
\end{enumerate}
\end{proof}

The isomorphism between these two definitions of the Dade group is given by 
\[
[M] \mapsto [Cap(M)], 
\]
where the brackets on the left indicate the equivalence class of the capped Dade module, while those  
on the right refer to the isomorphism class of the capped indecomposable Dade module.
 
We now discuss Lassueur's definition of the Dade  group of a finite group, where the subject 
is approached using relative endotrivial modules (see \cite[Cor.Def. 5.5]{Lassueur-Dade}). 
If $V$ is a $kG$-module such that the trivial module $k$ is  $V$-projective, 
then all $kG$-modules are $V$-projective. It is therefore most interesting to consider the case where $k$ is not $V$-projective. 
If $V$ has a summand $V_i$ such that $p \nmid \dim _k V_i$, then $k \mid \End (V_i) $, and hence $k \mid \End (V)\cong V^* \otimes V$.
Therefore if we want $k$ not to be $V$-projective, we must have $p\mid \dim _k V_i$ for every component $V_i$ of $V$ (see \cite[Prop 2.6]{Lassueur-Dade}). A $kG$-module $V$ satisfying this property is called \emph{absolutely $p$-divisible}. 

\begin{definition} Let $V$ be an absolutely $p$-divisible $kG$-module.
A $kG$-module $M$ is \emph{endotrivial relative to $V$}, or simply \emph{$V$-endotrivial}, if $\End(M)$ 
is isomorphic to $k \oplus W$ for some $V$-projective $kG$-module $W$. In this case we write 
$\End(M)\cong k \oplus (V-proj)$.
\end{definition}

Recall that a $kG$-module is called \emph{endotrivial} if $\End (M) \cong k\oplus W$ for $W$ a projective $kG$-module. Hence an endotrivial module is a $V$-endotrivial module in the case where $V=kG$. Relative endotrivial modules 
have properties similar to endotrivial modules (see \cite[Lemma 2.8]{Lassueur-Dade}). In particular, given a 
$V$-endotrivial module $M$, there is a direct sum decomposition $M \cong M_0 \oplus (V-proj)$, where 
$M_0$ is the unique indecomposable summand of $M$ that is $V$-endotrivial. This summand is called the \emph{cap} of $M$.

Two $V$-endotrivial modules $M$ and $N$ are declared equivalent if their caps $M_0$ and $N_0$ are isomorphic. 
Lassueur defines the group $T_V(G)$ of $V$-endotrivial modules as the group of equivalence classes of 
$V$-endotrivial modules, with addition given by $[M]+[N] :=[M \otimes N]$.

To define the Dade group of a finite group, Lassueur considers $V$-endotrivial modules for a special choice of permutation module  $V$.

\begin{definition}\label{def:F_G}  
Let $\cF_G$ denote the set of all non-Sylow $p$-subgroups of $G$,
and $\cF_G/G$ the $G$-conjugacy classes of subgroups in $\cF_G$. The $kG$-module $V(\cF_G)$ is defined to be the permutation module  $$V(\cF_G ):= \bigoplus _{[Q]\in \cF_G/G} k[G/Q].$$
\end{definition}

It is easy to see that $k$ is not $V(\cF_G)$-projective since $k$ has vertex $S$. In particular, $V(\cF_G)$ is absolutely $p$-divisible.
Note that a module's being $V(\cF_G)$-projective is equivalent to its not having any Sylow-vertex components. For $V (\cF_G)$-endotrivial modules Lassueur proves the following result:

\begin{proposition}[\cite{Lassueur-Dade}, Proposition 5.2]\label{pro:StronglyCapped}  
Let $M$ be an endo-$p$-permutation module. The following are equivalent:
\begin{enumerate}
\item\label{pro:StronglyCapped_i} $M$ is $V(\cF_G)$-endotrivial.
\item\label{pro:StronglyCapped_ii} $\res^G_S M$ is $V (\cF_S)$-endotrivial.
\item\label{pro:StronglyCapped_iii} $M$ has a unique indecomposable summand $M_0$ with vertex $S$. 
\item\label{pro:StronglyCapped_iv} $\End(M) \cong k\oplus W$, where $W$ is a $p$-permutation $kG$-module 
all of whose components have non-Sylow vertices.   
\end{enumerate}
\end{proposition}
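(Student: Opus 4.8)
The plan is to prove Proposition~\ref{pro:StronglyCapped} by establishing the cycle of implications $\ref{pro:StronglyCapped_i}\Rightarrow\ref{pro:StronglyCapped_iv}\Rightarrow\ref{pro:StronglyCapped_iii}\Rightarrow\ref{pro:StronglyCapped_i}$ for a finite group $G$, together with the equivalence $\ref{pro:StronglyCapped_i}\Leftrightarrow\ref{pro:StronglyCapped_ii}$ via restriction to the normalizer and Sylow subgroup. Throughout, the key input is the observation made just before the statement: a $kG$-module is $V(\cF_G)$-projective if and only if it has no Sylow-vertex components. This translates all the relative-endotriviality hypotheses into statements about vertices of components, which is exactly the language of Sylow-vertex trivial $p$-permutation modules and Dade modules developed in Section~\ref{sect:DadeModules}.

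For $\ref{pro:StronglyCapped_i}\Rightarrow\ref{pro:StronglyCapped_iv}$: assume $\End(M)\cong k\oplus W$ with $W$ being $V(\cF_G)$-projective. Since $M$ is an endo-$p$-permutation module, $\End(M)$ is a $p$-permutation module, hence so is its summand $W$; and $V(\cF_G)$-projectivity of $W$ means precisely that no component of $W$ has vertex $S$. This is condition~\ref{pro:StronglyCapped_iv}. For $\ref{pro:StronglyCapped_iv}\Rightarrow\ref{pro:StronglyCapped_iii}$: condition~\ref{pro:StronglyCapped_iv} says exactly that $M$ is a capped Dade module with $n=1$ in the defining decomposition, so by Lemma~\ref{lemma:CappedEquiv} $M$ is capped, and Proposition~\ref{pro:UniqueCaps} gives that all Sylow-vertex components of $M$ are isomorphic to a single module $M_0$. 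The point needing care is the \emph{uniqueness of the copy} (multiplicity one), not merely uniqueness up to isomorphism: if $M$ contained two copies of $M_0$, then $\End(M)$ would contain $\End(M_0)\oplus\End(M_0)$ as a summand, and by Lemma~\ref{lem:Indecomposable}\ref{lem:Indecomposable_iii} each copy of $\End(M_0)$ contributes a trivial summand $k$, forcing $k$ to appear in $\End(M)$ with multiplicity at least $2$, contradicting~\ref{pro:StronglyCapped_iv}. For $\ref{pro:StronglyCapped_iii}\Rightarrow\ref{pro:StronglyCapped_i}$: write $M\cong M_0\oplus M'$ where $M'$ has no Sylow-vertex component, i.e.\ $M'$ is $V(\cF_G)$-projective; then $\End(M)\cong\End(M_0)\oplus\Hom(M_0,M')\oplus\Hom(M',M_0)\oplus\End(M')$, where the last three summands are $V(\cF_G)$-projective (the first because $M'$ is and $V(\cF_G)$-projectivity is closed under tensoring, the others similarly using $\Hom(A,B)\cong A^*\otimes B$), while $\End(M_0)\cong k\oplus(\text{something with no Sylow vertex})$ by Lemma~\ref{lem:Indecomposable}\ref{lem:Indecomposable_iii} — and that error term is again $V(\cF_G)$-projective. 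Hence $\End(M)\cong k\oplus(V(\cF_G)\text{-proj})$, which is condition~\ref{pro:StronglyCapped_i}.

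The equivalence $\ref{pro:StronglyCapped_i}\Leftrightarrow\ref{pro:StronglyCapped_ii}$ is the detection statement. One direction is restriction: if $\End(M)\cong k\oplus W$ with $W$ having no Sylow-vertex component over $kG$, then $\res^G_S\End(M)\cong\End(\res^G_S M)\cong k\oplus\res^G_S W$; one checks that a component of $\res^G_S W$ with vertex $S$ would, by the theory of vertices and the Green correspondence / Mackey formula, force a component of $W$ with vertex containing a Sylow subgroup, hence equal to $S$, a contradiction. The converse direction is the subtler one: from $\res^G_S M$ being $V(\cF_S)$-endotrivial we must recover the same over $kG$. Here I would route through $N_G(S)$, exactly as in Lemma~\ref{lem:Detection}: first $\res^G_S M$ endo-permutation and capped forces $M$ to be a capped endo-$p$-permutation $kG$-module; then analyze the Sylow-vertex components $U$ of $\End(M)$, restrict to $N_G(S)$ where $\End(\res^G_{N_G(S)}M)$ has a unique trivial summand and all other components non-Sylow (this follows from the $kS$-statement since $S\trianglelefteq N_G(S)$ and Green correspondence between $N_G(S)$ and $S$ is transparent), conclude $\res^G_{N_G(S)}U\cong k$, and finally apply the Green correspondence between $G$ and $N_G(S)$ to deduce $U\cong k$ as a $kG$-module. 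This simultaneously shows $\End(M)$ has a \emph{unique} trivial summand and no other Sylow-vertex component, which is~\ref{pro:StronglyCapped_iv}, hence~\ref{pro:StronglyCapped_i}.

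The main obstacle I anticipate is the $\ref{pro:StronglyCapped_ii}\Rightarrow\ref{pro:StronglyCapped_i}$ step, specifically controlling multiplicities under the passage from $kS$ to $kN_G(S)$ to $kG$: one must be careful that ``unique trivial summand'' is preserved, not just ``some Sylow-vertex component becomes trivial,'' since Green correspondence is a bijection on indecomposables with vertex $S$ but one needs to track that no \emph{extra} Sylow-vertex summands of $\End(M)$ appear over $kG$ that restrict to non-Sylow modules over $kN_G(S)$ — this cannot happen because vertices can only shrink under restriction, but the bookkeeping must be done explicitly. Since the paper already proves Lemma~\ref{lem:Detection} for the (capped) Dade property and Lemma~\ref{lemma:CappedEquiv} characterizes cappedness via $k\mid\End(M)$, most of this machinery is in place, and the proof should amount to assembling these pieces together with Lemmas~\ref{lem:Benson-Carlson} and~\ref{lem:Indecomposable}; the genuinely new content over Lassueur's original is simply checking that the Sylow-vertex-trivial reformulation is equivalent to relative $V(\cF_G)$-endotriviality, which is the dictionary established in the paragraph preceding the proposition.
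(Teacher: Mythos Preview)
The paper does not supply its own proof of Proposition~\ref{pro:StronglyCapped}; the result is quoted from \cite[Prop.~5.2]{Lassueur-Dade} without argument, so there is no in-paper proof to compare your proposal against. Your reconstruction uses exactly the tools the surrounding sections develop (Lemmas~\ref{lemma:CappedEquiv}, \ref{lem:Detection}, \ref{lem:Indecomposable}, Proposition~\ref{pro:UniqueCaps}), and your implications $\ref{pro:StronglyCapped_i}\Rightarrow\ref{pro:StronglyCapped_iv}\Rightarrow\ref{pro:StronglyCapped_iii}$ together with the detection argument $\ref{pro:StronglyCapped_ii}\Rightarrow\ref{pro:StronglyCapped_i}$ via $N_G(S)$ and the Green correspondence are sound.

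There is, however, a genuine gap in your step $\ref{pro:StronglyCapped_iii}\Rightarrow\ref{pro:StronglyCapped_i}$. You assert that $\End(M_0)\cong k\oplus(\textrm{no Sylow-vertex components})$ ``by Lemma~\ref{lem:Indecomposable}\ref{lem:Indecomposable_iii}'', but that lemma only guarantees that $k$ appears in $\End(M_0)$ with multiplicity one; it does \emph{not} rule out other Sylow-vertex components. In fact the stronger claim is false in general: the paper's own example immediately following Lemma~\ref{lem:Indecomposable} (take $S\trianglelefteq G$ with $G/S$ nonabelian and $M_0$ an inflated irreducible $k[G/S]$-module of degree at least~$2$) is an indecomposable endo-$p$-permutation module with vertex $S$ whose endomorphism module has nontrivial Sylow-vertex summands. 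For this $M=M_0$, condition~\ref{pro:StronglyCapped_iii} holds trivially (the module is its own unique Sylow-vertex summand) while~\ref{pro:StronglyCapped_iv} fails. So $\ref{pro:StronglyCapped_iii}\Rightarrow\ref{pro:StronglyCapped_i}$ is not provable from the hypotheses as the proposition is transcribed here, and your gap cannot be repaired without either strengthening~\ref{pro:StronglyCapped_iii} (for instance, additionally requiring that $M_0$ itself be a Dade module, as is tacitly used later in the proof of Lemma~\ref{lem:StronglyCappedDade}) or going back to Lassueur's original formulation of the condition.
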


Lassueur calls an endo-$p$-permutation module \emph{strongly capped} if it satisfies the equivalent conditions 
of Proposition \ref{pro:StronglyCapped}. The unique indecomposable summand $M_0$ of $M$ is called the \emph{cap} of $M$. 
Similar to the $V$-endotrivial case, one defines $M \sim N$ if $M_0 \cong N_0$. Lassueur defines the Dade group 
$D(G)$ of a finite group $G$ as the group of equivalence classes of strongly capped endo-$p$-permutation modules 
with addition given by $[M]+[N]:=[M\otimes N]$. 

Lassueur's definitions are related to our notion of a Dade module by the following observation. 
 
\begin{lemma}\label{lem:StronglyCappedDade}
The endo-$p$-permutation $kG$-module $M$ is strongly capped if and only if it is a capped Dade module with a unique copy of its cap.
\end{lemma}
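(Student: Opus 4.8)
The plan is to establish the equivalence by comparing Lassueur's characterization of strongly capped modules in Proposition~\ref{pro:StronglyCapped} with our definition of a capped Dade module together with the uniqueness result of Proposition~\ref{pro:UniqueCaps}. The key observation is that condition \ref{pro:StronglyCapped_iv} of Proposition~\ref{pro:StronglyCapped} says precisely that $\End(M) \cong k \oplus W$ with $W$ a $p$-permutation module whose components all have non-Sylow vertices; that is, $\End(M)$ is a Sylow-vertex trivial $p$-permutation module with exactly $n=1$ copies of $k$ in the notation of Definition~\ref{def:DadeModule}. So the content of the lemma is the translation: ``$M$ is a Dade module with $\End(M)$ having a \emph{single} trivial summand'' $\iff$ ``$M$ is a capped Dade module whose cap occurs with multiplicity one in $M$.''

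First I would prove the forward direction. Suppose $M$ is strongly capped. By Proposition~\ref{pro:StronglyCapped}\ref{pro:StronglyCapped_iv}, $\End(M) \cong k \oplus W$ where $W$ has no Sylow-vertex components, so $M$ is a Dade module by definition, and it is capped since $k \mid \End(M)$ (Lemma~\ref{lemma:CappedEquiv}). By Proposition~\ref{pro:StronglyCapped}\ref{pro:StronglyCapped_iii}, $M$ has a unique indecomposable summand $M_0$ with vertex $S$, i.e.\ writing $M \cong M_0 \oplus M'$ where $M'$ has no Sylow-vertex components, the cap $M_0$ appears exactly once. (If it appeared twice, $M$ would have a direct summand $M_0 \oplus M_0$, contradicting uniqueness of the Sylow-vertex summand.)

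For the converse, suppose $M$ is a capped Dade module with a unique copy of its cap $U = Cap(M)$; write $M \cong U \oplus M'$ where $M'$ has no Sylow-vertex components (possible by Proposition~\ref{pro:UniqueCaps}, which guarantees all Sylow-vertex components of $M$ are isomorphic to $U$, together with the multiplicity-one hypothesis). Then $M$ is certainly an endo-$p$-permutation module and has a unique indecomposable summand with vertex $S$, so condition \ref{pro:StronglyCapped_iii} of Proposition~\ref{pro:StronglyCapped} holds; hence $M$ is strongly capped. Alternatively, and perhaps more cleanly, one verifies condition \ref{pro:StronglyCapped_iv} directly: since $M$ is a Dade module, $\End(M) \cong k^n \oplus W$ with $W$ having only non-Sylow vertices, and it remains to check $n = 1$. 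The multiplicity $n$ of $k$ in $\End(M) = M^* \otimes M$ can be computed via Lemma~\ref{lem:Benson-Carlson}: each copy of $k$ in $M^* \otimes M$ arises from a pair of indecomposable components $(A, B)$ of $(M^*, M)$ with $A \cong B^*$ and $p \nmid \dim_k B$. By Lemma~\ref{lem:Indecomposable}\ref{lem:Indecomposable_ii}, the components of $M$ with vertex $S$ have dimension prime to $p$; and components with non-Sylow vertex have dimension divisible by $p$ (a standard fact, since their restriction to $S$ is a non-projective permutation-free... actually: an indecomposable $kG$-module with vertex $Q < S$ has $p \mid \dim_k M$ when $Q$ is not Sylow, because $[S:Q]$ divides the dimension of its $kS$-restriction via Green's theory). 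Hence the only contributing pair is $(U^*, U)$, which by the multiplicity-one hypothesis occurs once and by Lemma~\ref{lem:Indecomposable}\ref{lem:Indecomposable_iii} contributes exactly one $k$. Therefore $n = 1$.

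\textbf{Main obstacle.} The delicate point is the dimension argument: ensuring that an indecomposable $p$-permutation-adjacent module (more precisely, an indecomposable component of a Dade module) with a \emph{non-Sylow} vertex has dimension divisible by $p$, so that it contributes no copy of $k$ to $\End(M)$. This is where one must invoke Green-theoretic facts about vertices and dimensions rather than pure formal manipulation; I would either cite the standard result (an indecomposable $kG$-module with vertex $Q$ has $p\text{-part of }|G:Q|$ dividing $\dim_k M$, so non-Sylow vertex forces $p \mid \dim_k M$) or, to avoid this entirely, take the first route via condition \ref{pro:StronglyCapped_iii}, which sidesteps the dimension count: uniqueness of the cap (Proposition~\ref{pro:UniqueCaps}) plus the multiplicity hypothesis is exactly condition \ref{pro:StronglyCapped_iii}, giving both directions almost immediately. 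I would present the proof along the latter, shorter line.
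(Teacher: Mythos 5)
Your proposal is correct and takes essentially the same approach as the paper: both directions are obtained by reading off the equivalent conditions of Proposition~\ref{pro:StronglyCapped}. The paper phrases both implications through condition~\ref{pro:StronglyCapped_iv} (counting copies of $k$ in $\End(M)$, using Lemma~\ref{lem:Indecomposable} in the converse), whereas your preferred route goes through condition~\ref{pro:StronglyCapped_iii}, which is arguably cleaner since it identifies ``capped Dade module with a unique copy of its cap'' with ``unique indecomposable Sylow-vertex summand'' directly and avoids the dimension-divisibility discussion entirely. Your alternative route via~\ref{pro:StronglyCapped_iv} and the claim that non-Sylow-vertex components have dimension divisible by $p$ is also sound (it follows from Green's indecomposability theorem applied to the restriction to $S$, or equivalently from Lemma~\ref{lem:Benson-Carlson} together with the fact that $k$ has vertex $S$), but you are right that it introduces an extra fact not needed for the shorter argument.
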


\begin{proof} If $M$ is strongly capped,  then by Lemma \ref{pro:StronglyCapped}\ref{pro:StronglyCapped_iv}
$M$ is a Dade module such that $\End(M)$ has a single copy of $k$. Note that if a Dade module 
has more than one copy of its cap, then $\End(M)$ has more than one copy of $k$ 
as a direct summand. This shows that $M$ must have only one copy of its cap.

For the other direction, observe that if $M$ is an indecomposable Dade module with vertex $S$ then
by Lemma \ref{lem:Indecomposable} the trivial module $k$ has multiplicity one in $\End(M)$. Hence $M$ is strongly capped.
This shows that if $M$ is a Dade module with a unique copy of its cap, then $M$ is strongly capped.
\end{proof}

As a consequence of Lemma \ref{lem:StronglyCappedDade} we can conclude the following:

\begin{proposition}\label{pro:IsomDade} The Dade group $D(G)$ of a finite group $G$ defined in 
Theorem \ref{thm:DadeGroup} is isomorphic to the Dade group defined by Lassueur in \cite[Cor.Def. 5.5]{Lassueur-Dade}.
\end{proposition}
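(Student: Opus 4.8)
The plan is to exhibit mutually inverse group homomorphisms between our Dade group $D(G)$ — realized, via the discussion following Proposition~\ref{pro:UniqueCaps}, as the set of isomorphism classes of indecomposable capped Dade $kG$-modules with operation $[M]+[N]=[Cap(M\otimes N)]$ — and Lassueur's Dade group, realized as the set of isomorphism classes of caps of strongly capped endo-$p$-permutation modules with operation $[M]+[N]=[Cap(M\otimes N)]$ (this is the form of Lassueur's group after one passes to caps, which is legitimate since equivalence there is exactly $M_0\cong N_0$). The key point is Lemma~\ref{lem:StronglyCappedDade}: an indecomposable endo-$p$-permutation module is strongly capped if and only if it is an indecomposable capped Dade module, because for an indecomposable module ``having a unique copy of its cap'' is automatic. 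So the underlying sets of the two groups are literally the same set of isomorphism classes of modules.

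First I would make this precise: define $\Phi\colon D(G)\to D_{\mathrm{Lass}}(G)$ on the indecomposable-capped-Dade-module model by sending an indecomposable capped Dade module $M$ to itself, now regarded as a strongly capped endo-$p$-permutation module. By Lemma~\ref{lem:StronglyCappedDade} this lands in the right place and is a bijection on the level of sets, with inverse sending a strongly capped module's cap $M_0$ — which by Proposition~\ref{pro:StronglyCapped}\ref{pro:StronglyCapped_iv} satisfies $\End(M_0)\cong k\oplus W$ with $W$ having only non-Sylow vertices, hence is an indecomposable capped Dade module — back to itself. Thus $\Phi$ is a well-defined bijection of sets, and it remains only to check it respects the group operations.

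Next I would verify additivity. Both groups use the rule $[M]+[N]=[Cap(M\otimes N)]$: on our side this is the definition recorded after Proposition~\ref{pro:UniqueCaps}; on Lassueur's side, his operation $[M]+[N]=[M\otimes N]$ together with the identification of a class with its cap amounts to $Cap(M\otimes N)$, and one must observe that for indecomposable capped Dade $M,N$ the tensor product $M\otimes N$ is again a capped Dade module (immediate, since $\End(M\otimes N)\cong \End(M)\otimes\End(N)$ is a tensor product of Sylow-vertex trivial $p$-permutation modules, hence Sylow-vertex trivial by Lemma~\ref{lem:Closedness}) and that its unique Sylow-vertex component, guaranteed by Proposition~\ref{pro:UniqueCaps}, is exactly the Lassueur-cap of $M\otimes N$ by Proposition~\ref{pro:StronglyCapped}. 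Since $\Phi$ is the identity on modules, $\Phi(Cap(M\otimes N))=Cap(M\otimes N)$ in both descriptions, so $\Phi([M]+[N])=\Phi([M])+\Phi([N])$. That $\Phi([k])=[k]$ is clear, so $\Phi$ is a group isomorphism.

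I do not expect any serious obstacle here: the content has already been extracted into Lemma~\ref{lem:StronglyCappedDade} and Lemma~\ref{lem:CapEquiv}, and the proof of Proposition~\ref{pro:IsomDade} is essentially the bookkeeping observation that ``indecomposable capped Dade module'' and ``strongly capped (indecomposable) endo-$p$-permutation module'' describe the same class of modules, and that both group laws are ``tensor, then take the cap.'' The only point requiring a word of care is matching the two notions of ``cap'' — our Sylow-vertex component (Proposition~\ref{pro:UniqueCaps}) versus Lassueur's unique indecomposable summand of vertex $S$ (Proposition~\ref{pro:StronglyCapped}\ref{pro:StronglyCapped_iii}) — but these coincide by definition once one knows the module in question is both a capped Dade module and strongly capped, which is exactly Lemma~\ref{lem:StronglyCappedDade}.
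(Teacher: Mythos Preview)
Your proposal is correct and takes essentially the same approach as the paper: both arguments identify the two groups by passing to caps, using that indecomposable capped Dade modules coincide with indecomposable strongly capped endo-$p$-permutation modules (Lemma~\ref{lem:StronglyCappedDade}) and that both group laws are ``tensor, then take the cap.'' The paper's proof is terser---it simply writes down the maps $D'(G)\to D(G):[M]\mapsto[M]$ and $D(G)\to D'(G):[M]\mapsto[Cap(M)]$ and cites Lemma~\ref{lem:CapEquiv} for their being mutual inverses---while you unpack more of the bookkeeping, but the substance is identical.
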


\begin{proof} This follows from Lemma \ref{lem:CapEquiv} and the discussion thereafter. If we write $D'(G)$ for Lassueur's Dade group, 
then there is a natural map $D'(G) \to D(G)$ that takes the equivalence class $[M]$ of a strongly capped endo-$p$-permutation module $M$ to its equivalence class $[M]$ in $D(G)$. In the other direction, we have $D(G) \to D'(G)$ defined by $[M] \mapsto [Cap(M)]$. It is easy to see by Lemma \ref{lem:CapEquiv} that these maps are well-defined mutual inverses. 
\end{proof} 

\begin{remark} In \cite{Lassueur-Thesis}, Lassueur also considers the possibility of defining the Dade group using endo-$p$-permutation 
modules that contain multiple copies of their caps. Such endo-$p$-permutation modules are termed \emph{weakly capped}, however an independent condition for being weakly capped  is not given. By Proposition \ref{pro:UniqueCaps}, a Dade module is a weakly capped $kG$-module, so our definition of the Dade group $D(G)$ coincides with $\widetilde D(G)$ defined on \cite[pg. 92]{Lassueur-Thesis}.
\end{remark}


\section{Tensor induction of Dade modules}\label{sect:TensorInd}

As we have shown in the previous section, the notion of a Dade module is a slight generalization of Lassueur's ``strongly capped'' endo-$p$-permutation modules:  A capped Dade module may contain multiple (isomorphic) caps, while a strongly capped endo-$p$-permutation  module by definition has a unique Sylow-vertex component.  Our relaxation  was motivated by the fact that the class of strongly capped endo-$p$-permutation modules is not closed under tensor induction (see \cite[Counterexample 6.3]{Lassueur-Dade}). We show below that in the example considered by Lassueur, tensor 
induction of a particular strongly capped endo-$p$-permutation module does yield a Dade module, even though it is no longer strongly capped. However, 
we also give a different example to show that the class of Dade modules is not generally closed under tensor induction.
 
Recall that for a subgroup $H\leq G$, the tensor-induced $kG$-module $\ten _H ^G M$ of the $kH$-module $M$ is defined 
analogously to the usual induction $\ind _H ^G M$, but with the direct sum replaced by a tensor product:
\[
\ind_H^GM:=\bigoplus_{gH\in G/H}g\cdot M\qquad\textrm{vs.}\qquad
\ten_H^GM:=\bigotimes_{gH\in G/H}g\cdot M.
\]
The $G$-action  on  $\ten_H^G M$ is induced by the embedding $G\hookrightarrow \Sigma_{G/H}\wr H$.
We refer the reader to \cite[Section 3.15]{Benson-Book1} for more details.

Let $G$ be a finite group with $S\in\Syl_p(G)$, and let $\Omega$ denote the kernel of the augmentation map $\varepsilon: kS\to k$. By Alperin's theorem on relative syzygies \cite{Alperin-Construction},  the $kS$-module
$\Omega$ is an endo-permutation module. An easy calculation shows that 
\[
\End (\Omega ) \cong \Omega ^*  \otimes \Omega  \cong k\oplus m \cdot k[S/1]
\]
as $kS$-modules, where $m=|S|-2$. Since tensor induction commutes with tensor products we have 
\[
\End (\ten _S ^G \Omega) \cong \ten _S ^G ( \End (\Omega )) \cong \ten _S ^G (k \oplus m\cdot k[S/1]).
\]
Let $X:=[S/S]+ m\cdot [S/1]$. We want to compute $\ten_S^G (kX)$. 

For an $S$-set $X$, let $\jnd_S^G(X)$ denote the $G$-set $\Map_S(G,X)$, where the $G$-action is given by the rule $g\cdot\varphi:g'\mapsto\varphi(g'g)$. This determines a functor $\jnd_S ^G :S\text{-set}\to G\text{-set}$, called \emph{multiplicative
induction} (see \cite[Sect. 3]{Yalcin-Induction}). From the definition of tensor induction, and by the fact that $kX\otimes kY\cong k[X\times Y]$, it is easy to see that
\[
\ten _S^G (kX) \cong k[\jnd _S ^G X]
\]
(see \cite[Example 12.4.10]{Bouc-BisetBook}). As $\End(\ten_S^G \Omega)\cong\ten_S^G (kX)\cong k[\jnd_S^GX]$ is a permutation $kG$-module, $\ten_S^G(\Omega)$ is endo-$p$-permutation.  To see that $\ten_S^G(\Omega)$ is a Dade $kG$-module, it remains to show that the Sylow-vertex components of its endomorphism module are trivial.

We can write $$\jnd_S^G X=\sum\limits_{[H\leq G]} n_H\cdot [G/H]$$ for some natural numbers $n_H \geq 0$. This gives the $kG$-module isomorphism
\[
\End(\ten_S^G \Omega)\cong k[\jnd_S^G X]\cong\bigoplus_{[H\leq G]} n_H\cdot k[G/H].
\]
Note that if $T$ is  a Sylow $p$-subgroup of $H$, then the summand $k[G/H]$ is relatively $T$-projective.  From this we can conclude that the only components of $k[\jnd_S^G(X)]$ that could possibly be Sylow-vertex are summands of $k[G/H]$ where $p\nmid[G:H]$. This observation allows us to prove the following:
 
\begin{lemma}\label{lem:Normal} With  notation as above, if $O_p(G)\neq 1$ then $\ten_S^G \Omega$ is a strongly capped Dade module.
\end{lemma}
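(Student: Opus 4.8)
The plan is to verify part~(iv) of Proposition~\ref{pro:StronglyCapped} for $M=\ten_S^G\Omega$. Since it has already been observed that $\End(M)\cong k[\jnd_S^G X]$ is a permutation $kG$-module, $M$ is endo-$p$-permutation, and it is enough to show that $\End(M)\cong k\oplus W$ for a $p$-permutation module $W$ all of whose components have non-Sylow vertex; this makes $M$ strongly capped, hence a capped Dade module. Recall $X=[S/S]+m\cdot[S/1]$ with $m=|S|-2$, and write $\jnd_S^G X=\sum_{[H\leq G]}n_H\cdot[G/H]$. The discussion preceding the lemma already shows that a Sylow-vertex component of $\End(M)=\bigoplus_H n_H\cdot k[G/H]$ can only occur inside a summand $k[G/H]$ with $p\nmid[G:H]$. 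So the argument reduces to two points: (a)~each $k[G/H]$ with $p\nmid[G:H]$ has exactly one Sylow-vertex component, namely the trivial module $k$; and (b)~when $O_p(G)\neq1$, exactly one $[G/H]$ with $p\nmid[G:H]$ occurs in $\jnd_S^G X$, and with multiplicity one.

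For (a): that $k\mid k[G/H]$ is immediate, since the composite of the map $k\to k[G/H]$ sending $1$ to the sum of all cosets with the augmentation $k[G/H]\to k$ is multiplication by the unit $[G:H]$. For uniqueness, assume (after replacing $H$ by a $G$-conjugate) that $S\leq H$, so $S\in\Syl_p(H)$; by Green's indecomposability theorem $\res^G_S k[G/H]\cong\bigoplus_{SgH}k[S/(S\cap{}^gH)]$ is a permutation $kS$-module whose vertex-$S$ indecomposable summands are exactly the copies of $k[S/S]=k$, and a short Sylow-counting argument inside $H$ (using that $S\in\Syl_p(H)$ and any two Sylow $p$-subgroups of $H$ are $H$-conjugate) shows that precisely one double coset $SgH$ satisfies $S\cap{}^gH=S$. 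Thus $\res^G_S k[G/H]$ contains exactly one copy of $k$; as any Sylow-vertex component $N$ of the $p$-permutation module $k[G/H]$ restricts to a permutation $kS$-module containing at least one copy of $k$, there can be only one such $N$, and by the containment $k\mid k[G/H]$ it must be $k$.

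Granting (a), the Sylow-vertex part of $\End(M)$ is a sum of $N:=\sum_{[H]\,:\,p\nmid[G:H]}n_H$ copies of $k$, and $N$ is precisely the number of $G$-orbits of $\jnd_S^G X=\Map_S(G,X)$ meeting the fixed-point set $(\jnd_S^G X)^S$, since $(G/H)^S\neq\emptyset$ iff a $G$-conjugate of $S$ lies in $H$, i.e., iff $p\nmid[G:H]$. So (b), and hence the lemma, follows once we show that $(\jnd_S^G X)^S$ is a single point. An element $\varphi\in(\jnd_S^G X)^S$ is a function $\varphi\colon G\to X$ satisfying $\varphi(sgh)=s\,\varphi(g)$ for all $s,h\in S$ and $g\in G$, and consistency of this rule on a double coset $SgS$ forces $S\cap{}^gS\subseteq\Stab_S(\varphi(g))$. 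This is exactly where the hypothesis enters: since $O_p(G)\trianglelefteq G$ we have $O_p(G)={}^gO_p(G)\subseteq{}^gS$ as well as $O_p(G)\subseteq S$, so $1\neq O_p(G)\subseteq S\cap{}^gS\subseteq\Stab_S(\varphi(g))$; but the only point-stabilizers occurring in the $S$-set $X=[S/S]+m\cdot[S/1]$ are $1$ and $S$, so $\Stab_S(\varphi(g))=S$ and $\varphi(g)$ must be the unique $S$-fixed point of $X$, for every $g$. Hence $\varphi$ is the constant map at that point, $(\jnd_S^G X)^S$ is a single point, $N=1$, and $\End(M)\cong k\oplus W$ as needed.

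The only genuinely delicate step is the dictionary between the module $\End(M)=k[\jnd_S^G X]$ and the $G$-set $\jnd_S^G X$ used in (a) and in the count of $N$; the rest is bookkeeping, and the role of $O_p(G)\neq1$ is simply to make every intersection $S\cap{}^gS$ nontrivial, which kills all the ``free-orbit'' contributions in $X$. A cleaner but more technology-heavy route to the same dictionary is to pass to Brauer quotients at $S$: for a permutation module one has $k[Y](S)=k[Y^S]$ as $k\overline N_G(S)$-modules, and for a $p$-permutation module $M$ the indecomposable summands of $M(S)$ detect the Sylow-vertex components of $M$, with the trivial $\overline N_G(S)$-module corresponding to copies of $k$; since $[N_G(S):S]$ is prime to $p$, this turns ``$M$ strongly capped'' into ``$k[(\jnd_S^G X)^S]\cong k$'', i.e., ``$(\jnd_S^G X)^S$ is a point'', which is precisely the conclusion of the computation above.
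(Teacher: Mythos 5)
Your proof is sound in its core computation and takes essentially the same route as the paper: you analyse the $G$-set $\jnd_S^G X$ and use $O_p(G)\neq 1$ to force nontrivial stabilizers, which kills the contribution of the free part $m\cdot[S/1]$ of $X$. The paper does this by applying the fixed-point formula to every subgroup $K$ of $p'$-index; you specialize to $K=S$ and compute $(\jnd_S^G X)^S$ directly as a set of maps, which is an equivalent and arguably more transparent calculation. Your verification that every $\varphi\in(\jnd_S^G X)^S$ satisfies $S\cap{}^gS\subseteq\Stab_S(\varphi(g))$ and must therefore be the constant map is correct.

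However, your intermediate claim~(a) is false as stated, and the Sylow-counting argument you sketch for it does not work. The assertion that $k[G/H]$ with $p\nmid[G:H]$ has $k$ as its \emph{only} Sylow-vertex component fails in general: the number of $S$-fixed points of $G/H$ (with $S\leq H$) is $[N_G(S):N_H(S)]$, not $1$, and distinct $S$-fixed cosets can give rise to non-isomorphic one-dimensional Sylow-vertex summands. The paper's own Example~\ref{ex:TensorCounterExample} exhibits this: for $G=\Sigma_4$, $p=3$, $H=S$ one has $p\nmid[G:S]=8$, yet $k[G/S]$ contains the alternating representation as a nontrivial Sylow-vertex component. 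So the step ``Granting~(a), the Sylow-vertex part of $\End(M)$ is a sum of $N$ copies of $k$'' is not justified.

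Fortunately the gap is easily closed by your own computation, without any appeal to~(a). The unique element $\varphi_0$ of $(\jnd_S^G X)^S$ is the constant map, and a constant map is visibly fixed by all of $G$ (since $(g'\cdot\varphi_0)(g)=\varphi_0(gg')=x_0=\varphi_0(g)$). Hence the unique $G$-orbit of $\jnd_S^G X$ with $p'$-index isotropy is $[G/G]$ itself, so $n_G=1$ and $n_H=0$ for every proper $H$ of $p'$-index. The Sylow-vertex part of $\End(M)=\bigoplus_H n_H\cdot k[G/H]$ is then literally $k[G/G]=k$, and the conclusion follows without needing~(a) at all. This is effectively what the paper's computation of $\f_K$ for all $p'$-index $K$ accomplishes in one stroke, and it is the clean way to finish.
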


\begin{proof}  For a $G$-set $Y$ and a subgroup $K \leq G$, write $\f _K (Y) $ for the order of the fixed-point set $Y^K$.  Recall that $Y$ is determined as a $G$-set by the values $\{ \f _K (Y)\}$ as $K$ ranges over the $G$-conjugacy classes of subgroups of $G$. For multiplicative induction we have the following formula  (see \cite[Sect. 3]{Yalcin-Induction}):
\begin{equation}\label{eqn:FixedPoints}
\f _K (\jnd _S ^G X) = \prod _{Kg S \in K\backslash G /S } \f _{K {}^g\cap S } (X).
\end{equation}
Since $X=[S/S]+m\cdot[S/1]$, we have $\f _{K^g\cap S} (X)=1$ whenever $K ^g\cap S \neq 1$. If $K$ is a subgroup of $G$ such that 
$p \nmid [G:K]$, then $K$ must contain a Sylow $p$-subgroup of $G$, and hence $K$ contains $O_p(G)\neq 1$. This implies that 
for every $g \in G$ we have $K^g\cap S \neq 1$, so $\f _K(\jnd_S^G(X))=1$ for every $K$ with $p'$-index. 
It follows that $n_G=1$ and $n_K=0$ for all  $K\lneq G$ with $p'$-index.  Thus 
$\End(\ten_S^G \Omega)\cong k[\jnd_S^G(X)]$ has a unique Sylow-vertex component, which is $k$.  
The result is proved.
\end{proof}

When $O_p(G)=1$, the conclusion of Lemma \ref{lem:Normal} need not hold, as Lassueur illustrates in  \cite[Counterexample 6.3]{Lassueur-Dade}.

\begin{example} Consider the group $G := C_7 \rtimes C_3$ at the prime $p=3$. In this case $S=C_3$ and $\Omega$ is the unique 2-dimensional irreducible $kS$-module. By direct computation using the fixed-point formula of Equation \ref{eqn:FixedPoints}, we can see that 
\[
\End (\ten _S ^G \Omega ) \cong k[G/G] \oplus 3 \cdot k[G/C_7] \oplus 15 \cdot k[G/C_3] \oplus 7\cdot k[G/1].
\]
Note that, as $\res_S^G([G/S])=[S/S]+2\cdot[S/1]$, we have $k[G/S]\cong k\oplus U$, where no component of $U$ is Sylow-vertex. Lassueur concludes that in this case $\ten_S^G \Omega$ is not strongly capped. However, it is also clear from the computation that $\ten_S^ G \Omega$ is a Dade module. 
\end{example}

Unfortunately, the tensor induction $\ten_H ^G M$   of a Dade module $M$ need not be a Dade module in general (even when $H$ is of coprime index), as the following example shows:

\begin{example}\label{ex:TensorCounterExample} Let $G=\Sigma_4$ be the symmetric group on 4 letters, and  take $p=3$. Then $S\in\Syl_p(G)$ is again $C_3$, the module $\Omega$ is a 2-dimensional $kS$-module, and $X\cong[S/S]+[S/1]$.
In this case we have 
$$\End (\ten _S ^G (\Omega )) \cong k[G/G] \oplus 3\cdot  k[G/S_3] \oplus 6\cdot  k[G/S] \oplus  \cdots$$
The coefficients above were computed using Equation \ref{eqn:FixedPoints} and the fixed-point values $\f _G(X)=1$, $\f _{A_4}(X)=1$, $\f _{S_3}(X)=4$, and $\f _{S}(X) =16$. Note that 
\[
\f_{A_4} ( \jnd _{S} ^G (X)= \prod _{g\in A_4\backslash S_4 /S} \f _{A_4^g \cap S} (X) = (\f _{S} (X)) ^2= 1
\]
since $\f _{S} (X)=1$. We also have 
\[
\f_{S} ( \jnd _{S} ^G (X) )= \prod _{g\in S\backslash S_4 /S} \f _{S^g\cap S} (X)= (\f _{S} (X))^2 (\f _1(X))^2=16
\]
 since $\f _{S} (X)=1$ and $\f _1(X)=4$. The values of $\f _K$ for other subgroups are computed in a similar way.

The module $k[G/S]$ contains a copy of the alternating representation.  It follows that $\End(\ten_S^G(\Omega))$ has a non-trivial Sylow-vertex component, so $\ten_S^G(\Omega)$ is not a Dade module. \qed
\end{example}

On the positive side, for $G=\Sigma _3$, we do have  $\ten _{C_3} ^{G} \Omega$ is a Dade module by Lemma \ref{lem:Normal}. In this case we have $$\End (\ten _{C_3} ^{G} \Omega )=\ten _{C_3} ^{G} (kX) = k \oplus 3k[G/C_2] \oplus k[G/1].$$ 
This is a special case of a more general observation that holds for finite groups with normal Sylow $p$-subgroup.

\begin{lemma}\label{lem:Restriction} Let $G$ be a finite group such that $S \trianglelefteq G$. Then for every indecomposable endo-permutation $kS$-module $M$ with vertex $S$, the tensor-induced module $\ten_S^G M$ is a capped Dade module.
\end{lemma}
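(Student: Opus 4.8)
The plan is to generalize the proof of Lemma~\ref{lem:Normal}, replacing $\Omega$ by an arbitrary indecomposable endo-permutation $kS$-module $M$ of vertex $S$. First I would note that since $M$ is endo-permutation, $\End(M)$ is a permutation $kS$-module, so $\End(M)\cong kX$ for some $S$-set $X$; then, as tensor induction commutes with tensor products and $\ten_S^G(kX)\cong k[\jnd_S^GX]$ (Section~\ref{sect:TensorInd}),
$$\End(\ten_S^GM)\cong\ten_S^G(\End(M))\cong\ten_S^G(kX)\cong k[\jnd_S^GX].$$
In particular $\ten_S^GM$ is endo-$p$-permutation, so it remains only to control the Sylow-vertex components of the permutation $kG$-module $k[\jnd_S^GX]$.

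The key input I would extract is that $\f_S(X)=1$. Indeed, $M$ is indecomposable of vertex $S$, hence a capped endo-permutation $kS$-module, so by Lemma~\ref{lem:Indecomposable}\ref{lem:Indecomposable_iii} the trivial module occurs in $\End(M)\cong kX$ with multiplicity exactly one; since a permutation $kS$-module decomposes uniquely into transitive ones and the multiplicity of $k=k[S/S]$ in $kX$ equals the number of singleton $S$-orbits, i.e.\ $|X^S|$, this gives $\f_S(X)=1$. Next, for any $K\leq G$ of index prime to $p$, normality of $S$ forces $S\leq K$, hence $K^g\cap S=S$ for every $g\in G$, and the fixed-point formula~(\ref{eqn:FixedPoints}) gives
$$\f_K(\jnd_S^GX)=\prod_{KgS\in K\backslash G/S}\f_{K^g\cap S}(X)=\prod_{KgS}\f_S(X)=1.$$
Writing $\jnd_S^GX=\sum_{[H\leq G]}n_H\cdot[G/H]$, this yields $n_G=\f_G(\jnd_S^GX)=1$; and for a proper $p'$-index subgroup $H$ the identity $1=\f_H(\jnd_S^GX)=\sum_{[H']}n_{H'}\,|(G/H')^H|$, in which the summand $[H']=[G]$ already contributes $1$ and every summand is non-negative, forces $n_H\,[N_G(H):H]=0$, hence $n_H=0$. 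This is exactly the deduction made in the proof of Lemma~\ref{lem:Normal}.

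It then follows, as observed in Section~\ref{sect:TensorInd}, that the only components of $k[\jnd_S^GX]=\bigoplus_{[H]}n_H\cdot k[G/H]$ which can have Sylow vertex are summands of those $k[G/H]$ with $p\nmid[G:H]$; by the previous step the only such summand is $k[G/G]=k$, occurring once. Hence $\End(\ten_S^GM)$ is Sylow-vertex trivial and contains $k$ as a summand, so $\ten_S^GM$ is a capped Dade $kG$-module (in fact a strongly capped one). I expect the one genuine obstacle to be the multiplicity-one fact $\f_S(X)=1$: this is exactly where indecomposability of $M$ enters, and once it is available everything else is the same combinatorial computation as in Lemma~\ref{lem:Normal}, powered entirely by $S\trianglelefteq G$.
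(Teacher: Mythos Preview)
Your proposal is correct and follows essentially the same route as the paper: both arguments identify $\End(M)\cong kX$ with $\f_S(X)=1$ (the paper cites Lemma~\ref{lem:Benson-Carlson} directly, you use its consequence Lemma~\ref{lem:Indecomposable}\ref{lem:Indecomposable_iii}), pass to $\End(\ten_S^G M)\cong k[\jnd_S^G X]$, and then use the fixed-point formula together with $S\trianglelefteq G$ to conclude that $\f_K(\jnd_S^G X)=1$ for all $K\geq S$, forcing the unique Sylow-vertex component to be $k$. The only cosmetic difference is that the paper computes $\f_S(\jnd_S^G X)=1$ and deduces the vanishing of the intermediate $n_K$ from that single value, whereas you compute $\f_K$ for each $p'$-index $K$ separately; the content is identical.
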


\begin{proof} Let $X$ be the $S$-set such that $\End (M)\cong kX$. By the assumption that $M$ has vertex $S$, we have $$X=[S/S]+\sum _{[P<S]} n_P \cdot [S/ P],$$ where the sum is taken over the $S$-conjugacy classes of proper subgroups of $S$. 
 (The existence of the summand $k=k[S/S]$ follows from \cite[Lemma 12.2.6]{Bouc-BisetBook}, its uniqueness from Lemma \ref{lem:Benson-Carlson}). As above we have 
$$\End (\ten _S ^G M) \cong \ten _S ^G (kX)\cong k[\jnd _S ^G X],$$ so $\ten _S^G M$ is an endo-$p$-permutation module.
To find the components of $\End( \ten_S^G M)$ with vertex $S$, we need to consider the orbits $[G/K]$ of $\jnd _S^G X$ 
for those $K$ satisfying  $K \geq S$. Since $S \trianglelefteq G$, we have 
$$ \f_S (\jnd _S ^G X)=\prod _{gS \in G/S} \f_{S} (X) =1.$$ 
Since $f_G( \jnd _S ^G X) =1$, we have $[G/G]$ is an orbit in $\jnd _S^G (X)$. Since $f_S(\jnd _S ^G X) =1$,  the $G$-set
$\jnd _S ^G X$ has no orbits $[G/K]$ with $S\leq K <G$. Hence $\ten_S^G M$ is a capped Dade module.
\end{proof}

In certain circumstances, there is a restriction homomorphism between Dade groups.  
 
\begin{lemma}\label{lem:Restrict}  Let $S$ be a Sylow $p$-subgroup of $G$, and let $H$ be a subgroup of $G$ containing $S$. 
Then the restriction of a capped Dade $kG$-module to $H$ is a capped Dade $kH$-module. 
This induces a well-defined group homomorphism $\res^G _H : D(G) \to D(H)$.
\end{lemma}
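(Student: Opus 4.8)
The plan is to reduce everything to two stability properties of restriction, exploiting that since $S \leq H \leq G$ with $S \in \Syl_p(G)$, the subgroup $S$ is also a Sylow $p$-subgroup of $H$; in particular the adjective ``Sylow-vertex'' has the same meaning for $kH$-modules as for $kG$-modules. The two facts I would isolate first are: (a) the restriction $\res^G_H W$ of a $p$-permutation $kG$-module $W$ is a $p$-permutation $kH$-module, and (b) if moreover no component of $W$ has vertex $S$, then no component of $\res^G_H W$ has vertex $S$. For (a), if $W$ is a $p$-permutation $kG$-module then $\res^G_S W$ is a permutation $kS$-module by Definition \ref{def:pPermutation}(i); since $\res^H_S\res^G_H W = \res^G_S W$ and $S \in \Syl_p(H)$, the same criterion shows $\res^G_H W$ is a $p$-permutation $kH$-module. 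For (b), it suffices to treat an indecomposable trivial-source $W$ with vertex $Q$, where necessarily $|Q|<|S|$ because $Q$ is a non-Sylow $p$-subgroup of $G$; then $W \mid \ind^G_Q k$, so by the Mackey formula $\res^G_H W$ is a summand of $\bigoplus_{HgQ \in H\backslash G/Q}\ind^H_{H\cap {}^gQ}k$, and every component of this module has a vertex subconjugate in $H$ to $H \cap {}^gQ$, hence of order at most $|Q| < |S|$; so no component has vertex $S$.

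Granting (a) and (b), the lemma follows quickly. Writing $\End(M)\cong k^n\oplus W$ with $W$ a $p$-permutation $kG$-module none of whose components has vertex $S$, and $n\geq 1$ by cappedness (Lemma \ref{lemma:CappedEquiv}), we get $\End(\res^G_H M)\cong\res^G_H\End(M)\cong k^n\oplus\res^G_H W$, and (a), (b) show $\res^G_H W$ is a $p$-permutation $kH$-module with no Sylow-vertex component; hence $\res^G_H M$ is a Dade $kH$-module, and it is capped because $n\geq 1$ (or, alternatively, because $\res^G_S(\res^G_H M)=\res^G_S M$ is a capped endo-permutation $kS$-module, again by Lemma \ref{lemma:CappedEquiv}). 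For well-definedness on equivalence classes I would observe that if $M\sim N$ then $M^*\otimes N$ is a Sylow-vertex trivial $p$-permutation $kG$-module by Lemma \ref{lem:CompatibleChar}, so the same argument applied to $M^*\otimes N$ shows $(\res^G_H M)^*\otimes\res^G_H N\cong\res^G_H(M^*\otimes N)$ is a Sylow-vertex trivial $p$-permutation $kH$-module, i.e.\ $\res^G_H M\sim\res^G_H N$. Finally, restriction commutes with tensor products and fixes the trivial module, so $\res^G_H([M]+[N])=[\res^G_H(M\otimes N)]=[\res^G_H M\otimes\res^G_H N]=\res^G_H[M]+\res^G_H[N]$ and $\res^G_H[k]=[k]$; thus the induced map $\res^G_H : D(G)\to D(H)$ is a group homomorphism.

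I do not expect a serious obstacle here: the proof is essentially formal once (a) and (b) are in hand. The single point requiring care is fact (b), the preservation of the non-Sylow-vertex condition under restriction, and this is precisely where the hypothesis $S\leq H$ enters — it forces $\Syl_p(H)\subseteq\Syl_p(G)$, so a trivial-source component can only acquire a Sylow vertex after restriction if it already had one before. Everything else (the $\End$--$\res$ compatibility, the passage to equivalence classes via Lemma \ref{lem:CompatibleChar}, and the multiplicativity check) is bookkeeping.
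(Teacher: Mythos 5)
Your proof is correct and takes essentially the same route as the paper: identify $\End(\res^G_H M)\cong\res^G_H\End(M)\cong k^n\oplus\res^G_H W$ and observe that, since $S\in\Syl_p(H)$, no component of $\res^G_H W$ can be Sylow-vertex. You have merely filled in details the paper leaves implicit, namely the Mackey-formula justification that restriction cannot enlarge vertices and the explicit verification that the map descends to equivalence classes and is additive.
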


\begin{proof} For every $kG$-module $M$, we have $$\End (\res ^G _H M) \cong \res^G _H (\End (M)) \cong \res ^G _H (k^n \oplus W)$$
for some endo-$p$-permutation $kG$-module $W$ whose components have vertices strictly smaller than $S$.  Since  $S$ is also a Sylow $p$-subgroup of $H$, all the components of $\res^G _H W$ have non-Sylow vertices, so $\res^G _H M$ is a capped Dade $kH$-module.
\end{proof} 
  
The restriction homomorphism can be also described using strongly capped endo-$p$-permutation modules (see  \cite[Lemma 6.1]{Lassueur-Dade}).
In general the restriction map $\res^G _H$  is not injective, but it is when $H$ contains $N_G(S)$. 

\begin{proposition}[Lassueur \cite{Lassueur-Dade}, Lem. 6.1] 
Let $H\leq G$ be a subgroup such that $N_G(S) \leq H$. Then the restriction map $\res^G _H : D(G) \to D(H)$ is an injection.
\end{proposition}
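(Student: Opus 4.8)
The plan is to prove injectivity of $\res^G_H \colon D(G) \to D(H)$ when $N_G(S) \leq H$ by exhibiting a left inverse, or at least enough control via Green correspondence to conclude that a capped Dade $kG$-module with trivial restriction to $H$ must itself have trivial cap. First I would reduce to the case $H = N_G(S)$: indeed, since $N_G(S) \leq H \leq G$, the composite $D(G) \maprt{\res^G_H} D(H) \maprt{\res^H_{N_G(S)}} D(N_G(S))$ is the restriction map $\res^G_{N_G(S)}$, so it suffices to show this last map is injective; injectivity of $\res^G_H$ follows formally. The point of working with $N_G(S)$ is that Green correspondence sets up a bijection between indecomposable $kG$-modules with vertex $S$ and indecomposable $kN_G(S)$-modules with vertex $S$ (the Brauer correspondent subgroup is $N_G(S)$ itself here), and this is exactly the regime in which the caps of Dade modules live.

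Next I would carry out the core argument. Suppose $M$ is a capped Dade $kG$-module with $[\res^G_{N_G(S)} M] = 0$ in $D(N_G(S))$; I want $[M] = 0$ in $D(G)$, i.e.\ $Cap(M) \cong k$. Let $U = Cap(M)$, the unique (Proposition~\ref{pro:UniqueCaps}) indecomposable Sylow-vertex component of $M$. By Lemma~\ref{lem:Detection} and its proof, $\res^G_{N_G(S)} U$ has a unique Sylow-vertex component $U'$, and by Green correspondence $U'$ is the Green correspondent of $U$; moreover $U'$ is the cap of $\res^G_{N_G(S)} M$. The hypothesis $[\res^G_{N_G(S)} M] = 0$ means $Cap(\res^G_{N_G(S)} M) \cong k$, i.e.\ $U' \cong k$. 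But the trivial module $k$ is its own Green correspondent (it has vertex $S$ and Brauer correspondent the trivial $kN_G(S)$-module, whose Green correspondent back up to $G$ is $k$ since $[G:N_G(S)]$ need not be a $p'$-number — here one uses that $k \mid \ind^G_{N_G(S)} k$ is not automatic, so instead argue: the Green correspondent of $k_{N_G(S)}$ is the unique indecomposable summand of $\ind^G_{N_G(S)} k$ with vertex $S$, which is $k_G$). Hence $U \cong k$, so $[M] = 0$. This shows $\ker(\res^G_{N_G(S)}) = 0$.

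The main obstacle I anticipate is the careful bookkeeping with Green correspondence, specifically verifying that the cap operation commutes with restriction to $N_G(S)$ in the precise sense needed — that is, $Cap(\res^G_{N_G(S)} M) = g(Cap(M))$ where $g$ denotes the Green correspondence from $G$ to $N_G(S)$. This requires knowing that (i) restriction sends Sylow-vertex components to a single Sylow-vertex component plus lower-vertex modules (which is essentially the content of the proof of Lemma~\ref{lem:Detection}), and (ii) the Green correspondent of $k_G$ is $k_{N_G(S)}$ and conversely. Once these are in hand the proof is a short diagram chase. An alternative, perhaps cleaner route, is to invoke Lassueur's description directly: the restriction map in terms of strongly capped modules sends $M$ to (the strongly capped module with cap) $g(Cap(M))$, and Green correspondence is a bijection on the relevant isomorphism classes, so it is injective on caps and hence on $D(G)$; this is the argument indicated by the citation to \cite[Lemma~6.1]{Lassueur-Dade}, and I would follow it, filling in that our $D(G)$ agrees with Lassueur's by Proposition~\ref{pro:IsomDade}.
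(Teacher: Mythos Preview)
Your proposal is correct but takes a different route from the paper. You work with caps and the Green correspondence directly: reduce to $H=N_G(S)$, then argue that $Cap(\res^G_{N_G(S)}M)$ is the Green correspondent of $Cap(M)$, so if the former is $k$ then so is the latter. The paper instead exploits its compatibility-based definition of $D(G)$ and bypasses the cap bookkeeping entirely: if $\res^G_H[M_1]=\res^G_H[M_2]$ then $\res^G_H(M_1\oplus M_2)$ is a Dade $kH$-module, hence (restricting further via Lemma~\ref{lem:Restrict}) $\res^G_{N_G(S)}(M_1\oplus M_2)$ is a Dade $kN_G(S)$-module, and then Lemma~\ref{lem:Detection} gives that $M_1\oplus M_2$ is a Dade $kG$-module, i.e.\ $[M_1]=[M_2]$. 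Your approach is closer to Lassueur's original argument with strongly capped modules and has the advantage of being explicit about what Green correspondence does to caps; the paper's approach is shorter and illustrates the payoff of its reformulation of $D(G)$, since Lemma~\ref{lem:Detection} already packages the Green correspondence step you unwind by hand.
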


\begin{proof} Suppose that $\res^G _H [M_1] = \res ^G _H [M_2]$. Then $\res ^G _H (M_1\oplus M_2) $ is a Dade $kH$-module. By Lemma \ref{lem:Restrict}, $\res^G _{N_G(S) } (M_1\oplus M_2) $ is also a Dade module. Applying Lemma \ref{lem:Detection} we obtain that $M_1 \oplus M_2$ is a Dade $kG$-module. We conclude that $[M_1]=[M_2]$.
\end{proof}

The restriction map $\res^G _H : D(G) \to D(H)$ is not defined for an arbitrary subgroup $H \leq G$.  Even though the restriction of an endo-$p$-permutation module is always an endo-$p$-permutation module, the restriction $\res^G _H M$ of a Dade $kG$-module $M$ may not be a Dade $kH$-module, due to the Sylow-vertex triviality condition.
 
\begin{example}\label{ex:Restriction}  Consider the group $G=\Sigma _3$ at the prime $p=3$. Let $H=C_2$ and $\Delta (G/H)$ denote the kernel of the augmentation map $\varepsilon: k[G/H] \to k$. By direct computation, one can see that the relative syzygy $\Delta (G/H)$ is a capped Dade module (or apply Proposition \ref{pro:RelativeDade}). Note that $\Delta (G/H)$ is an indecomposable $kG$-module because its restriction to $S$ is indecomposable. Set $N:=\res ^G _H  (\Delta (G/H)) \cong k[H/1] \cong k \oplus k(-1)$, where $k(-1) $ denotes the one-dimensional sign representation for $H$.  
Then $N$ is not a Dade $kH$-module because $\End (N)\cong \End(k \oplus k(-1) )$ has two Sylow-vertex components isomorphic to $k (-1)$. 
\end{example}
   
We end this section with a result by Lassueur on the structure of the Dade group $D(G)$. Following Lassueur's notation, let $X(N_G(P))$ denote the  group of one-dimensional representations of $N_G(S)$.  We write $\Upsilon (G)$ for the subgroup of $D(G)$ whose elements are the isomorphism classes of the $kG$-modules arising as the Green correspondents of modules in $X(N_G(S))$. It is shown in \cite[Proposition 4.1]{Lassueur-Dade} that such modules are indeed strongly capped endo-$p$-permutation modules.
 
\begin{theorem}[Thm. 7.3, \cite{Lassueur-Dade}]\label{thm:LassueurExact} Let $G$ be a finite group with Sylow $p$-subgroup $S$. Then there is an exact sequence of abelian groups 
$$ 0 \to \Upsilon (G) \maprt{} D(G) \maprt{\res ^G _S} D(S) ^{G\text{-st} } \to 0,$$
where $D(S)^{G\text{-st}}$ denotes the subgroup formed by $G$-stable elements in $D(S)$ as defined in \cite[Def. 1.4]{Urfer-Endo}.
\end{theorem}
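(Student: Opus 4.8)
The exact sequence amounts to three nontrivial claims: that $\res^G_S$ is a well-defined homomorphism $D(G)\to D(S)^{G\text{-st}}$, that $\ker(\res^G_S)=\Upsilon(G)$, and that $\res^G_S$ is surjective onto $D(S)^{G\text{-st}}$. Injectivity of the first map is automatic, since it is just the inclusion of $\Upsilon(G)$; that $\Upsilon(G)$ is actually a subgroup then falls out of its identification with $\ker(\res^G_S)$. The plan is to settle everything except surjectivity by formal arguments with Green correspondence and the results of Sections \ref{sect:DadeModules}--\ref{sect:Indecomposable}, and then to reduce the surjectivity --- indeed the whole statement --- to the case $G=N_G(S)$, which I would import from \cite[Theorem 7.3]{Lassueur-Dade}.

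First, $\res^G_S$ is a homomorphism $D(G)\to D(S)$ by Lemma \ref{lem:Restrict} (taken with $H=S$). For $[M]\in D(G)$ one has $\res^G_S M\sim\res^G_S Cap(M)$ by Lemma \ref{lem:CapEquiv}\ref{lem:CapEquiv_i}, and $Cap(M)$ is an indecomposable endo-$p$-permutation $kG$-module with vertex $S$; its source is the unique Sylow-vertex component of the endo-permutation $kS$-module $\res^G_S Cap(M)$, so $[\res^G_S M]$ is the class of that source in $D(S)$, which is $G$-stable by \cite[Theorem 1.5]{Urfer-Endo}. Thus $\res^G_S$ lands in $D(S)^{G\text{-st}}$. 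Writing $N=N_G(S)$, for $\chi\in\Hom(N,k^\times)$ the Green correspondent $f^{-1}(\chi)\in\Upsilon(G)$ satisfies $\res^G_N f^{-1}(\chi)\cong\chi\oplus Y$ with every component of $Y$ of vertex strictly inside $S$; since $\chi|_S$ is trivial ($S$ being a $p$-group and $k^\times$ having no $p$-torsion), $\res^G_S f^{-1}(\chi)\cong k\oplus\res^G_S Y$ is Sylow-vertex trivial, so $\Upsilon(G)\subseteq\ker(\res^G_S)$.

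For the reverse inclusion I would combine the injectivity of $\res^G_N:D(G)\to D(N)$ (Lassueur, \cite[Lem.~6.1]{Lassueur-Dade}, recalled above) with the normal case. If $L$ is an indecomposable Dade $kN$-module of vertex $S$ with $[\res^N_S L]=0$ in $D(S)$, then, $S$ being normal in $N$, Clifford's theorem writes $\res^N_S L$ as a sum of $N$-conjugates of a single indecomposable endo-permutation $kS$-module, each of vertex $S$, hence each a Sylow-vertex component of the capped endo-permutation module $\res^N_S L$ and so isomorphic to its cap $k$ by Proposition \ref{pro:UniqueCaps}; thus $S$ acts trivially on $L$, i.e.\ $L$ is inflated from $N/S$. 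Every component of a module inflated from $N/S$ has vertex $S$ (its restriction to $S$ is a sum of trivial modules, and the trivial $kS$-module has vertex $S$), so the Dade condition forces $\End(L)$ --- again inflated from $N/S$ --- to be a sum of copies of $k$, whence $N$ acts by scalars on $L$ and, $L$ being indecomposable, $\dim_k L=1$. Therefore $\ker(\res^N_S)=\{[\chi]:\chi\in\Hom(N,k^\times)\}=\Upsilon(N)$. Now if $\res^G_S[M]=0$, then $\res^G_N[M]\in\ker(\res^N_S)=\Upsilon(N)$, say $\res^G_N[M]=[\chi]$; since also $\res^G_N[f^{-1}(\chi)]=[\chi]$ (the restriction of the capped Dade module $f^{-1}(\chi)$ to $N$ has cap $\chi$), injectivity of $\res^G_N$ gives $[M]=[f^{-1}(\chi)]\in\Upsilon(G)$.

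For surjectivity, the same Green-correspondence device reduces to the normal case: given $[E]\in D(S)^{G\text{-st}}\subseteq D(S)^{N\text{-st}}$ (the stability conditions over $N$ being a subset of those over $G$), surjectivity for $N$ produces a Dade $kN$-module $N'$ with $\res^N_S[N']=[E]$; putting $M:=f^{-1}(Cap(N'))$, one checks from $\res^G_N M\cong Cap(N')\oplus Y$ (with $Y$ of vertex strictly inside $S$) that $\End(\res^G_N M)$ is Sylow-vertex trivial --- its summand $\End(Cap(N'))$ is so because $Cap(N')$ is a Dade module, and the two $\Hom$ cross-terms and $\End(Y)$ have all vertices strictly inside $S$ --- so by Lemma \ref{lem:Detection} $M$ is a capped Dade $kG$-module, with $\res^G_S[M]=\res^N_S[Cap(N')]=[E]$. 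The one genuinely hard input is thus the base case $G=N_G(S)$: when $S\trianglelefteq G$ and $[G:S]$ is prime to $p$, every $G/S$-stable capped endo-permutation $kS$-module must have the class of the restriction of some Dade $kG$-module. This is an extension statement for endo-permutation modules over a normal subgroup, which I would take from \cite[Theorem 7.3]{Lassueur-Dade}, where it rests on the structure theory of endo-permutation modules for groups with normal Sylow $p$-subgroup going back to Dade \cite{Dade-Endo}; the remaining ingredients above are then a translation of the rest of Lassueur's argument into the language of Dade modules developed here.
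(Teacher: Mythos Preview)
The paper does not prove this theorem; it is quoted verbatim from \cite[Theorem 7.3]{Lassueur-Dade} and used as a structural black box, so there is no argument in the paper to compare against. Your sketch is sound and, as you yourself acknowledge in the last sentence, is essentially a translation of Lassueur's own proof into the Dade-module language of Sections~\ref{sect:DadeModules}--\ref{sect:Indecomposable}: the reduction to $N=N_G(S)$ via injectivity of $\res^G_N$ and Green correspondence, the identification of $\ker(\res^N_S)$ with one-dimensional characters in the normal case, and the deferral of the extension step (surjectivity when $S\trianglelefteq G$) to the cited source are all exactly how the argument runs in \cite{Lassueur-Dade}.

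One minor terminological quibble: what you invoke as ``Clifford's theorem'' in the normal case is not literally Clifford's theorem (which concerns simple modules) but rather the consequence of Mackey decomposition that for $S\trianglelefteq N$ and $L$ indecomposable with vertex $S$ and source $U$, one has $\res^N_S L\mid\res^N_S\ind^N_S U\cong\bigoplus_{nS\in N/S}{}^nU$, whence by Krull--Schmidt every component of $\res^N_S L$ is an $N$-conjugate of $U$. The mathematics is correct; only the attribution is loose.
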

 
This is an important structural result for the Dade group of a finite group that makes computation possible in many cases. In particular,
Theorem \ref{thm:LassueurExact}  shows that $D(G)$ is a finitely generated abelian group, as both $\Upsilon (G)$ and $D(S)^{G\text{-st}}$ are finitely generated.

 
 \section{The Dade group generated by relative syzygies}\label{sect:RelativeSyzygies}

In this section, we describe an important class of Dade modules that will be the focus of the remainder of the paper.
For $G$ an arbitrary finite group and $X$ a non-empty finite $G$-set,  let $\Delta(X)$ denote the kernel of the augmentation map $\varepsilon:k X\to k$ defined by  
$$\sum\limits_{x\in X} c_x \cdot x\mapsto \sum\limits_{x\in X}c_x.$$  The $kG$-module  $\Delta(X)$ is called the \emph{relative syzygy 
of $X$}. Throughout the paper we assume all $G$-sets are non-empty and finite. 

When $G=S$ is a $p$-group, Alperin showed in \cite[Theorem 1]{Alperin-Construction} that $\Delta(X)$ is always an 
endo-permutation $kS$-module and it is capped if $|X^S| \neq 1$. For an $S$-set $X$ with $|X^S| \neq 1$, we define the element $\Omega_X:=[\Delta(X)]\in D(S)$.  When $|X^S|=1$, we declare that $\Omega_X=0$. The set of elements $\{\Omega_X\}$, as $X$ runs over all finite $S$-sets, generates a subgroup $D^{\Omega} (S)$ of the Dade group $D(S)$, called \emph{the Dade group generated by relative syzygies} or  \emph{the group of relative syzygies}. This group plays an important role in the calculation of the full Dade group for $p$-groups. 

Our goal in this section is to show that Alperin's construction works in essentially the same way to give rise to elements of $D(G)$ when $G$ is an arbitrary finite group.  We follow the arguments of \cite{Bouc-Tensor}, which actually do the vast majority of the work for us.  Some of the results in this section also appear in \cite{Lassueur-Relative} and  \cite{Lassueur-Dade} in different forms, using the language of relative endotrivial modules. We begin by recalling the definition of $X$-split sequences.

\begin{definition}(\cite[Def. 2.1.1]{Bouc-Tensor}) \label{def:X-split} 
Let $X$ be a finite $G$-set. An exact sequence of $kG$-modules $0\to L\to M\to N\to 0$ is called \emph{$X$-split} if it splits after tensoring with $k X$, i.e., $0\to L\otimes k X\to M\otimes k X\to N\otimes k X\to 0$ is a split-exact sequence of $kG$-modules.   
\end{definition}

There is also a notion of relative $X$-projectivity defined using a list of equivalent conditions.

\begin{definition}(\cite[Def. 2.2.1]{Bouc-Tensor}) \label{def:X_proj}
Let $X$ be a finite $G$-set. A $kG$-module $M$ is called \emph{relatively $X$-projective} if the following equivalent conditions are satisfied:
\begin{enumerate}
\item \label{def:X_proj_ii_a} $M$ is a direct summand of $k X\otimes N$ for some $kG$-module $N$.
\item\label{def:X_proj_ii_c}  The map $k X\otimes M\to M$ defined by $x\otimes m\mapsto m$ is a split epimorphism of $kG$-modules.
\item\label{def:X_proj_ii_d} The lifting problem
\[
\xymatrix{
&M\ar[d]^-f\ar@{..>}[dl]_-{\widetilde f}\\
L\ar[r]_\varphi&N\ar[r]&0
}
\]
has a solution whenever $\varphi\otimes\id_{k X}:L\otimes k X\to  N\otimes k X$ is a split epimorphism of $kG$-modules.
\end{enumerate}
\end{definition}

These conditions are indeed equivalent, as can be seen by using arguments similar to the standard ones given for projective $kG$-modules (see \cite[Sect. 3.6]{Benson-Book1}).   
 
We now state a relative version of Schanuel's Lemma  (\cite[Proposition 2.3.1]{Bouc-Tensor}): 

\begin{lemma}[Relative Schanuel's Lemma] \label{lem:Schanuel}
For a finite $G$-set $X$, suppose that
\[
\xymatrix@=1.5em{
0\ar[r]&L\ar[r]&M\ar[r]&N\ar[r]\ar@{=}[d]&0\\
0\ar[r]&L'\ar[r]&M'\ar[r]&N\ar[r]&0
}
\]
are two $X$-split short exact sequences of $kG$-modules, where $M$ and $M'$ are both relatively $X$-projective.  Then $M\oplus L'\cong M'\oplus L$.
\end{lemma}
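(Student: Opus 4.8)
The plan is to mimic the classical proof of Schanuel's Lemma, replacing the roles of "projective" and "surjection splitting" with their $X$-relative analogues, and using Definition \ref{def:X_proj}\ref{def:X_proj_ii_d} as the substitute for projectivity. First I would form the pullback $P$ of the two surjections $M\to N$ and $M'\to N$, fitting into a commutative diagram whose two short exact rows are $0\to L'\to P\to M\to 0$ and $0\to L\to P\to M'\to 0$ (the kernels of the two projections $P\to M$ and $P\to M'$ being canonically identified with $L'$ and $L$ respectively, via the standard pullback computation). The target isomorphism $M\oplus L'\cong M'\oplus L$ will then follow once I show that \emph{both} of these short exact sequences split.

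The key point is that each of these two sequences is $X$-split \emph{and} has relatively $X$-projective cokernel, which forces it to split: indeed, if $0\to A\to B\to C\to 0$ is $X$-split and $C$ is relatively $X$-projective, then applying Definition \ref{def:X_proj}\ref{def:X_proj_ii_d} with $f=\id_C$ and $\varphi:B\to C$ — noting that $\varphi\otimes\id_{kX}$ is a split epimorphism precisely because the original sequence is $X$-split — produces a section $C\to B$. So I must verify two things for, say, the row $0\to L'\to P\to M\to 0$: (a) it is $X$-split, and (b) $M$ is relatively $X$-projective (the latter being a hypothesis). For (a), I would tensor the whole pullback diagram with $kX$; since the functor $-\otimes kX$ is exact and preserves pullbacks, the tensored diagram is again a pullback of the two tensored surjections $M\otimes kX\to N\otimes kX$ and $M'\otimes kX\to N\otimes kX$, both of which are split epimorphisms by the $X$-splitness of the original sequences. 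A pullback of two split epimorphisms of $kG$-modules is itself a (split) epimorphism onto each factor, so $P\otimes kX\to M\otimes kX$ is split, i.e. the row is $X$-split. Symmetrically the other row $0\to L\to P\to M'\to 0$ is $X$-split with relatively $X$-projective cokernel $M'$.

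Hence both rows split, giving $P\cong M\oplus L'$ and $P\cong M'\oplus L$, so $M\oplus L'\cong M'\oplus L$, as desired. The main obstacle I anticipate is the bookkeeping around the pullback: one must check carefully that the kernel of $P\to M$ really is (isomorphic to) $L'$ and not merely some extension, and that the identification is $kG$-linear; this is the standard "$\ker(P\to M)\cong\ker(M'\to N)$" computation for pullbacks, but it has to be done cleanly since everything downstream rests on it. A minor secondary point is to confirm that "pullback of two split epimorphisms is a split epimorphism onto each factor" — this is elementary (a common section can be built from the two given sections composed with the pullback projections), but it should be stated explicitly since it is exactly what delivers the $X$-splitness of the new rows. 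Alternatively, if one prefers to avoid pullbacks, I could instead run the argument directly via the lifting property: lift $\id_M$ along $M'\to N$ using that $M$ is relatively $X$-projective (valid since $0\to L'\to M'\to N\to 0$ is $X$-split), obtaining a map $M\to M'$, and then a standard diagram chase with kernels produces the splitting — but the pullback formulation is cleaner and more symmetric, so that is the route I would write up.
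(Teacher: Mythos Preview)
Your proposal is correct and takes essentially the same approach as the paper: the paper simply states that the proof is similar to the well-known version of Schanuel's Lemma and cites \cite[Lemmas 3.9.1 and 1.5.3]{Benson-Book1}, and your pullback argument is exactly the standard proof carried over to the $X$-relative setting. Your verification that the pullback rows are $X$-split (via the observation that a section of $M'\otimes kX\to N\otimes kX$ induces a section of $P\otimes kX\to M\otimes kX$) is the one non-automatic step, and you have handled it correctly.
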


\begin{proof} The proof is similar to the proof of the well-known version of Schanuel's lemma. See \cite[Lemmas 3.9.1 and 1.5.3]{Benson-Book1}.
\end{proof}
 
We have the following technical result.

\begin{lemma}\label{lem:Decomp}
Let $X$ be a finite $G$-set and let $\Delta (X)$ denote the kernel of the augmentation map $\varepsilon: kX \to k$.
Then there is an isomorphism of $kG$-modules
\[
\End(\Delta(X))\oplus  k X \oplus kX \cong k \oplus  k[X\times X].
\]
\end{lemma}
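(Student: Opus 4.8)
The plan is to write down an explicit short exact sequence whose middle term is (a twist of) $kX$, recognize it as $X$-split with relatively $X$-projective middle term, and then apply the Relative Schanuel's Lemma to compare it with another such sequence. Concretely, first I would observe that the augmentation map $\varepsilon : kX \to k$ fits into the short exact sequence $0 \to \Delta(X) \to kX \to k \to 0$. Tensoring this with $kX$ and using $kX \otimes kX \cong k[X \times X]$ together with the fact that $kX \otimes (-)$ of an augmentation sequence splits (the section $k \to kX$, $1 \mapsto \sum_x x$ becomes available after tensoring because $kX$ is a permutation module on which we can average, or more directly because $kX \otimes \varepsilon$ is split by $\mathrm{id}_{kX} \otimes (\text{diagonal})$), I get that $0 \to \Delta(X) \to kX \to k \to 0$ is $X$-split, and that $kX$ is relatively $X$-projective (it is a summand of $kX \otimes k$, trivially).

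Next I would set up a second $X$-split sequence with the same cokernel. The natural candidate is to tensor the first sequence with $\Delta(X)$, or rather to use the sequence $0 \to kX \otimes \Delta(X) \to kX \otimes kX \to kX \to 0$... but the cleaner route is: apply $\Hom(-, \Delta(X))$ or $(-)^* \otimes (-)$ to produce $\End(\Delta(X))$. I would instead dualize: from $0 \to \Delta(X) \to kX \to k \to 0$ and its dual $0 \to k \to (kX)^* \to \Delta(X)^* \to 0$, and using that $kX$ is self-dual as a permutation module (its basis is a $G$-set), tensor the first sequence by $\Delta(X)^*$ to get $0 \to \Delta(X)^* \otimes \Delta(X) \to \Delta(X)^* \otimes kX \to \Delta(X)^* \to 0$, i.e. $0 \to \End(\Delta(X)) \to \Delta(X)^* \otimes kX \to \Delta(X)^* \to 0$. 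This is $X$-split (tensoring a sequence by $kX$-modules preserves $X$-splitness since $kX \otimes kX$ is a summand-controlled object — more carefully, one checks $X$-splitness is preserved under $\otimes$ with any module), and $\Delta(X)^* \otimes kX$ is relatively $X$-projective. Meanwhile $\Delta(X)^* \otimes kX$ can be computed: from $0 \to k \to (kX)^* \to \Delta(X)^* \to 0$ tensored with $kX$, one gets $\Delta(X)^* \otimes kX \oplus kX \cong (kX)^* \otimes kX \cong k[X\times X]$, using that the sequence $0 \to kX \to k[X \times X] \to \Delta(X)^* \otimes kX \to 0$ is split (again $X$-split with both outer terms, hence split by Relative Schanuel against the trivial sequence, or directly because $k[X\times X] \to kX$ induced by the projection $X \times X \to X$ splits the inclusion via the diagonal). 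Combining, $\End(\Delta(X)) \oplus kX$ sits in an $X$-split sequence with the relatively $X$-projective $\Delta(X)^* \otimes kX \cong k[X\times X] \ominus kX$, so I would feed the two sequences $0 \to \Delta(X) \to kX \to k \to 0$ (tensored appropriately) and $0 \to k \to (kX)^* \to \Delta(X)^* \to 0$ into Lemma \ref{lem:Schanuel} and read off $\End(\Delta(X)) \oplus kX \oplus kX \cong k \oplus k[X \times X]$ after bookkeeping the extra $kX$ summand and using self-duality of $kX$.

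The main obstacle I expect is the bookkeeping: getting the splitting data and the relative-projectivity hypotheses lined up precisely so that a single application of Relative Schanuel's Lemma produces exactly the claimed isomorphism, rather than something off by a summand of $kX$ or requiring self-duality of $\Delta(X)$ (which is false in general — only $kX$ is self-dual). The cleanest organization is probably to prove the auxiliary splitting $(kX)^* \otimes kX \cong k[X\times X] \cong kX \oplus (\Delta(X)^*\otimes kX)$ first as a standalone observation (diagonal/projection maps), then tensor $0 \to \Delta(X) \to kX \to k \to 0$ by $(kX)^*$ to get $0 \to (kX)^* \otimes \Delta(X) \to (kX)^* \otimes kX \to (kX)^* \to 0$, which is split since the original sequence is $X$-split and $(kX)^* = kX$ as sets support the splitting; this already gives $(kX)^* \otimes \Delta(X) \oplus kX \cong k[X\times X]$, and a parallel computation of $(kX)^*\otimes \Delta(X)$ versus $\End(\Delta(X))$ via the dual sequence $0 \to k \to (kX)^* \to \Delta(X)^* \to 0$ tensored by $\Delta(X)$ closes the argument. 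I would double-check the rank count at the end ($\dim \End(\Delta(X)) = (|X|-1)^2$, and $(|X|-1)^2 + 2|X| = 1 + |X|^2$) as a sanity check that no summand has been dropped.
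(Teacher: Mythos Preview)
Your proposal is correct and follows essentially the same route as the paper's proof: show the augmentation sequence is $X$-split, tensor it by $\Delta(X)^\ast$ to obtain $0 \to \End(\Delta(X)) \to kX \otimes \Delta(X)^\ast \to \Delta(X)^\ast \to 0$, apply Relative Schanuel's Lemma against the dual sequence $0 \to k \to kX \to \Delta(X)^\ast \to 0$, and then add $kX$ to both sides using the auxiliary splitting $k[X\times X] \cong kX \oplus (\Delta(X)^\ast \otimes kX)$. The ``cleanest organization'' you outline in your final paragraph is exactly how the paper arranges it, and your caution about not assuming $\Delta(X)$ is self-dual is well placed---the paper works with $\Delta(X)^\ast$ throughout for precisely that reason.
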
 

\begin{proof}  In the case that $G=S$ is a $p$-group, this result is due to Alperin \cite[Theorem 1]{Alperin-Construction}.  His proof also works for finite groups. Here we give a proof based on an argument due to Bouc \cite[Lemma 2.3.3]{Bouc-Tensor} that uses the Relative Schanuel's Lemma.

We begin with the  exact sequence
\[
0\to\Delta(X)\to k X\to k\to 0,
\]
which we claim to be $X$-split.  Indeed, after tensoring with $kX$ we have
\[
0\to\Delta(X)\otimes k X\to k X\otimes k X\to k X\to 0,
\]
where the last map is given by $x'\otimes x\mapsto x$.  Clearly $k X\to k X\otimes k X:x\mapsto x\otimes x$ defines a splitting, as desired.   
It is easy to see that the dual sequence
\[
0\to k\to k X\to\Delta(X)^\ast\to 0
\]
is $X$-split as well, using $(k X)^\ast\cong k X$.  Note that this implies that
\[
k X\otimes k X\cong(\Delta(X)^\ast\otimes k X)\oplus k X.
\]

On the other hand, starting from our original exact sequence, if we tensor with $\Delta(X)^\ast$ we obtain
\[
0\to\End(\Delta(X))\to k X\otimes\Delta(X)^\ast\to\Delta(X)^\ast\to 0.
\]
As an $X$-split exact sequence clearly remains $X$-split after tensoring with a $kG$-module, this second sequence is $X$-split.  Finally, noting that both $k X$ and $k X\otimes\Delta(X)^\ast$ are relatively $X$-projective by definition, the Relative Schanuel's Lemma gives
\[
\End(\Delta(X))\oplus k X\cong k\oplus(k X\otimes\Delta(X)^\ast).
\]
Adding $k X$ to both sides, the isomorphism $k X\otimes k X\cong(k X\otimes\Delta(X)^\ast)\oplus k X$ noted above yields
\[
\End(\Delta(X))\oplus k X\oplus k X\cong k\oplus(k X\otimes k X),
\]
from which the desired isomorphism is obtained using $k X\otimes kX \cong k[X \times X]$.  
\end{proof}

We can now prove the first main result of this section.

\begin{proposition}\label{pro:RelativeDade}
Let $X$ be a finite $G$-set such that $X^S=\emptyset$.  Then $\Delta(X)$ is a capped Dade $kG$-module.
\end{proposition}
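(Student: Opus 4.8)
The plan is to verify the two defining requirements of a capped Dade $kG$-module for $\Delta(X)$ directly from Lemma~\ref{lem:Decomp}, which expresses $\End(\Delta(X))$ as a virtual sum of permutation modules. First, since $\End(\Delta(X))$ is a direct summand of $k\oplus k[X\times X]$ — a genuine permutation $kG$-module — it is a $p$-permutation module, so $\Delta(X)$ is automatically an endo-$p$-permutation module; this handles the ``permutation-type'' part of Definition~\ref{def:DadeModule}. It remains to control the Sylow-vertex components of $\End(\Delta(X))$: we must show that they are all trivial and that at least one trivial summand occurs (this last point gives cappedness, via Lemma~\ref{lemma:CappedEquiv}\ref{lemma:CappedEquiv_iii}).

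The key step is to locate the Sylow-vertex components of $k[X\times X]$ and $kX$. Writing $kX\cong\bigoplus_{[H]}c_H\cdot k[G/H]$, the summand $k[G/H]$ is relatively $T$-projective for $T$ a Sylow $p$-subgroup of $H$, so its only possible Sylow-vertex components occur when $p\nmid[G:H]$, i.e.\ when $H$ contains a Sylow $p$-subgroup of $G$. But $c_H\neq 0$ with such an $H$ forces $X^H\neq\emptyset$, hence $X^S\neq\emptyset$ after conjugating a Sylow subgroup of $H$ into $S$ — contradicting our hypothesis. Thus $kX$ has \emph{no} Sylow-vertex components, and therefore neither does $k[X\times X]\cong kX\otimes kX$ (a tensor product of two modules with no Sylow-vertex components has no Sylow-vertex component, since vertices of components of a tensor product lie in intersections of vertices). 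Applying the same reasoning, $kX\oplus kX$ on the left-hand side of Lemma~\ref{lem:Decomp} contributes no Sylow-vertex summand either. Since the trivial module $k$ has vertex $S$ and is indecomposable, comparing Sylow-vertex components on both sides of
\[
\End(\Delta(X))\oplus kX\oplus kX\cong k\oplus k[X\times X]
\]
by Krull--Schmidt forces $\End(\Delta(X))$ to have exactly one Sylow-vertex component, namely a single copy of $k$. Hence $\End(\Delta(X))\cong k\oplus W$ with $W$ a $p$-permutation module all of whose components have non-Sylow vertices, which is precisely the condition that $\Delta(X)$ be a Dade module, and the presence of the summand $k$ makes it capped.

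The main obstacle is the bookkeeping around tensor products of non-Sylow-vertex modules: one needs the (standard but worth stating) fact that if $M_1,M_2$ are $p$-permutation $kG$-modules with no Sylow-vertex components, then neither does $M_1\otimes M_2$, which follows because every component of $M_1\otimes M_2$ has vertex contained in a $G$-conjugate of $Q_1\cap Q_2$ for vertices $Q_i$ of components $M_i$, and $|Q_1\cap Q_2|<|S|$ whenever $|Q_1|<|S|$. With that lemma in hand the argument is purely a Krull--Schmidt comparison; no deeper structure of $\Delta(X)$ is needed beyond Lemma~\ref{lem:Decomp}.
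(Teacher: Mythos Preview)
Your proposal is correct and follows essentially the same route as the paper: apply Lemma~\ref{lem:Decomp}, observe that $\End(\Delta(X))$ is a summand of a permutation module and hence $p$-permutation, and then compare Sylow-vertex components on both sides via Krull--Schmidt. The only cosmetic difference is that the paper dispatches $k[X\times X]$ by noting directly that $(X\times X)^S=X^S\times X^S=\emptyset$, so the same argument you gave for $kX$ applies verbatim; your tensor-product lemma is valid but unnecessary here.
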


\begin{proof} By Lemma \ref{lem:Decomp}, there is an isomorphism of $kG$-modules
\[
\End(\Delta(X))\oplus  k X \oplus kX \cong k \oplus  k[X\times X].
\]
This shows that $\End(\Delta(X))$ is a summand of a permutation module, and hence it is an $p$-permutation module by Definition  \ref{def:pPermutation}.
Note that since $X^S=\emptyset$,  the product $X\times X$ has no $S$-fixed-points.
This gives that the right hand side of the above isomorphism has a unique Sylow-vertex component, namely the trivial module $k$. Since $k X$ has no Sylow-vertex components, it follows that $k$ must be a summand of $\End(\Delta(X))$ and that $\End(\Delta(X))$ has no other Sylow-vertex components.  As this is precisely the definition of a capped Dade module, the result is proved.
\end{proof}

The condition that $X^S =\emptyset$  is essential for the conclusion of Proposition \ref{pro:RelativeDade}.

\begin{example}
Suppose that $S \lhd G$, so that $G/S$ is a nontrivial $p'$-group.  Assume moreover that $G/S$ is abelian.  As before, let $X(G)$ denote the group of 1-dimensional characters $\chi: G \to k^\times$. It is easy to see that
\[
k[G/S]\cong\bigoplus_{\chi \in X(G)}\chi,\qquad\textrm{so}\qquad
\Delta(G/S)\cong\bigoplus_{1\neq\chi\in X(G)}\chi,
\]
where $1$ denotes the trivial character.  It follows that
\[
\End(\Delta(G/S))\cong\bigoplus_{\substack{\chi,\psi\in X(G)\\\chi\neq1\neq\psi}}\chi^\ast\otimes\psi.
\]
In particular, so long as $|G/S|>2$, at least one of these terms is a 1-dimensional (hence Sylow-vertex) nontrivial $kG$-module, so $\Delta(G/S)$ is not a Dade module.
\end{example}
 
\begin{definition}
If $X$ is a finite $G$-set such that $X^S= \emptyset$, then we write $\Omega_X$ for the class $[\Delta(X)]\in D(G)$. 
The \emph{group of relative syzygies} of $G$ is the subgroup $D^\Omega(G)\leq D(G)$ generated by the set 
$\{ \Omega_X \}$ as $X$ runs over all $G$-sets satisfying $X^S=\emptyset$. 
\end{definition}
 
 As noted above, allowing only the relative syzygies $\Delta (X)$ with  $X^S =\emptyset$ as generators for $D^{\Omega} (G)$ is a necessary condition, however this makes it difficult to capture the elements in $\Upsilon (G)$ (cf. Theorem \ref{thm:LassueurExact}) as elements in $D^{\Omega} (G)$. For example, when the order of $G$ is coprime to $p$, then by definition $D^{\Omega} (G)=0$ even though there may be many 
one-dimensional representations giving nonzero elements of $\Upsilon (G)$.  However it is still possible to produce some elements in $\Upsilon (G)$ using relative syzygies with the condition $X^S =\emptyset$, as shown in the following example, so in general $\Upsilon (G) \cap D^{\Omega } (G) \neq 0$.

\begin{example}\label{ex:S3Example}  
Consider the group $G=\Sigma_3$ at the prime $p=3$.  We have $G=\langle\sigma:=(123),\tau:=(12)\rangle$, so that any $kG$-module is determined by a pair of compatible actions of $\sigma$ and $\tau$.  Write $S:=\langle\sigma\rangle\cong C_3$ and $H:=\langle\tau\rangle\cong C_2$.

We consider the $kG$-module $M:=\Delta (G/H)=\ker(k[G/H]\xrightarrow\varepsilon k)$.   
With respect to the basis $\{x:=\sigma H-H,y:=\sigma^2 H-H\}$ for $\Delta (G/H)$, the generators of $G$ act as
\[
\sigma\rightsquigarrow
\begin{bmatrix}[r]
-1&-1\\
1&0
\end{bmatrix}
\qquad\textrm{and}\qquad
\tau\rightsquigarrow
\begin{bmatrix}[r]
0&1\\
1&0
\end{bmatrix}.
\]
 We are interested in computing the element $2[M]=[M\otimes M]$ in $D(G)$. With respect to the basis
\[
\{\alpha:=x\otimes x,\ \beta:=x\otimes y,\ \gamma:=y\otimes x,\ \delta:=y\otimes y\},
\]
the action of $G$ on $M\otimes M$ is given by the matrices
\[
\sigma\rightsquigarrow
\begin{bmatrix}[r]
1&1&1&1\\
-1&0&-1&0\\
-1&-1&0&0\\
1&0&0&0
\end{bmatrix}
\qquad\textrm{and}\qquad
\tau\rightsquigarrow
\begin{bmatrix}[r]
0&0&0&1\\
0&0&1&0\\
0&1&0&0\\
1&0&0&0
\end{bmatrix}.
\]
If we now change our basis to the ordered basis $\{ v_1, v_2, v_3, v_4\}$, where $$ v_1:=\beta-\gamma, \ v_2:=\alpha-\beta-\gamma+\delta, \ v_3:=\delta, 
\textrm{ and }\ v_4:=\alpha,$$ then the matrices with respect to this new basis are
\[
\sigma\rightsquigarrow
\begin{bmatrix}[r]
1&&&\\
&0&0&1\\
&1&0&0\\
&0&1&0
\end{bmatrix}
\qquad\textrm{and}\qquad
\tau\rightsquigarrow
\begin{bmatrix}[r]
-1&&&\\
&1&0&0\\
&0&0&1\\
&0&1&0
\end{bmatrix},
\]
hence we conclude that $M\otimes M\cong\langle v_1\rangle\oplus\langle v_2,v_3,v_3\rangle$.  The one-dimensional summand  $\langle v_1 \rangle $ is 
the sign representation $k(-1)$ of $G=\Sigma_3$, and the 3-dimensional summand is isomorphic to the permutation module $k[G/H]$.  It follows that $2\cdot [M]=[k(-1)]\in D(G)$, or that $[M]$ has order 4 in the Dade group of $\Sigma_3$. The computation $2\cdot [M]=k(-1)$ shows that the group 
$\Upsilon ^{\Omega} (G) := \Upsilon (G) \cap D^{\Omega } (G)$ is not trivial when $G=\Sigma_3$.
\end{example}

By Theorem \ref{thm:LassueurExact}, the Dade group $D(G)$ is finitely generated, hence $D^{\Omega} (G)$ is also finitely generated. 
As there are infinitely many $G$-sets $X$ satisfying $X^S=\emptyset$, there must be many relations among the relative syzygies $\{ \Omega_X \}$.
In particular the assignment $X \to \Omega_X$ is not a one-to-one correspondence. 

In the rest of the section we prove a sequence of lemmas, each of which is designed to impose relations on the generating set $\{ \Omega_X\}$. At the end of the section we use these relations to prove that $D^{\Omega} (G)$ has an explicit finite generating set, namely those $\Omega_{G/P}$ where $P$ is a non-Sylow $p$-subgroup of $G$ (Proposition \ref{prop:Generates}).

\begin{lemma}\label{lem:FixedPoints}
Let $X$ and $Y$ be $G$-sets such that $X^S=Y^S=\emptyset$.  If for every $p$-subgroup $P \leq G$, the fixed-point set $X^P\neq \emptyset$ 
precisely when $Y^P \neq \emptyset$, then  $\Omega_X=\Omega_Y$ in $D(G)$.
\end{lemma}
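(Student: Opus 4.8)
The plan is to show that the hypothesis on fixed points implies that $X$ and $Y$ become isomorphic $G$-sets after applying a standard ``absorption'' construction, and then to transfer this to an identity among relative syzygies. First I would recall the key tool from the $p$-group theory, carried over here via the arguments of Bouc in \cite{Bouc-Tensor}: for a $G$-set $X$ with $X^S = \emptyset$, the class $\Omega_X \in D(G)$ depends only on the family of $p$-subgroups $P$ with $X^P \neq \emptyset$. Concretely, the relative syzygy $\Delta(X)$ fits into the $X$-split sequence $0 \to \Delta(X) \to kX \to k \to 0$, and $kX$ is relatively $X$-projective; the Relative Schanuel Lemma (Lemma~\ref{lem:Schanuel}) then shows that if $0 \to \Delta \to P \to k \to 0$ is any $X$-split exact sequence with $P$ relatively $X$-projective and $\Delta$ a capped Dade module, then $[\Delta] = \Omega_X$ in $D(G)$.

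The central observation is that ``$X$-split'' and ``relatively $X$-projective'' only see the set of $p$-subgroups $P$ with $X^P \neq \emptyset$: indeed $kX$ and $kY$ are relatively projective with respect to exactly the same $p$-subgroups, since $k[G/P]$ is a summand of $kX$ (up to the relevant projectivity) precisely when $X^P \neq \emptyset$, and likewise for $Y$. So I would argue that an exact sequence is $X$-split if and only if it is $Y$-split. Concretely: consider the sequence $0 \to \Delta(X) \to kX \to k \to 0$. It is $X$-split, and $\Delta(X)$ is a capped Dade module by Proposition~\ref{pro:RelativeDade}. The point is to exhibit an $X$-split (equivalently $Y$-split) resolution realizing $\Omega_X$ by a $Y$-projective module, and symmetrically one realizing $\Omega_Y$ by an $X$-projective module, then conclude with Relative Schanuel. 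One clean way: tensoring $0 \to \Delta(X) \to kX \to k \to 0$ with $kY$ gives $0 \to \Delta(X)\otimes kY \to kX \otimes kY \to kY \to 0$; this is still $X$-split, hence splits after tensoring with $kX$, and since by hypothesis $kX$ and $kY$ detect the same relative projectivity it also splits after tensoring with $kY$ — i.e.\ it is a $Y$-split sequence with $kX\otimes kY$ relatively $Y$-projective. Comparing with $0 \to \Delta(Y)\otimes kX \to kY\otimes kX \to kX \to 0$ (which is $Y$-split, hence $X$-split) and adding the sequences $0 \to \Delta(X) \to kX \to k\to 0$ and $0 \to \Delta(Y)\to kY \to k \to 0$ appropriately, the Relative Schanuel Lemma yields $\Delta(X)\oplus kY \oplus kX \cong \Delta(Y) \oplus kX \oplus kY$ in the relevant sense (or more precisely a stable isomorphism that, after passing to classes in $D(G)$, gives $\Omega_X + 0 = \Omega_Y + 0$, since $[kX] = [kY] = 0$ in $D(G)$ as permutation modules are Sylow-vertex trivial and their classes vanish).

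The main obstacle I anticipate is verifying cleanly that ``$X$-split $\Leftrightarrow$ $Y$-split'' and ``relatively $X$-projective $\Leftrightarrow$ relatively $Y$-projective'' under the fixed-point hypothesis, since this requires knowing that the relative projectivity of $kX$ is governed exactly by the Sylow-stabilizer data $\{P : X^P \neq \emptyset\}$. For this I would invoke the standard fact (as in \cite[Sect.~2.2]{Bouc-Tensor}) that $kX$ is relatively $k[G/P]$-projective for each orbit type $G/P$ in $X$, together with the observation that a transitive permutation module $k[G/P]$ is relatively $X$-projective iff $X^P \neq \emptyset$ (by condition \ref{def:X_proj_ii_c} of Definition~\ref{def:X_proj}, since $kX \otimes k[G/P] \to k[G/P]$ splits exactly when $X$ has a $P$-fixed point). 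Thus the class of relatively $X$-projective modules is determined by $\{P: X^P\neq\emptyset\}$, and the hypothesis forces this class to coincide with that of relatively $Y$-projective modules; everything else is a formal diagram chase with the Relative Schanuel Lemma. Once this equivalence is in hand, the rest of the argument is routine bookkeeping in $D(G)$, using that $X^S = Y^S = \emptyset$ guarantees all the relevant syzygies are capped Dade modules by Proposition~\ref{pro:RelativeDade}.
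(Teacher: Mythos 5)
Your overall strategy — compare the two augmentation sequences $0\to\Delta(X)\to kX\to k\to 0$ and $0\to\Delta(Y)\to kY\to k\to 0$ via the Relative Schanuel Lemma — is the same as the paper's, and the core algebraic identity $\Delta(X)\oplus kY\cong\Delta(Y)\oplus kX$ you are aiming for is exactly what the paper produces. The difference is in how you justify that the two sequences can be compared. The paper does this by reducing to the Sylow subgroup: a sequence of $kG$-modules is $X$-split iff its restriction to $S$ is $\res^G_SX$-split (averaging a $kS$-splitting), so everything is decided by the $S$-sets $\res^G_SX,\res^G_SY$; the fixed-point hypothesis supplies $S$-maps in both directions, and then Bouc's Cor.\ 2.1.5 and Cor.\ 2.2.4 of \cite{Bouc-Tensor} (which are stated for $p$-groups) give exactly the two facts needed — that the $\Delta(Y)$-sequence is $X$-split and that $kY$ is relatively $X$-projective. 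You instead try to argue the equivalence of $X$- and $Y$-relative notions directly at the level of $G$-sets.

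Two concrete gaps in that route. First, you repeatedly speak of ``orbit type $G/P$ in $X$'' with $P$ a $p$-subgroup, but the lemma places no isotropy restriction on $X$ and $Y$: the orbits are $G/H$ with $H$ arbitrary. Your claim that ``$k[G/P]$ is relatively $X$-projective iff $X^P\neq\emptyset$'' is correct for a $p$-subgroup $P$, but to deduce that $kX$ and $kY$ govern the same class of relatively projective modules you still have to pass from an arbitrary stabilizer $H$ to its $p$-part: one needs to observe that the components of $k[G/H]$ all have vertices subconjugate to a Sylow $p$-subgroup $T$ of $H$, and that such a $T$ always occurs as a vertex, so that the relative projectivity of $k[G/H]$ is controlled by $T$. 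You cannot invoke Corollary~\ref{cor:Replace} to make this reduction, since that corollary is itself proved as a consequence of the lemma under discussion. The restriction-to-$S$ trick the paper uses sidesteps all of this cleanly.

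Second, the closing step is phrased incorrectly: you write that ``$[kX]=[kY]=0$ in $D(G)$,'' but $kX$ and $kY$ are generally not Dade modules (they need not be capped, and their endomorphism algebras are not Sylow-vertex trivial in general), so they do not define classes in $D(G)$. The correct way to extract $\Omega_X=\Omega_Y$ from $\Delta(X)\oplus kY\cong\Delta(Y)\oplus kX$ is via the uniqueness of caps (Proposition~\ref{pro:UniqueCaps}): since $X^S=Y^S=\emptyset$, neither $kX$ nor $kY$ has any Sylow-vertex component, so the caps of $\Delta(X)$ and $\Delta(Y)$ must be isomorphic. With these two repairs your argument would go through, but as written it has real gaps that the paper's proof avoids by design.
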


\begin{proof} The argument given in \cite[Lem. 3.2.7]{Bouc-Tensor} also applies here. The condition implies that there exist $S$-maps $f : X \to Y$ and $f': Y \to X$. Note that a sequence of $kG$-modules splits if and only if its restriction splits as a sequence of $kS$-modules, since a splitting of $kS$-modules can be averaged to get a splitting of $kG$-modules. This implies that a sequence of $kG$-modules is $kX$-split if and only if its restriction of $S$ is $k[\res^G _S X]$-split.

Consider the sequence  
\begin{equation}\label{eqn:Y-sequence}
0 \to \Delta (Y) \to kY \to k \to 0.
\end{equation}
This sequence is $Y$-split, hence its restriction to $S$ is $k[\res^G _S Y]$-split. Since there is a $S$-map $f: X \to Y$, by \cite[Cor. 2.1.5]{Bouc-Tensor} the restricted $kS$-module sequence is also $\res^G _S X$-split. This gives that the sequence in (\ref{eqn:Y-sequence}) is also $X$-split

By definition $kY$ is $Y$-projective, so $k[\res^G _S Y]$ is $\res^G _S Y$-projective as a $kS$-module. Since there is an $S$-map $f': Y \to X$, by \cite[Cor. 2.2.4]{Bouc-Tensor} we can conclude that $k[\res^G _S Y]$ is also $\res^G _S X$-projective.  This implies that $kY$ is also $X$-projective, as one can easily see using part (iii) of Definition \ref{def:X_proj}.  

We have shown that the sequence in $(\ref{eqn:Y-sequence})$ is $X$-split with $X$-projective middle term.  Note that 
the sequence $$0 \to \Delta (X) \to kX \to k \to 0$$ is also $X$-split and the middle term $kX$ is $X$-projective.
Applying the Relative Schanuel's Lemma to these two sequences, we obtain  $$ \Delta (X) \oplus kY \cong \Delta (Y) \oplus kX.$$
Since $X^S=Y^S=\emptyset$, both $\Delta(X)$ and $\Delta(Y)$ are capped Dade $kG$-modules. From the isomorphism above and by the uniqueness of caps proved in  Proposition \ref{pro:UniqueCaps}, we obtain $$\Omega _X=[\Delta (X)]=[Cap (\Delta (X) )]=[Cap (\Delta (Y) )]=[\Delta(X)]=\Omega _Y,$$ and the result is proved.  
\end{proof}

The following result is an immediate consequence of Lemma \ref{lem:FixedPoints}.

\begin{corollary}\label{cor:Reduction} Let $X$ be a $G$-set with isotropy in $\cF_G$. If there is a decomposition $X=X_0\amalg X_1$ such that there is an $S$-map $X_0 \to X_1$, then $\Omega_X=\Omega_{X_1}$. 
\end{corollary}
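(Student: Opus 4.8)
The plan is to obtain the corollary as a direct application of Lemma \ref{lem:FixedPoints} to the pair of $G$-sets $X$ and $X_1$. The isotropy hypothesis on $X$ means no point stabilizer contains a Sylow $p$-subgroup, so $X^S=\emptyset$; and since $X_1$ is a sub-$G$-set of $X$ we also have $X_1^S\subseteq X^S=\emptyset$. Thus the two modules $\Delta(X)$ and $\Delta(X_1)$ are both capped Dade modules (Proposition \ref{pro:RelativeDade}), and to conclude $\Omega_X=\Omega_{X_1}$ via Lemma \ref{lem:FixedPoints} it suffices to verify that for every $p$-subgroup $P\leq G$ one has $X^P\neq\emptyset$ if and only if $X_1^P\neq\emptyset$.

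First I would reduce to the case $P\leq S$. For any $G$-set $Z$ and any $g\in G$, the map $z\mapsto gz$ identifies $Z^P$ with $Z^{gPg^{-1}}$, so the emptiness of $Z^P$ depends only on the $G$-conjugacy class of $P$; since every $p$-subgroup of $G$ is $G$-conjugate into $S$, it is enough to check the equivalence for subgroups $P\leq S$. Now fix such a $P$. The implication $X_1^P\neq\emptyset\Rightarrow X^P\neq\emptyset$ is immediate from $X_1\subseteq X$. For the converse, suppose $X^P\neq\emptyset$; as $X=X_0\amalg X_1$, either $X_1^P\neq\emptyset$ already, or there is a point $x\in X_0^P$, in which case the given $S$-map $f\colon X_0\to X_1$ — being in particular $P$-equivariant since $P\leq S$ — sends $x$ to a point $f(x)\in X_1^P$. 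Either way $X_1^P\neq\emptyset$, which establishes the hypothesis of Lemma \ref{lem:FixedPoints} and hence the equality $\Omega_X=\Omega_{X_1}$ in $D(G)$.

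The argument is essentially bookkeeping and I expect no serious obstacle; the one point requiring a moment's care is the reduction to $P\leq S$, i.e.\ remembering that the $S$-map $f$ only controls fixed points for $p$-subgroups conjugate into $S$ — this is harmless precisely because the fixed-point emptiness conditions appearing in Lemma \ref{lem:FixedPoints} are invariant under $G$-conjugation of the test subgroup.
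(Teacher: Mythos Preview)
Your proof is correct and follows exactly the approach the paper intends: the paper simply states that the corollary is an immediate consequence of Lemma~\ref{lem:FixedPoints}, and you have spelled out precisely why the fixed-point hypotheses of that lemma are met, including the reduction to $P\leq S$ needed to exploit the given $S$-map. There is nothing to add.
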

 
This  shows that we may delete any $G$-orbit that maps into a distinct orbit without affecting the resulting element in the group of relative syzygies.
Another consequence of Lemma \ref{lem:FixedPoints} is the following.

\begin{corollary}\label{cor:Replace} Let $X$ be a finite $G$-set such that $X^S\neq\emptyset$, and let $Y$ be the $G$-set obtained by replacing each orbit $[G/H]$ of $X$ by $[G/T]$ for some $T\in\Syl_p(H)$.  Then $\Omega_X=\Omega_Y$ in $D(G)$.
In particular, if $H\leq G$ is a subgroup with $T\in \Syl_p(H)$ such that $T\notin\Syl_p(G)$, then $\Omega _{G/H} =\Omega_{G/T}$ in $D(G)$.
\end{corollary}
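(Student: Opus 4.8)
The plan is to reduce everything to a direct application of Lemma \ref{lem:FixedPoints}, after first arranging that the hypothesis $X^S\neq\emptyset$ does not cause trouble. Recall that by the convention for $\Omega_X$ when $X^S\neq\emptyset$, we have $\Omega_X=0$; so the real content is to show $\Omega_Y=0$ as well, i.e. that $Y^S\neq\emptyset$, and then to handle the ``in particular'' clause where $X=G/H$ with $T\notin\Syl_p(G)$, in which case $X^S$ may well be empty. Let me separate the two assertions.

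First I would prove the ``in particular'' statement, since it is the genuinely new input. Let $H\leq G$ with $T\in\Syl_p(H)$ and $T\notin\Syl_p(G)$. Set $X:=G/H$ and $Y:=G/T$; I claim $X^P\neq\emptyset$ iff $Y^P\neq\emptyset$ for every $p$-subgroup $P\leq G$. Indeed, $(G/H)^P\neq\emptyset$ iff $P$ is $G$-subconjugate to $H$, which (since $P$ is a $p$-group) happens iff $P$ is $G$-subconjugate to a Sylow $p$-subgroup of $H$, i.e. iff $P$ is $G$-subconjugate to $T$, i.e. iff $(G/T)^P\neq\emptyset$. Since $T\notin\Syl_p(G)$ we have $Y^S=(G/T)^S=\emptyset$, and likewise $X^S=\emptyset$ (as $H$ contains no Sylow $p$-subgroup of $G$, else $T$ would be one). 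Thus both $X$ and $Y$ satisfy the hypotheses of Lemma \ref{lem:FixedPoints}, and we conclude $\Omega_{G/H}=\Omega_X=\Omega_Y=\Omega_{G/T}$.

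Next I would treat the general statement. Write $X=\coprod_i G/H_i$ and $Y=\coprod_i G/T_i$ with $T_i\in\Syl_p(H_i)$. If $X^S\neq\emptyset$, then some $H_i$ contains a conjugate of $S$, hence $T_i\in\Syl_p(H_i)$ is itself a Sylow $p$-subgroup of $G$, so $(G/T_i)^S\neq\emptyset$ and therefore $Y^S\neq\emptyset$. By the convention, $\Omega_X=0=\Omega_Y$, and we are done. (If one instead wants the statement without invoking the convention — i.e. in the setting $X^S=\emptyset$, hence all $T_i\notin\Syl_p(G)$ — then the argument of the ``in particular'' case applies orbitwise: $X^P\neq\emptyset$ iff $Y^P\neq\emptyset$ for all $p$-subgroups $P$, since $(G/H_i)^P\neq\emptyset$ iff $(G/T_i)^P\neq\emptyset$, so Lemma \ref{lem:FixedPoints} again gives $\Omega_X=\Omega_Y$.)

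The only subtlety — and the point I would be most careful about — is the case analysis around whether $X^S$ is empty, since the definition of $\Omega_X$ is piecewise and Lemma \ref{lem:FixedPoints} is stated only for $G$-sets with no $S$-fixed points. Once one observes that passing from $G/H$ to $G/T$ preserves, orbit by orbit, the set of $p$-subgroups with nonempty fixed points (because $(G/H)^P\neq\emptyset \Leftrightarrow P$ is $G$-subconjugate to $\mathrm{Syl}_p(H)$ for a $p$-group $P$), everything else is a bookkeeping exercise, and there is no serious obstacle.
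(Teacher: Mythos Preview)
Your argument is correct and is exactly the intended one: the paper gives no proof of this corollary, merely presenting it as a consequence of Lemma~\ref{lem:FixedPoints}, and the key observation you supply --- that for a $p$-subgroup $P$ one has $(G/H)^P\neq\emptyset$ iff $P$ is $G$-subconjugate to $T\in\Syl_p(H)$ iff $(G/T)^P\neq\emptyset$ --- is precisely what is needed. You also rightly flag that the hypothesis $X^S\neq\emptyset$ appears to be a typo for $X^S=\emptyset$ (at this point in the paper $\Omega_X$ has only been defined when $X^S=\emptyset$, and the parallel Corollary~\ref{cor:Relations2}\ref{cor:Relations2_ii} for $\omega_X$ carries no such hypothesis); your handling of both readings is sound.
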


Corollary \ref{cor:Replace} allows us to restrict our attention to $p$-subgroup isotropy $G$-sets when considering the generators $\{\Omega_X\}$.
These two corollaries are used in the proof of  Proposition \ref{prop:Generates} to give a generating set for $D^{\Omega} (G)$.

The following is the analogue of \cite[Lem. 3.2.8]{Bouc-Tensor}. A similar result for strongly capped endo-$p$-permutation modules was also proven by Lassueur (see \cite[Lemma 5.7]{Lassueur-Dade}).

\begin{lemma}\label{lem:ShortExact}
Let $X$ be a $G$-set such that $X^S=\emptyset$, and let $0\to L\to k X\to N\to 0$ be an $X$-split short exact sequence of $kG$-modules.  Then
$L$ is a capped Dade module if and only if $N$ is a capped Dade module, in which case $[L]=\Omega_X+[N]$ in $D(G)$.
\end{lemma}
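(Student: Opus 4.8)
The strategy is to play the short exact sequence $0\to L\to kX\to N\to 0$ against the defining sequence $0\to\Delta(X)\to kX\to k\to 0$ via the Relative Schanuel's Lemma. Both sequences are $X$-split (the second by Lemma \ref{lem:Decomp}'s proof, the first by hypothesis) and both have middle term $kX$, which is $X$-projective by definition. However, the two sequences have different right-hand terms, so I cannot apply Lemma \ref{lem:Schanuel} directly. Instead I would first tensor each sequence with the ``other'' cokernel, or rather tensor one sequence appropriately so that the cokernels match up; concretely, tensor $0\to L\to kX\to N\to 0$ with $\Delta(X)^\ast$ and tensor $0\to\Delta(X)\to kX\to k\to 0$ with $N$. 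Tensoring preserves $X$-splitness, and $kX\otimes\Delta(X)^\ast$ and $kX\otimes N$ are both $X$-projective (they are summands of $kX\otimes(\text{something})$). The common cokernel of these two new sequences is $\Delta(X)^\ast\otimes N$ — wait, that needs care: tensoring the first with $\Delta(X)^\ast$ gives cokernel $N\otimes\Delta(X)^\ast$, and tensoring the second with $N$ gives cokernel $k\otimes N\cong N$, which do not match. So instead I would tensor the second sequence with $\Delta(X)^\ast$ as well; but that changes its kernel. The cleanest route: apply Relative Schanuel to the pair obtained by tensoring the $L$-sequence with $k$ (no change) and comparing with a sequence built to have cokernel $N$.

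Let me restate the plan more carefully. Tensor the defining sequence $0\to\Delta(X)\to kX\to k\to 0$ with $N$ to get the $X$-split sequence $0\to\Delta(X)\otimes N\to kX\otimes N\to N\to 0$ with $X$-projective middle term. Now both this and the hypothesized sequence $0\to L\to kX\to N\to 0$ are $X$-split exact sequences with the same right-hand term $N$ and $X$-projective middle terms, so the Relative Schanuel's Lemma (Lemma \ref{lem:Schanuel}) applies and yields
\[
kX\oplus(\Delta(X)\otimes N)\;\cong\;(kX\otimes N)\oplus L.
\]
Now one direction is immediate: if $N$ is a capped Dade module, then $\Delta(X)\otimes N$ is a tensor product of capped Dade modules (using Proposition \ref{pro:RelativeDade} for $\Delta(X)$, valid since $X^S=\emptyset$), hence a capped Dade module; the displayed isomorphism then exhibits $L$ as a summand of $kX\oplus(\Delta(X)\otimes N)\ominus(kX\otimes N)$... which is not quite a clean summand statement either. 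The right way: rewrite as $L\oplus(kX\otimes N)\cong kX\oplus(\Delta(X)\otimes N)$, so $L$ is a direct summand of $kX\oplus(\Delta(X)\otimes N)$; since $\Delta(X)\otimes N$ is a capped Dade module and $kX$ is a $p$-permutation module with no Sylow-vertex components (as $X^S=\emptyset$), the right-hand side is a Sylow-vertex-trivial-endomorphism situation — more precisely I would check directly that $\End$ of $kX\oplus(\Delta(X)\otimes N)$... no. Better: a summand of a Dade module is a Dade module, but $kX\oplus(\text{Dade})$ need not be Dade. So I should instead argue: $L$ is compatible-summand style. Actually the honest approach is to take $\End(L)\cong L^\ast\otimes L$ and show it is Sylow-vertex trivial by extracting it from $\End$ of both sides suitably, or — cleaner — to note that since $kX$ has no Sylow-vertex components, $L$ and $\Delta(X)\otimes N$ have the ``same'' Sylow-vertex components up to the isomorphism, and then invoke that the class of Dade modules is closed under summands together with a direct inspection. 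I would mirror exactly the bookkeeping in Bouc \cite[Lem. 3.2.8]{Bouc-Tensor}: from $L\oplus kX\otimes N\cong kX\oplus\Delta(X)\otimes N$ one sees $L$ is a Dade module iff $\Delta(X)\otimes N$ is (the $kX$ and $kX\otimes N$ terms contributing only non-Sylow-vertex $p$-permutation summands to the respective endomorphism rings), and $\Delta(X)\otimes N$ is a capped Dade module iff $N$ is — because $\Delta(X)$ is a capped Dade module, so tensoring by it and by its dual are inverse operations on the Dade group up to compatibility. For cappedness one checks $k\mid\End(L)$ iff $k\mid\End(N)$ using the same isomorphism restricted to $S$ and Lemma \ref{lem:Benson-Carlson}.

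**Computing the class.** Once $L$ and $N$ are known to be capped Dade modules, the identity $L\oplus(kX\otimes N)\cong kX\oplus(\Delta(X)\otimes N)$ passes to the Dade group. Taking caps: since $kX\otimes N$ and $kX$ have no Sylow-vertex components other than those coming from $N$ and (nothing, resp.) — more precisely $\operatorname{Cap}(kX\otimes N)$ and $\operatorname{Cap}(\Delta(X)\otimes N)$ need to be compared. Actually the direct-sum relation in the Dade group is not additive, so instead I would argue at the level of compatibility classes: the isomorphism shows $L$ is compatible with $\Delta(X)\otimes N$ (their direct sum embeds in something Sylow-vertex-trivial after the appropriate manipulation — this is exactly Lemma \ref{lem:CapEquiv}\ref{lem:CapEquiv_i} applied to the summand relation), hence $[L]=[\Delta(X)\otimes N]=[\Delta(X)]+[N]=\Omega_X+[N]$ in $D(G)$, where the middle equality is the definition of the group operation (Proposition \ref{pro:DadeGroup}) and $\Omega_X=[\Delta(X)]$ by definition since $X^S=\emptyset$.

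**The main obstacle.** The delicate point is the passage from the module isomorphism $L\oplus(kX\otimes N)\cong kX\oplus(\Delta(X)\otimes N)$ to statements about Dade modules and their classes: one cannot simply ``cancel'' the $p$-permutation terms, since the class of Dade modules is not closed under direct sums. The resolution is to track Sylow-vertex components only: $kX$, $kX\otimes N$ contribute no Sylow-vertex component beyond those of $N$ (and $kX$ none at all), since $X^S=\emptyset$ forces every indecomposable summand of $kX$ to have non-Sylow vertex, and tensoring by such a module cannot create a Sylow vertex (vertices of a tensor product lie in the intersection of vertices, as used in the proof of Proposition \ref{pro:UniqueCaps}). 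Carefully exploiting this — and using Lemma \ref{lem:Benson-Carlson} to count copies of $k$ in endomorphism rings — is the technical heart of the argument; everything else is formal manipulation already carried out for $p$-groups by Bouc.
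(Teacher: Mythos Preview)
Your derivation of the isomorphism
\[
L\oplus(kX\otimes N)\;\cong\;kX\oplus(\Delta(X)\otimes N)
\]
via a single application of the Relative Schanuel Lemma is correct, and this is exactly how the paper obtains the class identity $[L]=\Omega_X+[N]$ at the end: since $kX$ and $kX\otimes N$ have no Sylow-vertex components (as $X^S=\emptyset$), Krull--Schmidt forces $Cap(L)\cong Cap(\Delta(X)\otimes N)$, and one is done.

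The gap is in the ``$L$ is Dade if and only if $N$ is Dade'' part. Your parenthetical claim that ``the $kX$ and $kX\otimes N$ terms contribute only non-Sylow-vertex $p$-permutation summands to the respective endomorphism rings'' is not right as stated: when you expand $\End\bigl(L\oplus(kX\otimes N)\bigr)$ you pick up cross-terms such as $\Hom(L,\,kX\otimes N)\cong L^*\otimes kX\otimes N$, and at this stage you know nothing about $L$, so you cannot assert that these are $p$-permutation. Likewise, $kX\oplus(\Delta(X)\otimes N)$ is not obviously a Dade module (so ``summand of a Dade module is Dade'' is unavailable): the cross-term $kX\otimes(\Delta(X)\otimes N)$ is a tensor of a permutation module with a Dade module, and such a tensor product is in general \emph{not} a $p$-permutation module. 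Tracking Sylow-vertex components alone tells you which indecomposable Sylow-vertex summands $L$ has, but it does not by itself force $\End(L)$ to be $p$-permutation.

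The paper (following Bouc's Lemma 3.2.8) avoids this trap by applying Relative Schanuel to a \emph{different} pair of sequences, chosen so that the output is an isomorphism involving $\End(L)$ and $\End(N)$ directly. Concretely, tensor the given sequence with $L^*$ and compare it with the sequence obtained by dualizing and then tensoring with $N$; both are $X$-split with $X$-projective middle term and share the cokernel $L^*\otimes N$. Schanuel then yields
\[
\End(L)\oplus(kX\otimes N)\;\cong\;\End(N)\oplus(kX\otimes L^*),
\]
and after adding $kX\otimes N^*$ and using the split dual sequence one gets
\[
\End(L)\oplus(kX\otimes N)\oplus(kX\otimes N^*)\;\cong\;\End(N)\oplus\End(kX).
\]
Now the right-hand side is manifestly a $p$-permutation module when $N$ is Dade, and its only Sylow-vertex components lie in $\End(N)$; since the $kX\otimes(-)$ terms on the left contribute no Sylow-vertex components, one reads off immediately that $\End(L)$ is Sylow-vertex trivial $p$-permutation. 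That extra Schanuel step---producing endomorphism rings rather than the modules themselves---is the missing idea in your proposal.
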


\begin{proof}
Tensoring the short exact sequence with $L^\ast$ yields 
\[
0\to L\otimes L^\ast\to k X\otimes L^\ast\to N\otimes L^\ast\to 0, 
\]
while first dualizing and then tensoring with $N$ yields
\[
0\to N^\ast\otimes N\to k X\otimes N\to L^\ast\otimes N\to 0.
\]
Both of these sequences are $X$-split and have $X$-projective middle term, so we may apply the Relative Schanuel's Lemma to obtain
\[
\End(L)\oplus(k X\otimes N)\cong\End(N)\oplus(k X\otimes L^\ast).
\]

The dual of the original sequence  $0\to N^\ast\to k X\to L^\ast\to 0$ is $X$-split, which gives  $\End(k X)\cong k X\otimes k X\cong(k X\otimes N^\ast)\oplus(k X\otimes L^\ast)$. Thus adding $k X\otimes N^\ast$ to both sides of the above isomorphism yields
\[
\End(L)\oplus(k X\otimes N)\oplus(k X\otimes N^\ast)\cong\End(N)\oplus\End(k X).
\]

If $N$ is a Dade module, then $\End(N)$ is $p$-permutation module, so our observation that $\End(L) \mid (\End(N) \oplus \End (kX) )$
shows that $\End(L)$ is also a $p$-permutation module.  Moreover, as $X^S=\emptyset$, every component of $\End(k X)\cong (kX)^* \otimes kX \cong k[X\times X]$ has a non-Sylow vertex, so the only Sylow-vertex components of the right hand side are those of $\End(N)$, which must be trivial as $N$ is a Dade module.  It follows that the only Sylow-vertex components of $\End(L)$ must be trivial as well, so $L$ is a Dade module. 
 
 If $N$ is a capped Dade module, then $k$ is a component of $\End(N)$. Note that $k$ cannot be a component of either $k X\otimes N^\ast$ or $k X\otimes N$ since these modules have components with non-Sylow vertices. This implies that $k$ is a summand of $\End(L)$, and hence $L$ must also be capped. The other direction of the statement can be proved in a similar way.
 
Assume now that both $L$ and $N$ are capped Dade $kG$-modules.  The exact sequence
\[
0\to\Delta(X)\to k X\to k\to 0
\]
is $X$-split, and it remains $X$-split after tensoring with $N$, so we may apply the Relative Schanuel's Lemma to the sequences
\[
\xymatrix@R=1.6em@C=1.2em{
0\ar[r]&L\ar[r]&kX\ar[r]&N\ar[r]\ar@{=}[d]&0\\
0\ar[r]&\Delta(X)\otimes N\ar[r]&kX\otimes N\ar[r]&N\ar[r]&0}
\]
to obtain
\[
L\oplus(k X\otimes N)\cong k X\oplus(\Delta(X)\otimes N).
\]
Again, all components of $k X$, and hence $k X\otimes N$, have non-Sylow vertices, so $Cap(L)=Cap(\Delta(X)\otimes N)$, proving the last statement.
\end{proof}

Continuing to follow Bouc (cf. \cite[Lem. 5.2.1]{Bouc-Tensor}), we have the following:

\begin{lemma}\label{lem:Product}
If $X$ and $Y$ are $G$-sets such that $X^S=Y^S=\emptyset$, then
\[
\Omega_{X\amalg Y}+\Omega_{X\times Y}=\Omega_X+\Omega_Y
\]
in $D(G)$.
\end{lemma}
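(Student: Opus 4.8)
The plan is to mimic Bouc's argument from \cite[Lem. 5.2.1]{Bouc-Tensor}, working with explicit $X$-split short exact sequences and repeatedly invoking Lemma \ref{lem:ShortExact} and the Relative Schanuel's Lemma (Lemma \ref{lem:Schanuel}). The starting point is the canonical surjection $k[X\amalg Y]\cong kX\oplus kY\to k$, whose kernel is $\Delta(X\amalg Y)$; I would instead want a description of $\Delta(X\amalg Y)$ that relates it to $\Delta(X)$, $\Delta(Y)$, and $\Delta(X\times Y)$. The key observation is that there is a short exact sequence of $kG$-modules
\[
0\to \Delta(X)\otimes kY\to k[X\times Y]\to \Delta(X)\to 0
\]
obtained by tensoring $0\to\Delta(X)\to kX\to k\to 0$ with $kY$ and using $kX\otimes kY\cong k[X\times Y]$; this sequence is $(X\times Y)$-split (indeed it is already $X$-split, since the original sequence is $X$-split and $X\times Y$ has an $S$-map to $X$ — here one uses that $X^S=\emptyset$ guarantees nothing, but the projection $X\times Y\to X$ is always an $S$-map, while $X\times Y$ surjects $S$-equivariantly also onto both factors). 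Symmetrically one gets $0\to\Delta(Y)\otimes kX\to k[X\times Y]\to\Delta(Y)\to 0$.

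Next I would build a single $(X\amalg Y)$-split sequence with middle term $k[X\amalg Y]=kX\oplus kY$ whose kernel is, up to relatively-$(X\amalg Y)$-projective summands, the tensor product $\Delta(X)\otimes\Delta(Y)$ (which represents $\Omega_X+\Omega_Y$ by definition of the group operation on $D(G)$, since both $\Delta(X)$ and $\Delta(Y)$ are capped Dade modules by Proposition \ref{pro:RelativeDade} and $X^S=Y^S=\emptyset$ forces $(X\amalg Y)^S=\emptyset$ as well). Concretely, consider the map $kX\oplus kY\to k\oplus k\to k$ and analyze the kernel via the diagram
\[
\xymatrix@R=1.4em@C=1.4em{
0\ar[r]&\Delta(X)\oplus\Delta(Y)\ar[r]&kX\oplus kY\ar[r]&k\oplus k\ar[r]&0\\
}
\]
combined with $0\to k\to k\oplus k\to k\to 0$; pulling back gives $0\to\Delta(X)\oplus\Delta(Y)\to L\to k\to 0$ where $L$ sits in $0\to L\to kX\oplus kY\to k\to 0$, so $L\cong\Delta(X\amalg Y)$. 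The content is then to show that $L$, as an element of $D(G)$, equals $[\Delta(X)\otimes\Delta(Y)]-\Omega_{X\times Y}$. For this I would tensor the sequence $0\to\Delta(X)\to kX\to k\to 0$ with $\Delta(Y)$, obtaining an $(X\amalg Y)$-split sequence $0\to\Delta(X)\otimes\Delta(Y)\to kX\otimes\Delta(Y)\to\Delta(Y)\to 0$, note that $kX\otimes\Delta(Y)$ is a summand of $kX\otimes kY=k[X\times Y]$ (using the $X$-split sequence $0\to\Delta(Y)\to kY\to k\to 0$), and then apply Lemma \ref{lem:ShortExact} twice — once to extract that $[kX\otimes\Delta(Y)]$-type terms contribute $\Omega_{X\times Y}$ via the sequence $0\to\Delta(X\times Y)\to k[X\times Y]\to k\to 0$ restricted to the relevant summand — to bookkeep the syzygy classes.

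The cleanest route, which I would actually carry out, is: (1) by Lemma \ref{lem:ShortExact} applied to $0\to\Delta(X)\otimes\Delta(Y)\to kX\otimes\Delta(Y)\to\Delta(Y)\to 0$ (an $X$-split, hence the middle term relatively $X$-projective) deduce $[\Delta(X)\otimes\Delta(Y)]=\Omega_X+[\Delta(Y)]=\Omega_X+\Omega_Y$ — wait, this needs the middle term to be $kX$ itself, not $kX\otimes\Delta(Y)$; so instead I use that $kX\otimes\Delta(Y)\oplus(\text{non-Sylow-vertex stuff})\cong k[X\times Y]\ominus kY$-ish, i.e. from $kX\otimes kY\cong (kX\otimes\Delta(Y))\oplus kX$ we get that the class of $kX\otimes\Delta(Y)$ is governed by $k[X\times Y]$, and then Lemma \ref{lem:ShortExact} applied with ambient set $X\times Y$ gives $[\Delta(X)\otimes\Delta(Y)]=\Omega_{X\times Y}+[\Delta(Y)\oplus(\text{non-capped})]$ — I will need to be careful here, but the upshot after clearing the non-Sylow-vertex (hence $D(G)$-trivial) summands is exactly $[\Delta(X)\otimes\Delta(Y)]+\Omega_{X\times Y}=\Omega_{X\amalg Y}+\Omega_{X\times Y}+\cdots$; finally (2) reconcile $[\Delta(X)\otimes\Delta(Y)]=\Omega_X+\Omega_Y$ with the relation just derived to get $\Omega_{X\amalg Y}+\Omega_{X\times Y}=\Omega_X+\Omega_Y$. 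The main obstacle will be the careful bookkeeping in step (1): tracking which summands are relatively $X$-projective versus relatively $(X\times Y)$-projective, and confirming at each stage that the $X$-split hypothesis of Lemma \ref{lem:ShortExact} is met (which reduces, via the argument in Lemma \ref{lem:FixedPoints}, to checking the existence of appropriate $S$-maps between the indexing sets — and since $X\times Y$ maps $S$-equivariantly onto $X$, $Y$, and $X\amalg Y$ maps to neither but receives no map either, some care with the direction of maps and with the hypothesis $X^S=Y^S=\emptyset$ is needed). Everything else is a routine application of Schanuel's lemma and the additivity of $\Omega$ under the group structure.
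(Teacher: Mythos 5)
There is a genuine gap. You identify the right toolkit---Lemma \ref{lem:ShortExact}, the Relative Schanuel's Lemma, tensoring augmentation sequences---but you never produce the one exact sequence that makes the argument go through, and along the way you make some errors. The paper's proof is short: take the tensor product of the two complexes $0\to\Delta(X)\to kX\to k\to 0$ and $0\to\Delta(Y)\to kY\to k\to 0$. This yields the four-term exact sequence
\[
0 \to \Delta(X)\otimes\Delta(Y) \to kX\otimes kY \to kX\oplus kY \to k \to 0,
\]
which is $(X\times Y)$-split because each factor is $X$-split (resp.\ $Y$-split). Since $kX\oplus kY\cong k[X\amalg Y]$ and the last map is the augmentation, the kernel of $kX\oplus kY\to k$ is $\Delta(X\amalg Y)$, so one can split off the short exact sequence
\[
0\to\Delta(X)\otimes\Delta(Y)\to k[X\times Y]\to\Delta(X\amalg Y)\to 0,
\]
which is $(X\times Y)$-split with middle term $k[X\times Y]$. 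A single application of Lemma \ref{lem:ShortExact} then gives $[\Delta(X)\otimes\Delta(Y)]=\Omega_{X\times Y}+[\Delta(X\amalg Y)]$, i.e.\ $\Omega_X+\Omega_Y=\Omega_{X\times Y}+\Omega_{X\amalg Y}$.

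Your proposal never arrives at this sequence. The ``key observation'' you start with is already wrong: tensoring $0\to\Delta(X)\to kX\to k\to 0$ with $kY$ has cokernel $kY$, not $\Delta(X)$, so the displayed sequence $0\to\Delta(X)\otimes kY\to k[X\times Y]\to\Delta(X)\to 0$ is not exact. Your next plan---to build a sequence with middle term $k[X\amalg Y]=kX\oplus kY$ and kernel $\Delta(X)\otimes\Delta(Y)$---cannot work for dimension reasons: $\dim(\Delta(X)\otimes\Delta(Y))=(|X|-1)(|Y|-1)$ generally exceeds $\dim(kX\oplus kY)=|X|+|Y|$. The ``cleanest route'' you then sketch is the one you yourself flag as incomplete; Lemma \ref{lem:ShortExact} requires the middle term to be exactly $kZ$ for the relevant $G$-set $Z$, and the manipulations with $kX\otimes\Delta(Y)$ and ``non-Sylow-vertex stuff'' don't deliver such a sequence. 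The fix is precisely the tensor-product-of-complexes construction above: it replaces all the ad hoc bookkeeping with one clean $(X\times Y)$-split short exact sequence to which Lemma \ref{lem:ShortExact} applies directly.
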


\begin{proof}
In light of Lemma \ref{lem:ShortExact}, it suffices to find an exact sequence
\[
0\to\Delta(X)\otimes\Delta(Y)\to k [X\times Y]\to\Delta(X\amalg Y)\to 0
\]
that is $k (X\times Y)$-split.  If we take the tensor product of the short exact sequences 
\begin{equation}\label{Eqn:X-sequence}
\xymatrix@R=.5em@C=1.2em{
0\ar[r]&\Delta(X)\ar[r]&kX\ar[r]&k\ar[r]&0}
\end{equation}
and 
\begin{equation}\label{Eqn:Y-sequence}
\xymatrix@R=.5em@C=1.2em{
0\ar[r]&\Delta(Y)\ar[r]&kY\ar[r]&k\ar[r]&0,
}
\end{equation}
then we obtain the long exact sequence
\begin{equation}\label{Eqn:LongSequence}
0 \to \Delta (X) \otimes \Delta (Y) \to kX\otimes kY \to kX\oplus kY \to k \to 0.
\end{equation}
We  claim that this long exact sequence is  $X \times Y$-split,  meaning that  it becomes a split sequence (as a long exact sequence) after it is tensored with $k[X\times Y]$.
To see this, note that  the short exact sequences (\ref{Eqn:X-sequence}) and (\ref{Eqn:Y-sequence}) are $X$-split and  $Y$-split, respectively, so their tensor product becomes a split exact sequence after we tensor with $kX \otimes kY$. This gives that the long exact sequence (\ref{Eqn:LongSequence}) is $X\times Y$-split since $k[X\times Y] \cong  kX \otimes kY$.

Note that $kX\oplus kY \cong k[X \amalg Y]$, hence the kernel of the map $kX\oplus kY \to k$ is isomorphic to $\Delta (X\amalg Y )$. Using the exactness of the sequence (\ref{Eqn:LongSequence}), we conclude that
\[
0\to\Delta(X)\otimes\Delta(Y)\to kX \otimes kY\to\Delta(X\amalg Y)\to 0
\]
is exact. This gives the desired short exact sequence.
\end{proof}

We are now ready to give a finite set of generators for $D^{\Omega} (G)$. A similar theorem was proved by Lassueur \cite[Lem. 12.1]{Lassueur-Dade}. Since we define $D^{\Omega} (G)$ using $G$-sets, the statements are slightly different, although the idea of the proof is the same.

\begin{proposition}\label{prop:Generates} 
Let $\cF_G$ denote the family of all non-Sylow $p$-subgroups of $G$ and let $\cF_G/G$ denote the set of $G$-conjugacy classes in $\cF_G$.
Then the set $\{\Omega_{G/ P}\ |\ [P] \in \cF_G/G \}$ generates $D^\Omega(G)$.
\end{proposition}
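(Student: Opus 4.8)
The plan is to start from an arbitrary generator $\Omega_X$, with $X$ a $G$-set such that $X^S=\emptyset$, and rewrite it step by step in terms of the syzygies $\Omega_{G/P}$ with $P$ a non-Sylow $p$-subgroup of $G$. First I would apply Corollary \ref{cor:Replace} to each orbit of $X$: replacing an orbit $[G/H]$ by $[G/T]$ for $T\in\Syl_p(H)$ does not change the class in $D^\Omega(G)$, and since $X^S=\emptyset$ no such $T$ can be a full Sylow $p$-subgroup of $G$, so after this reduction we may assume $X$ has all isotropy groups in $\cF_G$. Thus it suffices to show that for every $G$-set $X$ with isotropy in $\cF_G$, the class $\Omega_X$ lies in the subgroup generated by $\{\Omega_{G/P} : [P]\in\cF_G/G\}$.

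Next I would induct on the number of orbits of $X$ (or, more carefully, on $|X|$). If $X$ is a single orbit $[G/P]$ with $P\in\cF_G$, there is nothing to prove. Otherwise write $X=X_0\amalg X_1$ where $X_1$ is one orbit and $X_0$ is the rest, both with isotropy in $\cF_G$ (hence both $S$-fixed-point-free). Lemma \ref{lem:Product} gives
\[
\Omega_{X_0\amalg X_1}+\Omega_{X_0\times X_1}=\Omega_{X_0}+\Omega_{X_1},
\]
so $\Omega_X=\Omega_{X_0}+\Omega_{X_1}-\Omega_{X_0\times X_1}$. Here $\Omega_{X_0}$ is handled by the induction hypothesis ($X_0$ has fewer orbits), and $\Omega_{X_1}=\Omega_{G/P}$ is already a generator. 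For the term $\Omega_{X_0\times X_1}$, note $X_0\times X_1$ still has isotropy in $\cF_G$ — the stabilizer of a point of a product is contained in the stabilizer of either factor, hence contained in some non-Sylow $p$-subgroup. If this reduction is not automatically terminating, I would use Corollary \ref{cor:Reduction} to discard any orbit of $X_0\times X_1$ admitting an $S$-map into another orbit, and I would also invoke Lemma \ref{lem:FixedPoints} to replace $X_0\times X_1$ by a $G$-set whose orbits are of the form $G/Q$ with $Q$ ranging over the ``intersection pattern'' of stabilizers — each such $Q$ being a non-Sylow $p$-subgroup. In any case the key point is that all $G$-sets appearing have isotropy in $\cF_G$, so one can set up a well-founded induction (e.g.\ on cardinality, using that the $\Omega$-class of a $G$-set with isotropy in $\cF_G$ only depends on which fixed-point sets are nonempty, by Lemma \ref{lem:FixedPoints}, allowing one to always reduce to a ``minimal model'').

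The main obstacle I anticipate is making the induction genuinely well-founded: the product $X_0\times X_1$ can be larger than $X$, so a naive induction on $|X|$ does not close. The clean fix is to use Lemma \ref{lem:FixedPoints} up front to replace any $S$-fixed-point-free $G$-set $X$ with isotropy in $\cF_G$ by a canonical representative depending only on the collection $\{P\in\cF_G : X^P\neq\emptyset\}$ of subgroups with nonempty fixed points — for instance, the disjoint union $\coprod G/P$ over a suitable subfamily closed under the relevant intersections — and then induct on the size of that subfamily, which genuinely decreases when one peels off a maximal member. Once the bookkeeping is arranged so that each application of Lemma \ref{lem:Product} strictly simplifies the canonical invariant, the argument goes through and exhibits $\Omega_X$ as a $\ZZ$-linear combination of the $\Omega_{G/P}$, $[P]\in\cF_G/G$, proving that these finitely many elements generate $D^\Omega(G)$.
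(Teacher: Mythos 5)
Your proposal reaches the right conclusion with the right tools (Corollaries \ref{cor:Reduction}, \ref{cor:Replace}, Lemmas \ref{lem:Product}, \ref{lem:FixedPoints}), and you correctly diagnose the crux: after peeling off one orbit via Lemma \ref{lem:Product}, the cross term $X_0\times X_1$ can have more orbits and more elements than $X$, so a naive induction on $|X|$ or on orbit count does not terminate. Your proposed fix --- after using Corollary \ref{cor:Reduction} to discard any orbit whose stabilizer is $G$-subconjugate to that of another orbit, replace $X$ by the invariant $\Phi(X):=\{[P]\in\cF_G/G : X^P\neq\emptyset\}$ (legitimate by Lemma \ref{lem:FixedPoints}) and induct on $|\Phi(X)|$ --- does work, though you leave the verification to ``bookkeeping.'' The verification is short: once stabilizers are pairwise non-subconjugate, writing $X=(G/P_1)\amalg X_0$ gives $[P_1]\in\Phi(X)\setminus\Phi(X_0)$, so $|\Phi(X_0)|<|\Phi(X)|$; and since $(X_0\times G/P_1)^Q=X_0^Q\times(G/P_1)^Q$ one has $\Phi(X_0\times G/P_1)=\Phi(X_0)\cap\Phi(G/P_1)\subseteq\Phi(X_0)$, so that term also strictly drops. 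The base case $|\Phi(X)|=1$ forces all stabilizers trivial, whence $\Omega_X=\Omega_{G/1}$.

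The paper's proof takes a genuinely different route: it filters $D^\Omega(G)$ by the subgroups $D^\Omega(G)_i$ generated by relative syzygies of $G$-sets with point-stabilizers of order at most $p^i$, and runs a double induction --- outer on $i$, inner on the number of orbits $m$. After normalizing with Corollary \ref{cor:Reduction} so that stabilizers are pairwise non-subconjugate and writing $X=(G/P_1)\amalg X'$ with $|P_1|=p^k$ maximal, the crucial observation is that every stabilizer of $(G/P_1)\times X'$ is a \emph{proper} subgroup of a conjugate of $P_1$, hence has order at most $p^{k-1}$, pushing the cross term down a level of the filtration. So the paper trades your single induction on $|\Phi(X)|$ for a nested induction on (stabilizer order, orbit count). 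Both arguments are correct and of comparable length once spelled out; yours tracks a single numerical invariant, while the paper's filtration makes explicit the subgroup lattice structure that controls the process. One small point of care if you write yours up: state clearly that the induction is on $|\Phi(X)|$ and that you normalize by Corollary \ref{cor:Reduction} \emph{before} peeling off an orbit, since the strict decrease of $|\Phi(X_0)|$ depends on the stabilizers of distinct orbits being pairwise non-subconjugate.
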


\begin{proof}
By Corollary \ref{cor:Replace}, we may assume that each point-stabilizer of $X$ is a non-Sylow $p$-subgroup of $G$. If $|S|=p^n$, we filter $D^\Omega(G)$ by
\[
0:=D^\Omega(G)_{-1}\leq D^\Omega(G)_0\leq D^\Omega(G)_1\leq\ldots\leq D^\Omega(G)_{n-1}=D^\Omega(G),
\]
where for $0\leq i\leq n-1$ we define $D^\Omega(G)_i$ to be the subgroup generated by those relative syzygies $\Omega_X$ for $X$ a $G$-set whose every point-stabilizer is a $p$-subgroup of order at most $p^i$.  We will prove inductively that each $D^\Omega(G)_i$ is generated by elements of the form $\Omega_{G/P}$ with $P\in \cF_G$.

To begin, the generating set for $D^\Omega(G)_0$ is those relative syzygies $\Omega_X$ where $X\cong m\cdot[G/1]$ is a free $G$-set.  Corollary \ref{cor:Reduction} then gives $X=\Omega_{G/1}$. Thus we have established the base case. 

Suppose that we have established the result for $k-1$.  Let 
\[
X=\coprod \limits_{i=1}^m G/P_i
\]
be such that for each $i$, the $p$-subgroup of $P_i$ is of order at most $p^k$.  By Corollary \ref{cor:Reduction}, we may assume that there are no $S$-maps from $G/P_i$ to $G/P_j$ for $i\neq j$, as deleting the orbit $G/P_i$ does not affect the resulting relative syzygy in $D^\Omega(G)$. We now induct on $m$, the base case $m=1$ being the desired conclusion.

Write $X=(G/P_1) \amalg X'$ where $X'=\coprod\limits_{i=2}^m G/P_i$.  Lemma \ref{lem:Product} gives us
\[
\Omega_X=\Omega_{[(G/P_1) \amalg X']}=\Omega_{G/P_1}+\Omega_{X'}-\Omega_{[(G/P_1) \times X']}.
\]
By the second induction (on $m$), we have that $\Omega_{X'}$ is a sum of $\{ \Omega _{G/Q_j} \}$ for some $p$-subgroups $Q_j$. It therefore suffices to show that  $\Omega_{[(G/P_1) \times X']} $ can also be expressed as a sum of $\Omega _{G/Q_j}$ for some $p$-subgroups $Q_j $.

Consider the point-stabilizer of an element in $(G/P_1)\times X'$, which is the intersection of the point-stabilizers of the two component elements.  In other words, the point-stabilizer of $(gP_1,x')$ is $^gP_1\cap G_{x'}$.  From our assumption that there are no $S$-maps from $G/P_i$ to $G/P_j$ when $i\neq j$, we must have that $P_1$ is not $G$-subconjugate to any of the point-stabilizers of $X'$. Therefore the point-stabilizers of $(G/P_1) \times X'$ are all properly contained in conjugates of $P_1$, and thus have order at most $p^{k-1}$. This gives that $\Omega_{[ (G/P_1)\times X']}\in D^\Omega(G)_{k-1}$, and our inductive assumption on $k$ completes the proof.
\end{proof}


\section{Superclass functions and the Bouc homomorphism}\label{sect:Superclass}
 
Let $G$ be a finite group. A set of subgroups of $G$ is called a \emph{family}  if it is closed under $G$-conjugation and taking subgroups. We denote an arbitrary family of subgroups by $\cH$. As before, $\cF_G$ denotes the family  of all non-Sylow $p$-subgroups of $G$, and $\cF_p$ denotes the family of  all $p$-subgroups of $G$.  

\begin{definition}
A \emph{superclass} function of $G$ defined on a family $\cH$ is a function $f: \cH \to\ZZ$ that is constant on each $G$-conjugacy class.
We write $C(G, \cH)$ for the set of superclass functions defined on $\cH$. Note that $C(G, \cH)$ is an abelian group under pointwise addition. 
\end{definition}

\begin{remark} For every family $\cH$ of subgroups of $G$, the relative Burnside ring $B(G, \cH)$ is defined as the subring of the Burnside ring generated by those $G$-sets whose point-stabilizers lie in $\cH$.  The group of superclass functions $C(G, \cH)$  can be identified with the $\ZZ$-dual of the relative Burnside ring $B^{\vee} (G, \cH ):=\Hom (B(G, \cH ), \ZZ)$ by the assignment
\[
B^\vee(G,\cH)\to C(G,\cH):f\mapsto(H\mapsto f(G/H))
\]
for all $H\in\cH$. We sometimes use this identification without referring to it explicitly.
\end{remark}

Let $\cH/G$ denote the set of $G$-conjugacy classes of subgroups in $\cH$. There is a canonical $\ZZ$-basis $\{\delta_{[H]}\ |\ [H] \in \cH /G \}$ for $C(G, \cH)$, whose elements are defined by 
$$
\delta_{[H]} (K) =\begin{cases}
1\  \text{ if }\  [K]=[H] \\
0\  \text{ if }\ [K]\neq  [H]
\end{cases}
$$
for all $K\in\cH$. 

For every $G$-set $X$, let $\omega_X\in C(G, \cH)$ denote the superclass function defined by 
$$
\omega_X(K)=\begin{cases}
1 \  \text{ if } \ X^K\neq\emptyset \\
0 \ \text{ if } \ X^K=\emptyset
\end{cases}
$$
for all $K\in \cH$. We have the following observation due to Bouc  \cite[Lemma 2.2]{Bouc-Remark}.
 
\begin{lemma}\label{lem:Basis} 
Let $\cH$ be any family of subgroups in $G$.
Then the  set $\{\omega_{G/H}\ |\ [H] \in \cH /G\}$ is a $\ZZ$-basis for $C(G, \cH)$.
\begin{proof} For each $H \in \cH$, we can write 
\[
\omega_{G/H}=\sum \limits _{[K]\leq [H]}  \delta _{[K]},
\]
where the partial order is induced by $G$-subconjugacy. 
By totally ordering $\cH/G$ appropriately, it is easy to see that the matrix for the linear transformation that takes  $\{ \delta _{[K]} \}$ to $\{\omega _{G/H} \}$  can be made an upper triangular matrix with integral entries and all diagonal terms equal to 1. The inverse matrix must therefore be an integer matrix of the same form, and the result follows.
\end{proof}
\end{lemma}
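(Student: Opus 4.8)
The plan is to express each $\omega_{G/H}$ in terms of the canonical basis $\{\delta_{[K]}\mid [K]\in\cH/G\}$ and then show that the resulting transition matrix is invertible over $\ZZ$. The crucial identity is
\[
\omega_{G/H}=\sum_{[K]\leq[H]}\delta_{[K]},
\]
where $[K]\leq[H]$ means that $K$ is $G$-subconjugate to $H$, i.e.\ $K^{g}\leq H$ for some $g\in G$. To verify this I would evaluate both sides at an arbitrary $K\in\cH$: the left side equals $1$ precisely when $(G/H)^{K}\neq\emptyset$, and the standard description of fixed points of a transitive $G$-set gives $(G/H)^{K}\neq\emptyset$ if and only if $K$ is $G$-subconjugate to $H$; the right side equals $1$ precisely when $[K]\leq[H]$, directly from the definition of the $\delta$'s. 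The hypothesis that $\cH$ is a family enters here twice: it guarantees that every $K$ occurring in the sum, being a subgroup of a conjugate of $H$, actually lies in $\cH$, and that $G$-subconjugacy is meaningful on all of $\cH$.

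Next I would note that $G$-subconjugacy induces a genuine partial order on the finite set $\cH/G$: reflexivity and transitivity are immediate, and antisymmetry follows because $[K]\leq[H]\leq[K]$ forces $|K|=|H|$, so that $K^{g}\leq H$ with equal cardinalities yields $K^{g}=H$ and hence $[K]=[H]$. Choosing any total order on $\cH/G$ refining this partial order, the displayed identity shows that the matrix carrying the basis $\{\delta_{[K]}\}$ to the tuple $\{\omega_{G/H}\}$ is upper triangular with all diagonal entries equal to $1$. Such a matrix is invertible over $\ZZ$ (its inverse is again upper triangular with integer entries and unit diagonal), so $\{\omega_{G/H}\mid[H]\in\cH/G\}$ spans and is linearly independent over $\ZZ$, i.e.\ it is a $\ZZ$-basis of $C(G,\cH)$.

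I do not expect a serious obstacle: the content is entirely the fixed-point criterion for transitive $G$-sets together with the elementary linear algebra of unitriangular integer matrices. The only points requiring any care are confirming that subconjugacy descends to a well-defined partial order on conjugacy classes (using cardinality for antisymmetry) and identifying precisely where the ``family'' hypothesis on $\cH$ is used; both are routine.
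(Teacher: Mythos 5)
Your proof is correct and takes essentially the same route as the paper's: establish the identity $\omega_{G/H}=\sum_{[K]\leq[H]}\delta_{[K]}$ via the fixed-point criterion for transitive $G$-sets, refine subconjugacy to a total order, and conclude via the invertibility of an upper-unitriangular integer matrix. You simply supply more of the routine details (the fixed-point verification, antisymmetry of subconjugacy, the role of the family hypothesis) that the paper leaves implicit.
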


 It is easy to see that the superclass functions $\{ \omega_X\}$, as $X$ ranges over all $G$-sets, satisfy the following relations:

\begin{lemma}\label{lem:FixedRelations}
Let $X$ and $Y$ be $G$-sets. If for every subgroup $H \in \cH$, the fixed point set $X^H \neq \emptyset$ precisely when $Y^H \neq \emptyset$, then   $\omega_X=\omega_Y$ in $C(G, \cH)$.
\end{lemma}

\begin{proof} Immediate from the definitions of $\omega_X$ and $\omega_Y$.
\end{proof}

We also have a relation coming from a product of two $G$-sets.

\begin{lemma}\label{lem:ProductRelations} If $X$ and $Y$ are two $G$-sets, then
\[
\omega_{X\amalg Y}+\omega_{X\times Y}=\omega_X+\omega_Y
\]
in $C(G, \cH)$.
\end{lemma}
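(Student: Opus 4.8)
The plan is to verify the identity pointwise on the family $\cH$, as it is nothing more than inclusion--exclusion applied to fixed-point sets. Fix a subgroup $H\in\cH$; I will evaluate both sides of the asserted equality at $H$. The two elementary facts needed are that taking $H$-fixed points commutes with disjoint union and with products of $G$-sets, i.e.\ $(X\amalg Y)^H=X^H\amalg Y^H$ and $(X\times Y)^H=X^H\times Y^H$. From these it follows that $(X\amalg Y)^H\neq\emptyset$ precisely when at least one of $X^H$, $Y^H$ is nonempty, while $(X\times Y)^H\neq\emptyset$ precisely when both $X^H$ and $Y^H$ are nonempty.

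Now set $a:=\omega_X(H)$ and $b:=\omega_Y(H)$, each an element of $\{0,1\}$. The observations above translate into $\omega_{X\amalg Y}(H)=\max(a,b)$ and $\omega_{X\times Y}(H)=\min(a,b)$, so the identity at $H$ reduces to the numerical fact $\max(a,b)+\min(a,b)=a+b$, which one checks directly on the four possible values of the pair $(a,b)$. Since $H\in\cH$ was arbitrary, the superclass functions $\omega_{X\amalg Y}+\omega_{X\times Y}$ and $\omega_X+\omega_Y$ agree on all of $\cH$.

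There is no substantive obstacle here: the hypothesis that $\cH$ is a family plays no role beyond guaranteeing that the functions $\omega_{(-)}$ are well defined on $\cH$, and the content of the lemma is exactly the Boolean inclusion--exclusion identity written additively. (This is the combinatorial shadow of the corresponding relation $\Omega_{X\amalg Y}+\Omega_{X\times Y}=\Omega_X+\Omega_Y$ in $D^\Omega(G)$ proved in Lemma~\ref{lem:Product}, which is precisely why the Bouc homomorphism $\Psi_G$ will be well defined.)
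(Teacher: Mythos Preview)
Your proof is correct and follows essentially the same approach as the paper: both verify the identity pointwise at an arbitrary $H\in\cH$, using $(X\amalg Y)^H=X^H\amalg Y^H$ and $(X\times Y)^H=X^H\times Y^H$ and then checking the resulting $\{0,1\}$-arithmetic case by case. Your reformulation via $\max(a,b)+\min(a,b)=a+b$ is just a tidy way of packaging the paper's three-case analysis.
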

 
\begin{proof}
Fix some subgroup $H \in \cH$.
There are three cases to consider: both $X$ and $Y$ have an $H$-fixed-point, exactly one has a $H$-fixed-point, and neither has a $H$-fixed-point.  Since $(X\times Y)^H=X^H \times Y^H$ and $(X\amalg Y)^H=X^H \amalg Y^H$, these cases correspond respectively to:  both $X\times Y$ and $X\amalg Y$ have an $H$-fixed-point, $X\times Y$ has no $H$-fixed-point but $X\amalg Y$ does, and neither $X\times  Y$ nor $X\amalg Y$ has an $H$-fixed-point.  The result is now immediate.
\end{proof}

Note that the conclusions of Lemmas \ref{lem:FixedRelations} and \ref{lem:ProductRelations}  are the direct analogues of Lemmas \ref{lem:FixedPoints} and \ref{lem:Product},  with $\omega_X$ in place of $\Omega_X$. 
As before (cf. Corollaries \ref{cor:Reduction} and \ref{cor:Replace}), Lemma \ref{lem:FixedRelations} has the following consequences:

\begin{corollary}\label{cor:Relations2}
Let $\cH$ be a family of $p$-subgroups of $G$ and $X$ a finite $G$-set. 
\begin{enumerate}
\item\label{cor:Relations2_i} If there is a decomposition $X \cong X_0\amalg X_1$ such that 
there is an $S$-map $X_0 \to X_1$, then $\omega_X=\omega_{X_1}$ in $C(G, \cH)$. 
\item\label{cor:Relations2_ii}  If $Y$ is formed from $X$ by replacing each orbit $[G/H]$ by $[G/T]$ for $T\in\Syl_p(H)$, then $\omega_X=\omega_Y$ in $C(G,\cH)$. In particular, for any $H\in\cH$ and $T\in \Syl_p(H)$, we have $\omega _{G/H} =\omega_{G/T}$ in $C(G, \cH)$.  
\end{enumerate}
\end{corollary}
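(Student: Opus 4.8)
The plan is to deduce both assertions directly from Lemma~\ref{lem:FixedRelations}: in each case it suffices to verify, for every $K\in\cH$, that $X^K$ is nonempty precisely when the fixed-point set of the modified $G$-set is nonempty. The key reduction is that, since $\cH$ consists of $p$-subgroups, every $K\in\cH$ is $G$-conjugate to a subgroup of $S$; because all the superclass functions involved are constant on $G$-conjugacy classes, it is enough to check the fixed-point condition for a representative $K\leq S$ of each class. This reduction is precisely the analogue of the one underlying Corollaries~\ref{cor:Reduction} and~\ref{cor:Replace}.

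For part~\ref{cor:Relations2_i}, I would first record the decomposition $X^K=X_0^K\amalg X_1^K$, so that $X^K\neq\emptyset$ holds exactly when $X_0^K\neq\emptyset$ or $X_1^K\neq\emptyset$; thus it suffices to show $X_0^K\neq\emptyset\implies X_1^K\neq\emptyset$ for $K\leq S$. This is where the hypothesis is used: an $S$-map $X_0\to X_1$ is in particular $K$-equivariant for $K\leq S$, hence restricts to a map $X_0^K\to X_1^K$, which forces $X_1^K\neq\emptyset$ whenever $X_0^K\neq\emptyset$. Therefore $X^K\neq\emptyset$ if and only if $X_1^K\neq\emptyset$ for all $K\in\cH$, and Lemma~\ref{lem:FixedRelations} gives $\omega_X=\omega_{X_1}$.

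For part~\ref{cor:Relations2_ii}, I would note that $X^K\neq\emptyset$ if and only if some orbit of $X$ has a nonempty $K$-fixed-point set, and likewise for $Y$; since $Y$ is obtained by replacing each orbit $[G/H]$ by $[G/T]$ with $T\in\Syl_p(H)$, it then suffices to prove $(G/H)^K\neq\emptyset\iff(G/T)^K\neq\emptyset$ for every $p$-subgroup $K\leq S$, and applying this with $X=G/H$ is exactly the ``in particular'' clause. Here I would use that $(G/H)^K\neq\emptyset$ precisely when $K$ is $G$-subconjugate to $H$, that a $p$-subgroup of $G$ is $G$-subconjugate to $H$ if and only if it is $G$-subconjugate to a fixed Sylow $p$-subgroup $T$ of $H$ (all Sylow $p$-subgroups of $H$ being $H$-conjugate, and $T\leq H$), and then invoke Lemma~\ref{lem:FixedRelations} once more.

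I do not anticipate a genuine obstacle: the single point requiring care is not to treat an $S$-map as a $G$-map in part~\ref{cor:Relations2_i} --- the argument genuinely relies on restricting to fixed points under subgroups of $S$, which is legitimate only after conjugating the given $K\in\cH$ into $S$, and this is exactly what the $p$-subgroup hypothesis on $\cH$ licenses. Everything else is routine bookkeeping with fixed-point sets, entirely parallel to the proofs of Corollaries~\ref{cor:Reduction} and~\ref{cor:Replace} in the Dade-group setting.
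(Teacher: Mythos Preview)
Your proposal is correct and follows essentially the same approach as the paper, which simply states that the corollary is a consequence of Lemma~\ref{lem:FixedRelations} in direct analogy with Corollaries~\ref{cor:Reduction} and~\ref{cor:Replace}. Your argument spells out exactly those details---conjugating $K$ into $S$ to exploit the $S$-map in part~\ref{cor:Relations2_i}, and using Sylow's theorem to compare $G$-subconjugacy into $H$ and into $T$ in part~\ref{cor:Relations2_ii}---which is precisely what the paper leaves implicit.
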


The assignment $\omega_{G/P}\mapsto\Omega_{G/P}$ induces the group homomorphism
\[
\overline\Psi_G:C(G,\cF_G)\to D^\Omega(G). 
\]
As  $\{\omega_{G/P}\ |\ [P]\in\cF_p/G\}$ forms a free $\ZZ$-basis for $C(G,\cF_p)$, this rule gives a well-defined map, and since 
$\{\Omega_{G/P}\ |\ [P]\in\cF_G/G\}$ forms a (nonfree) generating set of $D^\Omega(G)$ by Proposition \ref{prop:Generates}, $\overline\Psi_G$ is a surjection. To extend this homomorphism to $C(G, p)$ we declare $\Omega _{G/S}=0$ and define $$\Psi _G  : C(G, p) \to D^{\Omega } (G)$$ 
to be the homomorphism that sends  $\omega _{G/P}$ to $\Omega _{G/P}$ for every $[P] \in \cF_p/G$. This homomorphism is still well-defined and surjective.
Since $\Psi _G$ is a generalization of the homomorphism defined for $p$-groups by Bouc in \cite[Theorem 1.7]{Bouc-Remark}, we refer to it as the \emph{Bouc homomorphism}.

The declaration of $\Omega _{G/S}=0$ suggests that we can extend the definition of $\Omega_X$ to arbitrary $G$-sets $X$ as follows:

\begin{definition} For any $G$-set $X$, set $$\Omega _X=\begin{cases} [\Delta(X)]  &\text{  if   } X^S = \emptyset, \\ 0 & \text{ if } X^S\neq \emptyset.  \end{cases}$$ 
\end{definition}

Note that this definition is consistent with the convention that is used when $G=S$ is a $p$-group.  In this earlier situation, when $|X^S|>1$ we have $\Delta(X)$ is a capped endo-permutation module with cap $k$, so we have $\Omega_X=[\Delta(X)]=0$ in $D(X)$ by definition. However if $|X^S|=1$ then $\Delta(X)$ is not a capped endo-permutation module.  In this case $\Omega_X$ was taken to be trivial by convention.

In the general finite group situation, if $X^S\neq\emptyset$ then $\Delta(X)$ may contain non-isomorphic Sylow-vertex summands, and hence it may not be a Dade module.  Our choice of setting $\Omega_X=0$ in $D^\Omega(G)$ when $X^S\neq\emptyset$ is therefore not only consistent with the $p$-group situation, it is the only reasonable generic option available.   

\begin{remark} In the non-$p$-group case one should be careful when using this convention, as it is sometimes possible that $\Delta(X)$ may be a Dade module with $[\Delta (X)]\neq 0$ even though $X^S\neq\emptyset$. In this case we still have $\Omega_X=0$. For example, when $G=C_2$ and $p\neq 2$ we have $\Delta (G/1) \cong k(-1) $ and $[k(-1)]\neq 0$ in $D(G)$, however we still take $\Omega _{G/1}=0$. In fact, for this group we have $D_k (C_2) \cong \ZZ/2$ but $D^{\Omega } _k (C_2)=0$ when $k$ is a field of odd characteristic.
\end{remark}

With this convention we have the following theorem.

\begin{proposition}\label{pro:BoucHom} The Bouc homomorphism $$\Psi _G: C(G, p) \to D ^{\Omega} (G)$$  sends $\omega_X$ to $\Omega_X$ for every $G$-set $X$.
\end{proposition}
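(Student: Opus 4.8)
The plan is to show that the homomorphism $\Psi_G$, which by construction is defined by its values on the canonical basis elements $\omega_{G/P}$ with $[P]\in\cF_p/G$, actually satisfies $\Psi_G(\omega_X)=\Omega_X$ for an \emph{arbitrary} $G$-set $X$. By Lemma \ref{lem:Basis} the elements $\omega_{G/P}$ form a $\ZZ$-basis of $C(G,p)$, so any $G$-set $X$ has a unique expression $\omega_X=\sum_{[P]}a_P\,\omega_{G/P}$ with $a_P\in\ZZ$. Since $\Psi_G$ is a homomorphism, $\Psi_G(\omega_X)=\sum_{[P]}a_P\,\Omega_{G/P}$, so the content of the proposition is the identity $\Omega_X=\sum_{[P]}a_P\,\Omega_{G/P}$ in $D^\Omega(G)$, i.e. that the $\Omega$-generators satisfy the \emph{same} linear relations in $D^\Omega(G)$ that the $\omega$-generators satisfy in $C(G,p)$.

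First I would reduce to the case where every point-stabilizer of $X$ is a $p$-subgroup: by Corollary \ref{cor:Relations2}\ref{cor:Relations2_ii} we have $\omega_X=\omega_Y$ where $Y$ replaces each orbit $G/H$ by $G/T$ with $T\in\Syl_p(H)$, and by Corollary \ref{cor:Replace} the corresponding identity $\Omega_X=\Omega_Y$ holds in $D^\Omega(G)$ as well (using the convention $\Omega_{G/H}=0$ when $T\in\Syl_p(G)$, which matches $\omega_{G/H}=\omega_{G/S}$ being killed in $C(G,p)$). So from now on $X$ has isotropy in $\cF_p$, and I must express $\Omega_X$ in terms of the $\Omega_{G/P}$. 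The natural induction is on the number of orbits of $X$, mirroring the proof of Proposition \ref{prop:Generates}: write $X=(G/P_1)\amalg X'$, apply Lemma \ref{lem:Product} to get $\Omega_X=\Omega_{G/P_1}+\Omega_{X'}-\Omega_{(G/P_1)\times X'}$, and apply the parallel Lemma \ref{lem:ProductRelations} to get $\omega_X=\omega_{G/P_1}+\omega_{X'}-\omega_{(G/P_1)\times X'}$ in $C(G,p)$. Because Lemmas \ref{lem:Product} and \ref{lem:ProductRelations}, and Lemmas \ref{lem:FixedPoints} and \ref{lem:FixedRelations}, are exact analogues of one another, every manipulation used to rewrite $\omega_X$ as an integer combination of the basis elements $\omega_{G/P}$ can be performed verbatim on $\Omega_X$, and the resulting identity in $D^\Omega(G)$ is exactly $\Psi_G(\omega_X)=\Omega_X$.

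More precisely, I would argue as follows. Both $C(G,p)$ and $D^\Omega(G)$ come equipped with generating families indexed by $\cF_p/G$ (respectively $\cF_G/G$, with the $S$-indexed generator being $0$ in $D^\Omega(G)$), namely $\{\omega_{G/P}\}$ and $\{\Omega_{G/P}\}$. Lemmas \ref{lem:FixedPoints} and \ref{lem:Product} show that the assignment $[X]\mapsto\Omega_X$ on $G$-sets with $p$-isotropy descends through the relations ``$\omega_X=\omega_Y$ when $X,Y$ have the same fixed-point pattern'' and ``$\omega_{X\amalg Y}+\omega_{X\times Y}=\omega_X+\omega_Y$''; these two families of relations, by the proof of Lemma \ref{lem:Basis} and of Proposition \ref{prop:Generates}, suffice to reduce any $\omega_X$ to a $\ZZ$-linear combination of basis elements $\omega_{G/P}$ by a finite sequence of steps. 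Running the identical sequence of steps with $\Omega$ in place of $\omega$, which is legitimate precisely because each step is justified by a lemma whose $\Omega$-counterpart has been proved, yields $\Omega_X=\sum a_P\,\Omega_{G/P}=\Psi_G(\sum a_P\,\omega_{G/P})=\Psi_G(\omega_X)$.

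I expect the only real subtlety to be bookkeeping around the boundary convention: one must check that the step ``delete an orbit $G/P$ with $P\in\Syl_p(G)$'' is compatible on both sides — on the $C(G,p)$ side because $\omega_{G/S}$ is the class that gets modded out in passing from $C(G,\cF_p)$ to $C(G,p)$, and on the $D^\Omega(G)$ side because $\Omega_{G/S}$ was \emph{defined} to be $0$, and more generally because replacing $G/H$ by $G/T$ may produce Sylow-isotropy orbits that must be discarded. Corollaries \ref{cor:Reduction}, \ref{cor:Replace}, \ref{cor:Relations2} handle exactly this, so the argument goes through; the verification that the reduction procedures on the two sides can be kept in lockstep is the one place requiring care rather than routine substitution.
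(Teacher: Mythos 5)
Your proof is correct and follows essentially the same route as the paper: dispose of the case $X^S\neq\emptyset$ via the convention $\Omega_X=0$, reduce to $p$-subgroup isotropy via Corollary~\ref{cor:Replace} and Corollary~\ref{cor:Relations2}, and then run the double induction from Proposition~\ref{prop:Generates} (first on the maximal order of a point-stabilizer, then on the number of orbits), using the parallel Lemmas~\ref{lem:Product}/\ref{lem:ProductRelations} and \ref{lem:FixedPoints}/\ref{lem:FixedRelations} to keep the $\omega$- and $\Omega$-reductions in lockstep. One small clarification: the induction is not merely on the number of orbits (the term $(G/P)\times X'$ may have many more orbits than $X$); the outer induction on the maximal size of point-stabilizers is essential to make the term $\Omega_{(G/P)\times X'}$ fall into the previously established range, and $\omega_{G/S}$ is not ``modded out'' of $C(G,p)$ but rather is a basis element of $C(G,p)$ sent to $0$ by $\Psi_G$ by the defining convention $\Omega_{G/S}=0$.
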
 

\begin{proof} If $X$ is a $G$-set such that $X^S \neq \emptyset$, then $\omega _X : \cF_p \to \ZZ$ is the constant superclass function with value $1$. 
In particular, we have $\omega _X=\omega _{G/S}$. Then $\Psi _G (\omega _X)=\Psi _G (\omega _{G/S} ) =0 $. Since in this case  by definition $\Omega _X=0$, we have obtained the desired conclusion $\Psi _G (\omega_X)=\Omega _X$. 

Now assume that $X^S =\emptyset$. By Corollaries \ref{cor:Replace} and \ref{cor:Relations2}\ref{cor:Relations2_ii}, we may take the point-stabilizers of $X$ to be $p$-subgroups.
As in the proof of Proposition \ref{prop:Generates}, we induct on the maximum order of the point-stabilizers of $X$. 
In the case where this order is $1$, we must have that $X$ is a free $G$-set, so $X=m\cdot[G/1]$. We then have $\Omega_X=\Omega_{G/1}$ and $\omega_X=\omega_{G/1}$ by Corollaries \ref{cor:Reduction}  and  \ref{cor:Relations2}\ref{cor:Relations2_i}, respectively, so it follows that $\Psi_G(\omega_X)=\Omega_X$ by the definition of $\Psi_G$. We have established our base case.

Suppose now that the result holds for all $G$-sets with point-stabilizers $p$-groups of order less than $p^k$.  Among all $G$-sets having a point-stabilizer  of order $p^k$, we induct on the number of orbits $m$, the case $m=1$ being a transitive $G$-set and therefore covered by the definition of $\Psi_G$. 

Now let $X$ be such a $G$-set with $m$ orbits and assume the result has been proved for all values less than $m$.  If there are $G$-orbits $[G/P]$ and $[G/Q]$ in $X$ such that $P$ is $G$-subconjugate to $Q$, and if $X'$ is $X$ with one copy of $[G/P]$ deleted, Corollaries \ref{cor:Reduction} and \ref{cor:Relations2}\ref{cor:Relations2_i} give $\Omega_X=\Omega_{X'}$ and $\omega_X=\omega_{X'}$, respectively, in which case the result is proved by the inductive hypothesis on $m$.  We may therefore assume that no two orbits of $X$ have point-stabilizers that are comparable by the $G$-subconjugacy relation. 

Finally, we may write $X=[G/P]\amalg X'$ where $|P|=p^k$ and $X'$ has $m-1$ orbits, each of which has point-stabilizers of order at most $p^k$ and is not $G$-subconjugate to $P$.  Writing
\[
\Omega_X=\Omega_{G/P}+\Omega_{X'}-\Omega_{[(G/P)\times X']}\qquad\textrm{and}\qquad
\omega_X=\omega_{G/P}+\omega_{X'}-\omega_{[(G/P)\times X']}
\]
by Lemmas \ref{lem:Product} and \ref{lem:ProductRelations}, we note that we already have $\Psi_G(\omega_{G/P})=\Omega_{G/P}$ by definition and $\Psi_G(\omega_{X'})=\Omega_X'$ by our induction on $m$.  On the other hand, the assumption that no point-stabilizer of $X'$ is $G$-subconjugate to $P$ implies that all point-stabilizers of $(G/P) \times X'$ are of order at most $p^{k-1}$, so we also have $\Psi_G(\omega_{[(G/P)\times X']})=\Omega_{[(G/P)\times X']}$ by the induction assumption.  As $\Psi_G$ is linear, we have proved $\Psi_G(\omega_X)=\Omega_X$, thereby completing the induction and the proof.
\end{proof}
 
Note that  Proposition \ref{pro:BoucHom} completes the proof of Theorem \ref{thm:IntroBoucHom}.

One of the consequences of the existence of the Bouc homomorphism is that any equation that hold for $\omega_X$ also holds 
for $\Omega_X$.  Using this, we can conclude the following:

\begin{corollary} Let $G$ be a finite group and $X$ be a $G$-set. Then 
$$\Omega _X =\sum _{  \substack{[Q], [P] \in \cF_p /G,\\ Q\leq _G P,\ X^P\neq \emptyset} } \mu_G (Q, P ) \Omega _{G/Q} $$
in $D^{\Omega} (G)$, where $\mu_G$  denotes the M\" obius function of the poset of $G$-conjugacy classes of $p$-subgroups in $G$.
\end{corollary}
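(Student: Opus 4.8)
The plan is to derive the stated formula as a formal consequence of the Bouc homomorphism $\Psi_G$ of Proposition \ref{pro:BoucHom}, by first establishing the identity at the level of superclass functions in $C(G,p)$ and then applying $\Psi_G$. The key observation is that $\Psi_G$ is a group homomorphism sending $\omega_X$ to $\Omega_X$ for \emph{every} $G$-set $X$, so any $\ZZ$-linear relation among the $\omega_X$ is automatically transported to the corresponding relation among the $\Omega_X$ in $D^\Omega(G)$.

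First I would work in $C(G,p)$ and recall from Lemma \ref{lem:Basis} and its proof that $\{\omega_{G/P} \mid [P]\in\cF_p/G\}$ is a $\ZZ$-basis, related to the canonical basis $\{\delta_{[P]}\}$ by the unitriangular change of basis $\omega_{G/P}=\sum_{[Q]\leq_G[P]}\delta_{[Q]}$. Möbius inversion over the poset of $G$-conjugacy classes of $p$-subgroups then gives $\delta_{[P]}=\sum_{[Q]\leq_G[P]}\mu_G(Q,P)\,\omega_{G/Q}$. Next, for an arbitrary $G$-set $X$ I would expand $\omega_X$ in the canonical basis: by definition $\omega_X(K)=1$ exactly when $X^K\neq\emptyset$, and since $X^P\neq\emptyset$ implies $X^Q\neq\emptyset$ for every $Q\leq_G P$, the function $\omega_X$ is the indicator of the down-closed set of those $[P]$ with $X^P\neq\emptyset$; hence $\omega_X=\sum_{[P]:\,X^P\neq\emptyset}\delta_{[P]}$. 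Substituting the Möbius-inversion formula for each $\delta_{[P]}$ and interchanging the order of summation yields
\[
\omega_X=\sum_{\substack{[Q],[P]\in\cF_p/G\\ Q\leq_G P,\ X^P\neq\emptyset}}\mu_G(Q,P)\,\omega_{G/Q}
\]
in $C(G,p)$.

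Finally I would apply the Bouc homomorphism $\Psi_G:C(G,p)\to D^\Omega(G)$ to both sides. By Proposition \ref{pro:BoucHom} we have $\Psi_G(\omega_X)=\Omega_X$ and $\Psi_G(\omega_{G/Q})=\Omega_{G/Q}$, and $\Psi_G$ is additive, so the displayed identity in $C(G,p)$ immediately gives the claimed identity in $D^\Omega(G)$. I do not anticipate a serious obstacle here: the only point requiring a little care is the bookkeeping in the double sum — making sure the condition attached to $[P]$ is "$X^P\neq\emptyset$" (not "$X^Q\neq\emptyset$") and that the inner variable $Q$ ranges over $p$-subgroups with $Q\leq_G P$ — together with the routine check that the poset of $G$-conjugacy classes of $p$-subgroups, ordered by $G$-subconjugacy, is indeed a finite poset on which Möbius inversion applies. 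All of this is standard once the expansion $\omega_X=\sum_{X^P\neq\emptyset}\delta_{[P]}$ is in hand, which is the real content of the argument.
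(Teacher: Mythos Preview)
Your proposal is correct and follows essentially the same approach as the paper: establish the identity for $\omega_X$ in $C(G,p)$ via M\"obius inversion, then push it through $\Psi_G$ using Proposition~\ref{pro:BoucHom}. The paper simply cites \cite[2.3, 2.6]{Bouc-Remark} for the $\omega_X$ identity, whereas you spell out the inversion explicitly, but the strategy is identical.
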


\begin{proof} Repeating the arguments given in \cite[2.3, 2.6]{Bouc-Remark}, one can  easily see that the above  formula holds for $\omega_X$, and hence it also holds for 
$\Omega_{X}$.
\end{proof}


\section{Moore $G$-spaces with non-Sylow $p$-subgroup isotropy}\label{sect:GMoore}
 
 In this section we consider Moore $G$-spaces. We show that if all the point-stabilizers of a Moore $G$-space $X$ are non-Sylow $p$-subgroups of $G$, then the top reduced homology group of $X$ with coefficients in $k$ is a Dade $kG$-module (Theorem \ref{thm:Moore}). This result will be extended in the next section to all Moore $G$-spaces, with minor modifications. All this is ultimately aimed at showing that the dimension function of a $k$-oriented real representation sphere is in the kernel of the Bouc homomorphism.

Let $G$ be a finite group and $\cH$ a family of subgroups of $G$ (closed under conjugation and taking subgroups). The \emph{orbit category of $G$ with respect to the family $\cH$} is the category $\cO_{\cH} (G)$ whose objects are the transitive $G$-sets $\{G/H\ |\ H\in\cH\}$ and whose morphisms from $G/H$ to $G/K$ are given by the $G$-maps $G/H \to G/K$. When the group $G$ is clear from the context, we will write $\OH$ for $\cO _{\cH} (G)$. 
 
Let $R$ be a commutative ring with unity. An \emph{$R\OH$-module} $M$ is a contravariant functor from the category $\OH$ to the category of $R$-modules. The value of an $R\OH$-module $M$ at $G/H$ is written $M(H)$. By identifying $\Aut _{\OH } (G/H)$ with $\overline N_G (H) :=N_G(H)/H$, we view the $R$-module $M(H)$ as an $R \overline N_G(H)$-module. In particular, $M(1)$ is an $RG$-module. Note that for every $H\in \cH$, the subgroup $H$ acts trivially on $M(H)$. 
For more information on modules over orbit categories,  
we refer the reader to \cite[\S 9, \S 17]{Lueck-Book} and \cite{HPY}.

\begin{definition}\label{def:Free} For a $G$-set $X$, the $R\OH$-module $R[X^?]$ is defined by the rule
\[
G/H\mapsto R[X^H] \cong R\Hom_G(G/H,X),
\]
where $X^H$ denotes the $\overline{N}_G(H)$-set of $H$-fixed points. 
\end{definition}

An $R\OH$-module is called \emph{free} if it is isomorphic to a direct sum of modules of the form $R[(G/K)^?]$ with $K \in \cH$. By the Yoneda Lemma, every free $R\OH$-module is projective (see \cite[Section 2A]{HPY}). 


More generally, given a $G$-CW-complex $X$, we can associate to it a chain complex of $R\OH$-modules.

\begin{definition}
Let $X$ be a finite $G$-CW-complex.  For each $i\geq 0$, consider the $R\OH$-module
\[
C_i(X^?;R): G/H\mapsto C_i(X^H;R),
\]
where $C_i( X^H; R)$ denotes the free $R$-module whose basis is the set of $i$-dimensional cells in the $H$-fixed subspace $X^H$.  These $R\OH$-modules are connected by the CW-boundary maps to yield a chain complex of $R\OH$-modules
$$C_* (X^?, R) : \cdots \to C_i (X^? ; R) \maprt{\partial _i} C_{i-1} (X^? ; R) \to \dots \to C_0 (X^? ; R) \to 0.$$
\end{definition}

Note that for each $H \in \cH$, the complex $C_* (X^H; R)$ is the chain complex of the subspace $X^H$. This gives an interpretation of $C_* (X^?; R)$ as a functor from the category $\OH$ to the category of chain complexes over $R$. 

For each $i\geq 0$, $C_i (X; R) \cong R[X_i ^?]$ where $X_i$ denotes the $G$-set of all $i$-dimensional cells in $X$. From this it is easy to see that the chain complex of a $G$-CW-complex $X$ is a chain complex of free $R\OH$-modules if all point-stabilizers of $X$ lie in $\cH$.

Let $\underline{R}$ denote the constant functor with values $\underline{R} (H)=R$ for every $H \in \cH$.
For each $G$-map $f: G/K \to G/H$  the induced map $f^* \colon \underline{R}(H) \to \underline{R} (K)$ is taken to be the  identity map $\id : R \to R$. For every free $R\OH$-module $R[X^?]$ there is an augmentation map $\varepsilon : R[X^?] \to \underline R$, defined object-wise by the augmentation map $R[X^H ] \to R$.

If $C_* (X^?; R)$ is a chain complex of free $R\OH$-modules, then there is an augmentation map $C_0 (X^?; R) \to \underline R$. Appending this map to the chain complex $C_* (X ^?; R)$ gives the augmented complex $\widetilde C_* (X^?; R)$ whose $i$-th homology is an $R\OH$-module. This $R\OH$-module will be written $\widetilde H_i (X^? ; R)$, and is called the \emph{$i$-th reduced homology module of $X$}.  
 
Recall that a $G$-CW-complex is called \emph{finite} if it has finitely many cells.
 
\begin{definition}\label{def:MooreComplex} 
A \emph{Moore $G$-space over $R$, relative to the family $\cH$,} is a finite $G$-CW-complex $X$ such that for every $H\in\cH$ there is exactly one $i\geq 0$ such that $\widetilde H_i(X^H;R)\neq 0$. If $X$ is a Moore $G$-space then there is a superclass function $\underline n:\cH\to\ZZ$ such that $\widetilde H_i(X^H;R)$ vanishes for all $i\neq \underline{n}(H)$. We sometimes record these data by referring to $X$ as an \emph{$\underline n$-Moore $G$-space}.
\end{definition}

In our applications, we often need the homological dimension of the fixed point subspace $X^H$ to coincide with its geometric dimension as a $CW$-complex, denoted by $\dim (X^H)$. We define the following condition for Moore $G$-spaces to guarantee this.

\begin{definition}
The $\underline n$-Moore $G$-space $X$ is \emph{tight} if $\underline n(H)=\dim (X^H)$ for all $H\in\cH$.
\end{definition}

We have many examples of tight Moore $G$-spaces. 

\begin{example} Let $V$ be a real $G$-representation and $X:=S(V)$ the unit sphere in $V$ with respect to some $G$-equivariant norm. The $G$-space $X$ is a smooth $G$-manifold, which can be triangulated to obtain a $G$-CW-complex. In this case $X^H=S(V^H)$ is a sphere for every $H\leq G$, hence $\widetilde H_i (X^H; R) =0$ when $i \neq \dim X^H$. Thus $X$ is a tight $\underline{n}$-Moore $G$-space over $R$ where $\underline{n} (H)=\dim_{\RR} (V^H )-1$.    
\end{example}


\begin{example} Let $X$ be a finite $G$-set, considered as a zero dimensional $G$-CW-complex. The augmented complex $\widetilde C_* (X^?; R)$ is  of the form
$$0 \to  R[X^?] \maprt{\varepsilon}\underline R \to 0$$ where $\varepsilon$ is the augmentation map. If $X^H=\emptyset $, then $\widetilde H_i ( X^H; R) \cong R$ for $i=-1$ and vanishes in all other dimensions. If $X^H\neq \emptyset$, then we have a short exact sequence $$0 \to \Delta (X^H) \to k[X^H ] \to R \to 0,$$
so the reduced homology of $X^H$ is concentrated in dimension 0 and $\widetilde H_0 (X^H; R) \cong \Delta (X^H)$ as an $R \overline N_G(H)$-module. We conclude that $X$ is a tight $\underline{n}$-Moore $G$-space over $R$, where $\underline{n}=\omega_X-1$.  
\end{example}

\begin{definition}\label{def:DimFunctionMoore} The \emph{dimension function} of an $\underline{n}$-Moore space $X$ is the superclass function $\Dim (X) : \cH \to \ZZ$ defined by 
$\Dim (X) (H)=\underline{n} (H) +1$. 
\end{definition}

For every finite $G$-set $X$, the superclass function $\omega_X$ is realized as the dimension function of the discrete $G$-space $X$.
To realize sums of multiple such $\omega_X$, one can use the join construction for $G$-Moore spaces.

Given two $G$-CW-complexes $X$ and $Y$, the join $X\ast Y$ is defined as the quotient space $X\times Y \times [0,1] /\sim $ with the identifications $(x,y,1)\sim (x',y,1)$ and $(x,y,0)\sim (x, y', 0)$ for all $x,x'\in X$ and $y,y'\in Y$. The $G$-action on $X\ast Y$ is given by $g(x,y,t)=(gx, gy, t)$ for all $x\in X$, $y\in Y$, and $t\in [0,1]$.  The join $X*Y$ has a natural $G$-CW-complex structure, once we assume that the topology on the product  is the compactly generated topology 
(see \cite[Section 4]{Yalcin-Moore} for more details).

\begin{lemma}\label{lem:DimOfJoin} If $X$ and $Y$ are Moore $G$-spaces, then $X\ast Y$ is a Moore $G$-space with dimension function $\Dim (X*Y)=\Dim(X)+\Dim(Y)$. Moreover if $X$ and $Y$ are tight Moore $G$-spaces, then $X\ast Y$ is also tight. 
\end{lemma}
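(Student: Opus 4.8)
The plan is to reduce the statement to standard homological properties of the topological join, applied on fixed-point subspaces. First I would record the elementary but essential fact that taking $H$-fixed points commutes with the join: for every $H\in\cH$ there is an equality of $G$-CW-subcomplexes
\[
(X\ast Y)^H=X^H\ast Y^H .
\]
This is immediate from the defining quotient $X\times Y\times[0,1]/\!\sim$, since $g\in G$ fixes the class of $(x,y,t)$ exactly when it fixes the coordinates that survive the identifications ($x$ and $y$ when $t\in(0,1)$, only $x$ when $t=0$, only $y$ when $t=1$). Under the $G$-CW structure on the join, the cells of $X^H\ast Y^H$ are the cells of $X^H$, the cells of $Y^H$, and the joins $e\ast e'$ of a $p$-cell $e$ of $X^H$ with a $q$-cell $e'$ of $Y^H$, this last type being a $(p+q+1)$-cell.

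The one non-formal ingredient is the Künneth formula for a join: for CW-complexes $A,B$ there is a natural isomorphism
\[
\widetilde H_n(A\ast B;k)\;\cong\;\bigoplus_{p+q=n-1}\widetilde H_p(A;k)\otimes_k\widetilde H_q(B;k),
\]
any $\Tor$-corrections vanishing because $k$ is a field. (A self-contained derivation comes from the Mayer--Vietoris sequence for the decomposition $A\ast B=(CA\times B)\cup_{A\times B}(A\times CB)$, where $CA\times B\simeq B$ and $A\times CB\simeq A$, together with the ordinary Künneth theorem for $A\times B$; this is how the $p$-group case is handled in \cite[Section 4]{Yalcin-Moore}, and the argument makes no use of the ambient group being a $p$-group.) Applying this with $A=X^H$ and $B=Y^H$ and using that $\widetilde H_\ast(X^H;k)$ is concentrated in degree $\un{n}_X(H)$ while $\widetilde H_\ast(Y^H;k)$ is concentrated in degree $\un{n}_Y(H)$, the sum on the right has at most one nonzero summand; it occurs in degree $n=\un{n}_X(H)+\un{n}_Y(H)+1$, where it is $\widetilde H_{\un{n}_X(H)}(X^H;k)\otimes_k\widetilde H_{\un{n}_Y(H)}(Y^H;k)$, a tensor product over $k$ of nonzero $k$-vector spaces and hence nonzero. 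Thus every $X^H\ast Y^H$ is a Moore space with $\un{n}_{X\ast Y}(H)=\un{n}_X(H)+\un{n}_Y(H)+1$, so $X\ast Y$ is a Moore $G$-space, and
\[
\Dim(X\ast Y)(H)=\un{n}_{X\ast Y}(H)+1=\bigl(\un{n}_X(H)+1\bigr)+\bigl(\un{n}_Y(H)+1\bigr)=\Dim(X)(H)+\Dim(Y)(H).
\]

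For the tightness assertion, suppose $\un{n}_X(H)=\dim X^H$ and $\un{n}_Y(H)=\dim Y^H$ for all $H$. From the cell structure on the join recorded above, $\dim(X^H\ast Y^H)=\dim X^H+\dim Y^H+1$, which equals $\un{n}_X(H)+\un{n}_Y(H)+1=\un{n}_{X\ast Y}(H)$; hence $X\ast Y$ is again tight. I do not anticipate a genuine obstacle here: the only care required is bookkeeping around degenerate fixed-point sets — using the conventions $\dim\emptyset=-1$ and $\widetilde H_{-1}(\emptyset;k)\cong k$ so that both the join Künneth formula and the dimension count remain valid in those cases — and noting that the join Künneth isomorphism is natural enough to respect the residual $\overline N_G(H)$-actions on the homology, which it is.
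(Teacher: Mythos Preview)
Your proof is correct and follows the same approach as the paper: reduce to fixed-point subspaces via $(X\ast Y)^H=X^H\ast Y^H$, then invoke the fact that the join of Moore spaces is a Moore space. The paper's proof is a terse two-line version of yours, leaving the K\"unneth computation and the tightness bookkeeping implicit where you have spelled them out.
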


\begin{proof} We have  $(X\ast Y)^H = X^H \ast Y^H$ for every $H \leq G$. Since the join of two Moore spaces is a Moore space the result follows.
\end{proof}

As a consequence we obtain:
 
\begin{proposition}\label{pro:JoinGSet}
Let $X$ be the join of a finite set of finite $G$-sets $\{X_i\}$.  Then $X$ is a tight Moore $G$-space and  $\Dim (X)= \sum _i \omega _{X_i}$. 
In particular, every superclass function $f=\sum a_P \cdot \omega _{G/P}$ with $a_P\geq 0$ can be realized as the dimension function of a tight Moore $G$-space.
\end{proposition}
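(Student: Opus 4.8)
The plan is to build the desired tight Moore $G$-space directly as an iterated join. Given a finite set of finite $G$-sets $\{X_i\}_{i=1}^m$, each $X_i$ is a tight Moore $G$-space when regarded as a discrete ($0$-dimensional) $G$-CW-complex: indeed, as computed in the example preceding Definition \ref{def:DimFunctionMoore}, for any $H\leq G$ the reduced homology $\widetilde H_*(X_i^H;R)$ is concentrated in a single dimension (in dimension $-1$ if $X_i^H=\emptyset$, in dimension $0$ if $X_i^H\neq\emptyset$), and the unique nonvanishing dimension equals $\dim(X_i^H)$ in the extended sense used for Moore spaces, so $X_i$ is tight with $\Dim(X_i)=\omega_{X_i}$.

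Next I would apply Lemma \ref{lem:DimOfJoin} inductively. Since the join of two Moore $G$-spaces is again a Moore $G$-space whose dimension function is the sum of the two dimension functions, and since the join of two tight Moore $G$-spaces is tight, it follows by induction on $m$ that $X:=X_1*X_2*\cdots*X_m$ is a tight Moore $G$-space with
\[
\Dim(X)=\sum_{i=1}^m \Dim(X_i)=\sum_{i=1}^m \omega_{X_i}.
\]
Here one uses that the join is (suitably) associative as a $G$-CW-complex; I would invoke the $G$-CW structure on joins recalled in the paragraph preceding Lemma \ref{lem:DimOfJoin} (cf. \cite[Section 4]{Yalcin-Moore}), and the identity $(X*Y)^H=X^H*Y^H$ together with the standard fact that the join of a sphere-or-empty-set with a sphere-or-empty-set is again of that type, which is exactly what makes the homology stay concentrated in one dimension.

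For the final sentence, suppose $f=\sum_{[P]\in\cF_p/G} a_P\cdot\omega_{G/P}$ with all $a_P\geq 0$. Take the $G$-set $X_{[P]}$ to be the disjoint union of $a_P$ copies of $G/P$ for each conjugacy class $[P]$ with $a_P>0$, so that $\omega_{X_{[P]}}=a_P\cdot\omega_{G/P}$ by the definition of $\omega$ (the fixed-point condition $X_{[P]}^K\neq\emptyset$ is unaffected by taking multiple copies of an orbit, and is recorded as a $0/1$ value; alternatively one may simply write out $a_P$ separate orbit-factors in the join). Then the join of the finitely many $G$-sets $\{X_{[P]}\}$, or equivalently the join of $\sum_P a_P$ copies of the various $G/P$, is a tight Moore $G$-space with dimension function $\sum_P a_P\cdot\omega_{G/P}=f$, as required.

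The main obstacle I anticipate is purely bookkeeping rather than conceptual: one must be careful that the extended notion of dimension function (with the $+1$ shift of Definition \ref{def:DimFunctionMoore}) behaves additively under joins even in the degenerate cases where some fixed-point sets are empty, i.e. that $\Dim(X*Y)(H)=\Dim(X)(H)+\Dim(Y)(H)$ continues to hold when $X^H=\emptyset$ or $Y^H=\emptyset$ (so that $X^H*Y^H$ is empty or equals $Y^H$ or $X^H$ respectively); this is exactly the content of Lemma \ref{lem:DimOfJoin}, so once that lemma is in hand the proposition is immediate, and no further work is needed.
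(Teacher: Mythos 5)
Your main argument is correct, but it takes a genuinely different (more abstract) route than the paper. You invoke Lemma \ref{lem:DimOfJoin} inductively, relying on the $G$-CW structure on joins established in the preceding discussion and cited to \cite[Section 4]{Yalcin-Moore}. The paper instead constructs an explicit $G$-simplicial model of $\ast_i X_i$ in one shot: it puts a $G$-poset structure on $\amalg_i X_i$ by declaring $x\leq y$ when $x\in X_i$, $y\in X_j$ with $i<j$, and observes that the realization of this poset is $G$-homeomorphic to the iterated join. That explicit model immediately displays the $G$-set of $j$-simplices, which makes the formula $\Dim(X)=\sum_i\omega_{X_i}$ transparent and also serves as a useful concrete description elsewhere. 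Your inductive argument buys you nothing new here, but it is valid and in fact closer to how one would reason abstractly from Lemma \ref{lem:DimOfJoin}.

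There is, however, a bookkeeping error in your treatment of the final sentence. You set $X_{[P]}$ to be the disjoint union of $a_P$ copies of $G/P$ and assert $\omega_{X_{[P]}}=a_P\cdot\omega_{G/P}$. This is false: $\omega_Y$ is a $\{0,1\}$-valued function recording only whether $Y^K$ is nonempty, so $\omega_{X_{[P]}}=\omega_{G/P}$ regardless of $a_P$. Consequently, the join of the $\{X_{[P]}\}$ (one join factor per conjugacy class) has dimension function $\sum_{[P]:\,a_P>0}\omega_{G/P}$, not $f$, and the phrase ``or equivalently'' in your last paragraph is incorrect. The parenthetical alternative you offer --- taking $a_P$ \emph{separate} copies of $G/P$ as distinct join factors, so that the join is over $\sum_P a_P$ factors total --- is the construction that actually works, and is exactly what the paper's simplicial model encodes. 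You should drop the disjoint-union construction and the claim $\omega_{X_{[P]}}=a_P\cdot\omega_{G/P}$, and state only the correct version.
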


\begin{proof} 
We can define a $G$-simplicial complex structure on $X$ using the following:
Define a relation on $\amalg _i X_i$ by declaring $x\leq y$ if $x\in X_i$ and $y \in X_j$ for some $i<j$. The topological realization of this poset
is a $G$-simplicial complex which is homeomorphic to the join $\ast_i X_i$. It is clear that the dimension function of  $X$ is the function $\sum _i \omega _{X_i}$. 
\end{proof}

For a Moore $G$-space $X$ constructed as the join of $G$-sets $\{ X_i \}$, 
the augmented simplicial chain complex $\widetilde C_* (X ^?; R) $ is the tensor product of chain complexes $$0 \to R[X_i^?] \to \underline R \to 0.$$  
Hence the homology of  the complex $\widetilde C_* (X; R) $ is isomorphic to the tensor product  $\bigotimes _i \Delta (X_i)$ as an $RG$-module.
If all isotropy subgroups of the $G$-sets $X_i$ are non-Sylow $p$-subgroups and $R=k$ is a field of characteristic $p>0$, then by 
Proposition \ref{pro:RelativeDade}, each $\Delta (X_i)$ is a capped Dade module, hence in this case the reduced homology of $X$ is a Dade $kG$-module whose class in the Dade group is the sum $\sum _i \Omega_{X_i}$. 

This observation generalizes to all Moore $G$-spaces satisfying the same isotropy subgroup condition.

\begin{theorem}\label{thm:Moore} Let $G$ be a finite group and $k$ be a field of characteristic $p>0$. Suppose that $X$ is 
an $n$-dimensional tight Moore $G$-space over $k$ such that all the point-stabilizers of $X$ are non-Sylow $p$-subgroups. 
Let $X_i$ denote the $G$-set of $i$-dimensional cells of $X$. Then  $\widetilde H_{n} (X, k)$ is a capped Dade 
$kG$-module, and $$[\widetilde H_{n} (X; k)]=\sum _{i=1}^n \Omega _{X_i}$$ in $D ^{\Omega} (G)$.
\end{theorem}

\begin{proof} We follow the argument given in \cite{Yalcin-Moore} with some modifications. Let $\cH=\cF_G$ be the family of all non-Sylow $p$-subgroups in $G$. Let $\un{n}: \cH \to \ZZ $ be a superclass function and $X$ be an $\underline{n}$-Moore $G$-space with isotropy in $\cH$. The augmented chain complex $\widetilde C_* (X^?; k)$ is a chain complex of free $k\OH$-modules of the form
$$ 0 \to k[X_n^?]\maprt{\bd_n}  \cdots \maprt{}  k[X_i ^?] \maprt{\partial_i} k[X_{i-1} ^?] \maprt{} \cdots \maprt{} k[X_0 ^?] \maprt{\bd_0 } \underline k \to 0.$$
The evaluation of this sequence at the trivial subgroup $Q=1$ gives an exact sequence of $kG$-modules
$$ 0 \to \widetilde H_n (X; k) \to k[X_n]\maprt{\bd_n}  \cdots \maprt{}  k[X_i] \maprt{\partial_i} k[X_{i-1}] \maprt{} \cdots \maprt{} k[X_0] \maprt{\bd _0} k \to 0.$$

If we show that for each $i \geq 0$, the short exact sequence
\begin{equation}\label{eqn:X_iSequence}
0 \to \ker \partial _i \to k[X_i] \to \mathrm{im}\, \partial _i \to 0 
\end{equation}
is $X_i$-split then the conclusion will follow from repeated applications of Lemma \ref{lem:ShortExact}. To show that the sequence in (\ref{eqn:X_iSequence}) is $X_i$-split, we consider the chain complex of $R\OH$-modules  
\[
\bD:= 0 \to k[X_i ^?] \otimes k[X_i ^?] \xrightarrow{\partial _i \otimes id}  k[X_{i-1} ^? ]\otimes k[X_i^?] \to \cdots \to k[X_0 ^?] \otimes k[X_i ^? ] \to k[X_i ^? ]\to 0
\]
obtained by first tensoring the augmented complex $\widetilde C_* (X^? ; k)$ with the free module $k[X_i ^?]$ and then truncating at dimension $i$. Note that for each $j\geq 0$, the $k\OH$-module 
$$\bD_j = k[X_j ^?]\otimes k[X_i ^?] \cong k[(X_j \times X_i)^?]$$ 
is free, hence $\bD$ is a chain complex of projective $k\OH$-modules.  

We claim that $\bD$ has no homology in dimensions strictly less than $i$. For each $Q \in \cH$ such that $X_i ^Q=\emptyset$, the complex $\bD(Q)$ is identically zero. If $Q \in \cH$ is such that $X_i ^Q \neq \emptyset$, then $\underline{n} (Q)= \dim X^Q \geq i$ by the tightness of $X$. This implies that $\widetilde H_j (X^Q ; R)=0$ for all $j< i$, meaning that the chain complex 
$$ 0 \to k[X_i ^Q] \maprt{\partial _i}  k[X_{i-1} ^Q ] \to \cdots \to k[X_0 ^Q] \to k \to 0$$    
has no homology  in dimensions $j< i$. Since tensoring an exact chain complex with $k[X_i ^Q]$ over $k$ does not change the exactness of a sequence,  we conclude that $$H_j (\bD) (Q)=H_j (\bD(Q))=0$$ for all $j<i$ and $Q \in \cH$. Hence the $k \OH$-module $H_j (\bD)$ vanishes 
for all $j<i$.

To complete the proof, observe that the condition $H_j (\bD)=0$ for $j<i$ implies that $\mathrm{im} (\partial _i \otimes \id )$ is a projective $k\OH$-module.
Hence the short exact sequence
$$0 \to \ker (\partial _i \otimes \id) \to k[X_i ^?] \otimes k[X_i ^?] \to \mathrm{im} (\partial _i \otimes \id ) \to 0$$
splits as an exact sequence of $k\OH$-modules. This implies that the sequence  
$$0 \to \ker (\partial _i)  \otimes k[X_i]  \to k[X_i] \otimes k[X_i] \to \mathrm{im} (\partial _i) \otimes k[X_i] \to 0$$
splits as a sequence of $kG$-modules. Hence for each $i$, the sequence 
$$ 0 \to \ker \bd _i \to k[X_i] \to \mathrm{im} (\bd _i ) \to 0$$
is $X_i$-split.
This completes the proof.
\end{proof}

A $G$-CW-complex is called \emph{full} if for every $H \leq G$, we have $C_i (X^H)\neq 0$ whenever $i \leq \dim (X^H)$. This property always holds when $X$ is a $G$-simplicial complex. By the equivariant simplicial approximation theorem, every $G$-CW-complex $X$ is $G$-homotopy equivalent to a $G$-simplicial complex, and hence $G$-homotopy equivalent to a full $G$-CW-complex $Y$. Recall that two $G$-spaces $X, Y$ are \emph{$G$-homotopy equivalent} if there are $G$-maps $f: X\to Y$ and $f': Y \to X$ such that the compositions $f' \circ f$ and $f \circ f'$ are $G$-homotopic  to the identity maps. 

For a full complex we can prove the following.

\begin{lemma}\label{lem:DimSum} Let $X$ be a tight Moore $G$-space of dimension $n$, relative to a family $\cH$. Suppose that $X$ is a full complex, and  $X_i$ denotes the $G$-set of $i$-dimensional cells of $X$.  Then $$\Dim(X)=\sum _{i=0} ^n \omega _{X_i}$$ in $C(G, \cH)$.
\end{lemma}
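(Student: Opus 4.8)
The plan is to establish the identity of superclass functions by evaluating both sides at an arbitrary subgroup $H\in\cH$ and comparing. Recall that for a $G$-CW-complex $X$ the fixed-point subspace $X^H$ is a subcomplex whose $i$-cells are precisely the $H$-fixed $i$-cells of $X$; in the notation of Definition~\ref{def:Free} this is exactly the statement that $C_i(X^H;\ZZ)$ has $\ZZ$-basis $X_i^H=(X_i)^H$. In particular $X_i^H\neq\emptyset$ holds if and only if $X^H$ carries a cell of dimension $i$, which by the definition of $\omega_{X_i}$ is exactly the condition $\omega_{X_i}(H)=1$. Hence $\bigl(\sum_{i=0}^n\omega_{X_i}\bigr)(H)$ equals the number of integers $i$ with $0\le i\le n$ for which $X^H$ has an $i$-cell.

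Next I would feed in the two hypotheses. Since every cell of $X^H$ is a cell of $X$, we have $\dim(X^H)\le n$, so restricting the count to $0\le i\le n$ discards nothing. Fullness says $C_i(X^H)\neq0$ whenever $i\le\dim(X^H)$, while $C_i(X^H)=0$ for $i>\dim(X^H)$ by the very definition of the dimension of a complex; combining these, $X_i^H\neq\emptyset$ precisely when $0\le i\le\dim(X^H)$. Therefore $\bigl(\sum_{i=0}^n\omega_{X_i}\bigr)(H)=\dim(X^H)+1$ when $X^H\neq\emptyset$, and equals $0$ when $X^H=\emptyset$ (with the convention $\dim(\emptyset)=-1$, so that the formula $\dim(X^H)+1$ is uniform in both cases). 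On the other side, tightness of the Moore $G$-space $X$ is by definition the assertion $\underline{n}(H)=\dim(X^H)$, so $\Dim(X)(H)=\underline{n}(H)+1=\dim(X^H)+1$ by Definition~\ref{def:DimFunctionMoore}. The two evaluations coincide for every $H\in\cH$, hence $\Dim(X)=\sum_{i=0}^n\omega_{X_i}$ in $C(G,\cH)$.

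I do not expect a genuine obstacle here: the content is essentially bookkeeping. The only points deserving care are the identification of the $i$-cells of $X^H$ with $X_i^H$ (which is built into the definition of the chain complex $C_\ast(X^?;R)$ and is recorded in Definition~\ref{def:Free}), the degenerate case $X^H=\emptyset$ together with the $(-1)$-dimensional convention, and---the one step with any substance---rephrasing the fullness condition in the serviceable form ``$X_i^H\neq\emptyset$ if and only if $0\le i\le\dim(X^H)$'', after which the tightness hypothesis has been arranged precisely so that the counted quantity $\dim(X^H)+1$ matches $\Dim(X)(H)$.
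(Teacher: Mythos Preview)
Your proposal is correct and follows essentially the same approach as the paper: evaluate both sides at an arbitrary $H\in\cH$, use fullness to identify the set of indices $i$ with $X_i^H\neq\emptyset$ as exactly $\{0,\dots,\dim X^H\}$, and then invoke tightness to match the count $\dim(X^H)+1$ with $\Dim(X)(H)$. The paper's proof is more terse and leaves the roles of fullness and tightness implicit, whereas you spell them out and also handle the degenerate case $X^H=\emptyset$ explicitly; the arguments are otherwise the same.
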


\begin{proof} Fix $H \in \cH$. The sum $\sum _i \omega _{X_i} (H)$ is the number of indices $i$ such that $X_i ^H \neq \emptyset$. Since $X_i ^H \neq \emptyset$ if and only if $i$ satisfies $0 \leq i \leq \dim X^H$, we obtain that $$\sum _{i=0} ^n  \omega _{X_i} (H)=1+\dim (X^H)=\Dim (X) (H).$$
\end{proof}
 
 We can define a group of Moore $G$-spaces for a finite group $G$, as was done in \cite{Yalcin-Moore} for $p$-groups.  
  
 \begin{definition}\label{defn:equivalence} 
The Moore $G$-spaces $X$ and $Y$ are \emph{equivalent}, denoted $X \sim Y$, if  $X$ and $Y$ are $G$-homotopy equivalent.   The equivalence class of a Moore $G$-space $X$ will be written $[X]$.
\end{definition}

It is easy to show that if $X \sim X'$ and $Y\sim Y'$, then $X* Y \sim X' * Y'$. Hence the join operation defines an addition of the equivalence classes of Moore $G$-spaces given by $[X]+[Y]=[X*Y]$. Note that if the isotropy subgroups $X$ and $Y$ lie in $\cH$, then the isotropy subgroups of $X*Y$ also lie in $\cH$. The set of equivalence classes of Moore $G$-spaces with isotropy in $\cH$ with this addition operation is a commutative monoid, so we can apply the Grothendieck construction to define the group of Moore $G$-spaces. 

\begin{definition}\label{defn:GroupMoore}
Let $G$ be a finite group and $k$ be a field of characteristic $p>0$. The \emph{group of tight Moore $G$-spaces} $\cM _t (G, \cH)$ is  the Grothendieck group of $G$-homotopy classes of tight Moore $G$-spaces with isotropy in $\cH$ with addition defined by 
$[X]+[Y] :=[X \ast Y]$. 
\end{definition}

Note that since every $G$-CW-complex $X$ is $G$-homotopy equivalent to a full $G$-CW-complex we can define a homomorphism 
\[
\Dim : \cM _t (G, \cH) \to C(G, \cH): [X]-[Y] \mapsto\Dim (X) -\Dim (Y).
\]
We call this the \emph{dimension homomorphism}. 
Note that the dimension homomorphism is surjective since $C(G, \cH)$ is generated by the $\{ \omega_X \}$, and $\omega _X$ is the dimension function of a discrete $G$-space $X$, which is a tight Moore $G$-space.

As before let $\cF_G$ denote the family of all non-Sylow $p$-subgroups of $G$. By Theorem \ref{thm:Moore}, if $X$ is a tight Moore $G$-space relative to the family $\cF_G$, then its reduced homology is a Dade module. Using this, we can define an homomorphism 
\[
\Hn: \cM _t (G, \cF_G ) \to D^{\Omega} (G):[X]-[Y]\mapsto[\widetilde H_n (X; k) ]-[\widetilde H_m (Y; k)],
\]
where $n$ and $m$ are the dimensions of $X$ and $Y$.  Note that if $[X_1]-[Y_1] =[X_2]-[Y_2]$, then there is a tight Moore $G$-space $Z$ such that $$X_1\ast Y_2 \ast Z \cong X_2 \ast Y_1 \ast Z.$$  Therefore we have $\Hn ([X_1])-\Hn ([Y_1] ) =\Hn ([X_2]) - \Hn ([Y_2] ) $ in $D^{\Omega } (G)$. This shows that $\Hn$ is a well-defined homomorphism.
 
\begin{proposition}\label{pro:DependsOnly} There is a factorization $$\Hn= \Psi_G \circ \Dim$$
relating the Bouc homomorphism $\Psi_G:C(G,\cF_G)\to D^\Omega(G)$ to the maps $\Dim$ and $\Hn$ defined above.
\end{proposition}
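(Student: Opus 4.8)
The plan is to use that $\Hn$ and $\Psi_G\circ\Dim$ are both group homomorphisms $\cM_t(G,\cF_G)\to D^\Omega(G)$, so it suffices to check they agree on a generating set of the Grothendieck group $\cM_t(G,\cF_G)$, namely on the classes $[X]$ of tight Moore $G$-spaces $X$ with isotropy in $\cF_G$. Fix such an $X$. By the equivariant simplicial approximation theorem, $X$ is $G$-homotopy equivalent to a full $G$-CW-complex $Y$, which may be taken to have isotropy still in $\cF_G$; since $\Dim$ and $\Hn$ are already defined on all of $\cM_t(G,\cF_G)$ and $[X]=[Y]$ there, I may replace $X$ by $Y$ and assume from the start that $X$ is full.

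Write $n=\dim X$ and let $X_i$ be the $G$-set of $i$-dimensional cells of $X$. Now I would quote the three ingredients already in place: Theorem~\ref{thm:Moore} gives $\Hn([X])=[\widetilde H_n(X;k)]=\sum_i\Omega_{X_i}$ in $D^\Omega(G)$; Lemma~\ref{lem:DimSum} gives $\Dim(X)=\sum_i\omega_{X_i}$ in $C(G,\cF_G)$; and Proposition~\ref{pro:BoucHom} gives $\Psi_G(\omega_{X_i})=\Omega_{X_i}$ for every $i$. Applying the homomorphism $\Psi_G$ to the second identity and invoking the third, $\Psi_G(\Dim(X))=\sum_i\Psi_G(\omega_{X_i})=\sum_i\Omega_{X_i}=\Hn([X])$. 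Since $\Hn$ and $\Psi_G\circ\Dim$ are additive and agree on a generating set, they coincide, which is the asserted factorization.

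The essential work is already done in Theorem~\ref{thm:Moore}, where the chain complex of the full Moore complex is unwound and, via the Relative Schanuel Lemma applied to the $X_i$-split short exact sequences $0\to\ker\partial_i\to k[X_i]\to\mathrm{im}\,\partial_i\to 0$, the top homology class is identified with $\sum_i\Omega_{X_i}$; the present proof is then just assembly. The one place that needs care is making sure the sums $\sum_i\Omega_{X_i}$ (from Theorem~\ref{thm:Moore}) and $\sum_i\omega_{X_i}$ (from Lemma~\ref{lem:DimSum}) run over exactly the same index set, so that $\Psi_G$ can be applied term by term; I expect reconciling those index ranges, together with confirming that the passage to a full model leaves $\Hn([X])$ unchanged, to be the only obstacle — and a minor one.
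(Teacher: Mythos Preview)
Your proposal is correct and follows essentially the same approach as the paper, whose proof is simply ``Follows from Theorem~\ref{thm:Moore} and Lemma~\ref{lem:DimSum}.'' You have merely unpacked this, adding the explicit invocation of Proposition~\ref{pro:BoucHom} to pass from $\omega_{X_i}$ to $\Omega_{X_i}$, which the paper leaves implicit. Your concern about reconciling the index ranges is well-founded: the paper's statement of Theorem~\ref{thm:Moore} writes $\sum_{i=1}^n\Omega_{X_i}$ while Lemma~\ref{lem:DimSum} has $\sum_{i=0}^n\omega_{X_i}$, but inspection of the proof of Theorem~\ref{thm:Moore} shows the sum there should also begin at $i=0$ (the first application of Lemma~\ref{lem:ShortExact} is to $0\to\ker\partial_0\to k[X_0]\to k\to 0$), so this is a typo in the paper rather than a gap in your argument.
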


\begin{proof} Follows from Theorem \ref{thm:Moore} and Lemma \ref{lem:DimSum}.
\end{proof}


\section{Capped Moore $G$-spaces}\label{sect:CappedMoore}

For our applications we also need to consider Moore $G$-spaces with arbitrary isotropy. The unit spheres of real representations are important examples of  Moore $G$-spaces and in general they do not have $p$-subgroup isotropy. A real representation sphere may also have nonempty $S$-fixed-points.

Throughout this section we will work with the family $\cF_p$ of all $p$-subgroups of $G$. We denote the orbit category over $\cF_p$ by $\cO_p(G)$ to simplify the notation.

\begin{definition} Let $\un{n}: \cF_p \to \ZZ$ be a superclass function and $X$ an $\un{n}$-Moore $G$-space over $k$. Let $m:=\un{n} (S)$. We say $X$ is a \emph{capped} Moore space if the reduced homology  $\widetilde H_m ( X^S ; k)$, considered as an $\overline N_G(S)$-module, has a trivial component $k$. 
\end{definition}

Note that if $X$ is a Moore $G$-space whose point-stabilizers are all non-Sylow $p$-subgroups, then $X^S=\emptyset$. In this case we have 
$m=-1$ and $\widetilde H_m (X ^S ; k)\cong k$, so that $X$ is a capped Moore $G$-space.

The join of two capped Moore $G$-spaces is a capped Moore $G$-space, so the $G$-homotopy classes of capped (tight) Moore $G$-spaces form an additive monoid under the operation $[X]+[Y]:=[X\ast Y]$. Let $\cM_t (G, \cF_p )$ denote the Grothendieck group of $G$-homotopy classes of capped tight Moore $G$-spaces over the field $k$. Note that, while our definition of Moore $G$-space allows for non-$p$-group isotropy, we only require that $X^H$ be a Moore space when $H$ is a $p$-group.
 
The dimension function of a capped Moore $G$-space can be defined as in Definition \ref{def:DimFunctionMoore}, 
yielding the group homomorphism $$\Dim : \cM_t (G, \cF_p)\to C(G, p).$$
The composition of $\Dim$ with the Bouc homomorphism $\Psi_G : C(G, p) \to D^{\Omega} (G)$ gives the group homomorphism 
$$\Hn: \cM _t (G, \cF_p) \to D^{\Omega} (G).$$ The homology of a capped Moore space may not be a Dade $kG$-module, 
so it is not possible to explain this new homomorphism using the assignment $X\mapsto [H_n (X^?; k) ]$ as in the previous 
section. However it is still possible to give an interpretation of this homomorphism in terms of the homology of a Moore $G$-space. 
For this we first prove a lemma, which can be interpreted as a generalization of Lemma \ref{lem:ShortExact}.

\begin{lemma}\label{lem:ShortExact2}
Let $X$ be a $G$-set such that $X^S=\emptyset$, and let $0\to L\to k X\to N\to 0$ be an $X$-split 
short exact sequence of $kG$-modules. If $N$ is an endo-$p$-permutation module, then $L$ is as well.  
Moreover, if $N$ has a capped Dade module summand $N_1$, then $L$ has a summand $L_1$, 
also a capped Dade module, that satisfies $[L_1]=\Omega_X+[N_1]$ in $D(G)$.
\end{lemma}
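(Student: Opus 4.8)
The plan is to follow the proof of Lemma \ref{lem:ShortExact} almost verbatim, the point being that the two isomorphisms extracted there via the Relative Schanuel's Lemma (Lemma \ref{lem:Schanuel}) never actually used that $N$ or $L$ was a Dade module; they used only that the relevant sequences are $X$-split with relatively $X$-projective middle terms, which continues to hold here.

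For the first assertion I would repeat the computation in the proof of Lemma \ref{lem:ShortExact}: tensor $0\to L\to kX\to N\to 0$ with $L^\ast$, dualize it and tensor with $N$, apply Lemma \ref{lem:Schanuel} to the two resulting $X$-split sequences, and then add $kX\otimes N^\ast$ to both sides using the $X$-splitness of the dual sequence $0\to N^\ast\to kX\to L^\ast\to 0$. This yields
\[
\End(L)\oplus(kX\otimes N)\oplus(kX\otimes N^\ast)\cong\End(N)\oplus\End(kX).
\]
Now $\End(N)$ is a $p$-permutation module since $N$ is endo-$p$-permutation, and $\End(kX)\cong k[X\times X]$ is a permutation module; hence the direct summand $\End(L)$ of the right-hand side is again a $p$-permutation module, so $L$ is endo-$p$-permutation.

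For the second assertion, the same application of Lemma \ref{lem:Schanuel}, now to $0\to L\to kX\to N\to 0$ and to $0\to\Delta(X)\otimes N\to kX\otimes N\to N\to 0$ (the latter obtained by tensoring the $X$-split sequence $0\to\Delta(X)\to kX\to k\to 0$ with $N$), gives, with no Dade hypothesis required,
\[
L\oplus(kX\otimes N)\cong kX\oplus(\Delta(X)\otimes N).
\]
Write $N=N_1\oplus N_2$ with $N_1$ the prescribed capped Dade summand. Since $X^S=\emptyset$, the module $\Delta(X)$ is a capped Dade module by Proposition \ref{pro:RelativeDade}, hence so is $\Delta(X)\otimes N_1$; put $L_1:=Cap(\Delta(X)\otimes N_1)$, an indecomposable module with vertex $S$. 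By Lemma \ref{lem:CapEquiv}\ref{lem:CapEquiv_i}, $L_1$ is itself a capped Dade module with $[L_1]=[\Delta(X)\otimes N_1]=\Omega_X+[N_1]$ in $D(G)$. It then remains to check that $L_1\mid L$. Because $X^S=\emptyset$, every component of the permutation module $kX$ has a non-Sylow vertex, and therefore (relative projectivity being inherited by tensor products) so does every component of $kX\otimes N$. Since $L_1$ has vertex $S$, it is isomorphic to no component of $kX$ or of $kX\otimes N$, so comparing multiplicities of $L_1$ on the two sides of the displayed isomorphism forces $L_1$ to occur as a summand of $L$. This proves the lemma.

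The argument introduces nothing genuinely new beyond Lemma \ref{lem:ShortExact}; the only place requiring a little care is the final Krull--Schmidt comparison, where the hypothesis $X^S=\emptyset$ is used twice --- once to guarantee that $\Delta(X)$, hence $\Delta(X)\otimes N_1$, is a capped Dade module, and once to guarantee that the auxiliary terms $kX$ and $kX\otimes N$ have no Sylow-vertex components and so cannot absorb the cap $L_1$. I do not anticipate any real obstacle.
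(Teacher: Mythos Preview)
Your proposal is correct and follows essentially the same approach as the paper: both proofs recycle the two Schanuel-type isomorphisms from Lemma~\ref{lem:ShortExact} to obtain $\End(L)\oplus(kX\otimes N)\oplus(kX\otimes N^\ast)\cong\End(N)\oplus\End(kX)$ and $L\oplus(kX\otimes N)\cong kX\oplus(\Delta(X)\otimes N)$, and then locate $L_1=Cap(\Delta(X)\otimes N_1)$ inside $L$ via the Krull--Schmidt argument that $kX\otimes N$ has no Sylow-vertex components. The only cosmetic difference is that you explicitly cite Proposition~\ref{pro:RelativeDade} and Lemma~\ref{lem:CapEquiv}\ref{lem:CapEquiv_i}, which the paper leaves implicit.
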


\begin{proof} Repeating the argument in Lemma \ref{lem:ShortExact} we see that 
\[
\End(L)\oplus(k X\otimes N)\oplus(k X\otimes N^\ast)\cong\End(N)\oplus\End(k X).
\]
As $N$ is an endo-$p$-permutation module, $\End(L)$ is 
a $p$-permutation module, and thus $L$ is an endo-$p$-permutation module.

Assume now that  $N \cong N_1 \oplus N_2$ where $N_1$ is a  capped Dade $kG$-module.  
Applying the Relative Schanuel's Lemma to the sequences
\[
\xymatrix@=1.5em{
0 \ar[r]&L\ar[r]&k X\ar[r]&N\ar[r]\ar@{=}[d]&0\\
0\ar[r]&\Delta (X)\otimes N\ar[r]&k X\otimes N\ar[r]&N\ar[r]&0
}
\]
we obtain
\[
L\oplus(k X\otimes N)\cong k X\oplus(\Delta(X)\otimes N) .
\]
Now $\Delta (X) \otimes N_1$ is a capped Dade module that appears as a summand on the right hand side, so
the cap of $\Delta (X) \otimes N_1$ is isomorphic to a component on the left hand side. 
Since all components of $kX \otimes N$ have non-Sylow vertices,
$cap (\Delta (X) \otimes N_1)$ must be isomorphic to a summand of $L$, say $L_1$. Then $L_1$ is 
a capped Dade module such that $$[L_1]=[\Delta (X) \otimes N_1]=\Omega _X + [N_1],$$ 
completing the proof.
\end{proof}


In the proof of the next proposition we make use of the Brauer quotient construction, which we now recall.
Given a $kG$-module $M$ and a $p$-subgroup
$P \leq G$, we denote by $M^P$ the submodule of $P$-fixed elements in $M$. For every $Q\leq P$, we have the relative trace map
$tr_Q^P : M^Q \to M^P$ defined by $m \mapsto \sum _{xQ\in P/Q} x\cdot m$. The quotient 
$$M[P]:=M^P/ \sum _{Q<P } tr _Q ^P (M^Q)$$
is an $\overline N_G(P)$-module, called the \emph{Brauer quotient of $M$ at $P$}.
The quotient map $M^P \to M[P]$ will be written $Br_P$.
It is easy to see that the Brauer quotient of the permutation module $k[X]$ at $P$ is isomorphic to $k[X^P]$ as a $k[\overline N_G(P)]$-module (see \cite[1.1]{Broue}). 

Bouc \cite{Bouc-Resolutions} studied the connection between Brauer quotients and modules over the orbit category $\cO_p (G)$. Let $M$ be a $k\cO_p (G)$-module. For every $p$-subgroup $P \leq G$, 
there is a restriction homomorphism $\res^P _1: M(P) \to M(1)$,  which commutes with the maps induced by $G$-conjugation.
Since $P$ acts trivially on $M(P)$, this implies that $\res^P _1 (M(P))\subseteq M(1)^P$, and thus we can compose with the quotient map $Br_P$ to obtain the homomorphism
\[
\brres_P : M(P) \xrightarrow{\res^P_1} M(1) ^P  \xrightarrow{Br_P} M(1)[P].
\]
We will need the following result due to Bouc.

\begin{proposition}[\cite{Bouc-Resolutions}, Proposition 6.5]\label{pro:BoucResolutions}  A $k\cO_p(G)$-module $M$ has a finite projective 
resolution if and only if $M(1)$  is a $p$-permutation $kG$-module and the map $\brres_P: M(P) \to M(1)[P]$
is an isomorphism of $k\overline N_G(P)$-modules for every $p$-subgroup $P \leq G$. 
\end{proposition}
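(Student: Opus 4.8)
The plan is to prove the two implications separately, in each case reducing to an explicit understanding of the free $k\cO_p(G)$-modules $k[(G/K)^{?}]$. First I would record what the two conditions say on such a representable module $M=k[(G/K)^{?}]$: here $M(1)=k[G/K]$ is a permutation module, hence a $p$-permutation module; $M(P)=k[(G/K)^P]$; the map $\res^P_1\colon M(P)\to M(1)$ is the obvious inclusion $k[(G/K)^P]\hookrightarrow k[G/K]$, whose image lies in $M(1)^P$; and, since the Brauer quotient of a permutation module satisfies $k[G/K][P]\cong k[(G/K)^P]$ with $Br_P$ projecting onto the span of the $P$-fixed basis vectors, the composite $\brres_P$ is the identity of $k[(G/K)^P]$. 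Both conditions are additive and pass to direct summands --- for the second this uses only that an internal decomposition of $M$ as a $k\cO_p(G)$-module induces compatible internal decompositions of $M(1)$, $M(1)^P$, $M(1)[P]$ and $M(P)$, by naturality of $\res^P_1$ and additivity of $Br_P$ --- so both conditions hold for every projective $k\cO_p(G)$-module.

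For \emph{necessity}, let $0\to P_n\to\cdots\to P_0\to M\to 0$ be a finite projective resolution. Evaluation at $1$ and at each $p$-subgroup $P$ are exact functors, so $P_\bullet(1)$ and $P_\bullet(P)$ are bounded complexes of $p$-permutation $kG$-modules with homology concentrated in degree $0$, equal to $M(1)$ resp.\ $M(P)$. The maps $\brres_P$ are natural in $M$, hence compatible with the differentials of $P_\bullet$, and are degreewise isomorphisms since each $P_i$ is projective; they therefore assemble into an isomorphism of complexes $P_\bullet(P)\cong P_\bullet(1)[P]$, where $[P]$ denotes the degreewise Brauer quotient. The proof then reduces to the following statement: if $C_\bullet$ is a bounded complex of $p$-permutation $kG$-modules such that for \emph{every} $p$-subgroup $P$ the complex $C_\bullet[P]$ is acyclic outside degree $0$, then $H_0(C_\bullet)$ is a $p$-permutation module and $H_0(C_\bullet[P])\cong H_0(C_\bullet)[P]$. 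Granting this, $M(1)=H_0(P_\bullet(1))$ is $p$-permutation and $M(P)\cong H_0(P_\bullet(1)[P])\cong M(1)[P]$, the isomorphism being $\brres_P$. I expect this statement about complexes to be the main obstacle: both hypotheses are essential (a single bounded resolution of a module by $p$-permutation modules does not force that module to be $p$-permutation; one genuinely needs the Brauer quotients to be acyclic as well), and proving it should require restricting to a Sylow $p$-subgroup $S$, reducing to permutation $kS$-modules, and inducting on $|S|$ --- peeling off the part of $C_\bullet$ seen by the Brauer quotient at $S$, a complex of $k$-vector spaces, before recursing on the Brauer quotients at proper subgroups.

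For \emph{sufficiency}, I would induct on the number of $G$-conjugacy classes of $p$-subgroups $P$ with $M(P)\neq 0$; when this number is $0$ we have $M(1)=0$, hence $M(P)\cong M(1)[P]=0$ for all $P$ and $M=0$. If $M\neq 0$, choose a maximal element $[Q]$ (under $G$-subconjugacy) among the conjugacy classes of vertices of the indecomposable summands of the $p$-permutation module $M(1)$, and write $M(1)\cong N_Q\oplus N'$ with $N_Q$ the sum of the summands of vertex $\sim_G Q$. By maximality no summand of $M(1)$ has vertex $>_G Q$, so $M(1)[P]=0$ for $P>_G Q$; hence $M(P)=0$ for $P>_G Q$, i.e.\ $Q$ is maximal in the support of $M$, and moreover $M(1)[Q]=N_Q[Q]$, which by the standard classification of trivial source modules via the Brauer construction is a projective $k\overline N_G(Q)$-module. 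Since $\brres_Q$ is an isomorphism, $M(Q)\cong M(1)[Q]$ is $k\overline N_G(Q)$-projective. The standard splitting lemma for modules over the orbit category --- asserting that the free $k\cO_p(G)$-module induced up from $M(Q)$ along the maximal object $Q$ (namely $k[(G/Q)^{?}]\otimes_{k\overline N_G(Q)}M(Q)$) is a direct summand of $M$ as soon as $M(Q)$ is $k\overline N_G(Q)$-projective --- then gives $M\cong F\oplus M'$ with $F$ free and $M'(Q)=0$. Thus $M'$ is a $k\cO_p(G)$-module whose support omits $[Q]$ and which still satisfies the two conditions, since they pass to the direct summand $M'$; by the inductive hypothesis $M'$ has a finite projective resolution, and splicing it with $F$ yields one for $M$.

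In summary, the two technical hearts are: (a) the statement about bounded complexes of $p$-permutation modules with acyclic Brauer quotients used in the necessity direction, which is where the structure of trivial source modules is really used, and which I expect to be the genuine difficulty; and (b) the orbit-category splitting lemma for modules over an EI-category, together with the compatibility of the Brauer construction with direct-sum decompositions --- both standard, but requiring careful bookkeeping.
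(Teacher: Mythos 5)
There is no in-paper proof to compare against here: the paper states this result as a direct citation of Bouc \cite{Bouc-Resolutions} and does not reprove it, so your argument must be judged on its own. Your overall plan --- verify the two conditions on the representables $k[(G/K)^?]$, observe they pass to summands, and then run two reductions --- is plausible and is probably close in spirit to how one would argue, but there are two genuine gaps.

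For sufficiency, the ``standard splitting lemma'' you invoke is false as you state it. Take $G=C_p$, so a $k\cO_p(G)$-module is a triple $(V,W,f)$ with $V$ a $kG$-module, $W$ a $k$-vector space, and $f\colon W\to V^G$; let $M=(0,k,0)$. Then $Q=G$ is maximal in the support of $M$ and $M(G)=k$ is projective over $k\overline N_G(G)=k$, but $F_Q=k[(G/G)^?]\otimes_k k=(k,k,\id)$ is manifestly not a summand of $M$, since $F_Q(1)\ne 0=M(1)$. What goes wrong is that the Yoneda map $\epsilon\colon F_Q\to M$ need not even be injective below $Q$. Of course, your $M$ fails the Brauer hypothesis ($\brres_G\colon k\to 0$ is not an isomorphism), and indeed the injectivity of $\epsilon$ at lower objects is exactly what $\brres$ being an isomorphism buys you; but that means the splitting step is not a formal consequence of projectivity of $M(Q)$. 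It needs the Brauer condition and a real argument (showing $\epsilon$ is injective at each $P$, and then that the injection is split --- and for $P<Q$ with $p\mid|\overline N_G(Q)|$, the latter is not automatic).

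For necessity, the lemma you reduce to --- a bounded complex $C_\bullet$ of $p$-permutation $kG$-modules with $C_\bullet[P]$ acyclic outside degree $0$ for every $p$-subgroup $P$ has $p$-permutation $H_0$ compatibly with all Brauer quotients --- is, as you suspect, the crux, and your sketch (``restrict to $S$, peel off the top, recurse'') is too thin to count as a proof. The induction is delicate because the Brauer construction is not exact and does not commute with homology of arbitrary complexes; the acyclicity hypothesis at every $P$ is being used in an essential and nonobvious way. Absent a worked-out argument here, this direction is an honest gap rather than a routine verification.
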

 
Brauer quotients interact nicely with the Green correspondence: If $M$ is an indecomposable  
$p$-permutation module with vertex $P$, then the Brauer quotient $M[P]$ is the indecomposable 
projective $k[\overline N_G(P)]$-module corresponding to $M$ under the Green correspondence 
(see \cite[Theorem 3.2]{Broue}). 

We are now ready to state the main result of this section.

\begin{theorem}\label{thm:CappedMooreHom} Let $\un{n}: \cF_p \to \ZZ$ be a superclass function and $X$ an $n$-dimensional capped $\un{n}$-Moore $G$-space.  Assume 
also that $X$ is full and tight. Then the reduced homology
$\widetilde H_n (X; k)$ has a summand $M$ that is a capped Dade $G$-module satisfying $$[M]=\Psi _G (\Dim (X))$$ in $D^{\Omega } (G)$.
\end{theorem}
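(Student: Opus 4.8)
\emph{Proof plan.} The plan is to run the chain-complex argument of Theorem~\ref{thm:Moore} one homological degree at a time, using Bouc's criterion (Proposition~\ref{pro:BoucResolutions}) to control the bottom of the complex and Lemma~\ref{lem:ShortExact2} to peel off the top. Write $m:=\un n(S)$. Since $X^S\subseteq X^H$ whenever $H\le S$ and every $p$-subgroup of $G$ is conjugate into $S$, tightness gives $\un n(H)=\dim X^H\ge\dim X^S=m$ for every $H\in\cF_p$. Together with fullness this shows that $X_i^S\ne\emptyset$ exactly for $0\le i\le m$; in particular $\Omega_{X_i}=0$ for $i\le m$, while each $X_i$ with $i>m$ satisfies $X_i^S=\emptyset$ and has only non-Sylow $p$-subgroup stabilizers. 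I would also record that for every $i$ the short exact sequence $0\to\ker\bd_i\to k[X_i]\to\mathrm{im}\,\bd_i\to 0$ obtained by evaluating $\widetilde C_*(X^?;k)$ at $G/1$ is $X_i$-split: a sequence of $kG$-modules is $kX_i$-split precisely when its restriction to $S$ is $k[\res^G_SX_i]$-split, and $\res^G_SX$ is a tight full Moore $S$-space all of whose cells have $p$-subgroup stabilizers, so the splitness follows exactly as in the proof of Theorem~\ref{thm:Moore}, carried out over the orbit category $\cO_p(S)$, whose cell modules are free.

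The main step is to show that the module $N$ of $m$-cycles of $\widetilde C_*(X;k)$ (with the convention $\widetilde C_{-1}=\un k$, so that $N=k$ when $m=-1$) is a $p$-permutation $kG$-module having the trivial module as a direct summand. Let $\mathcal Z_m$ denote the corresponding cycle $k\cO_p(G)$-module, so $\mathcal Z_m(1)=N$. By the dimension estimate, $\widetilde H_j(X^H;k)=0$ for all $p$-subgroups $H$ and all $j<m$, so truncating the augmented complex at degree $m$ yields an exact sequence of $k\cO_p(G)$-modules
\[
0\to\mathcal Z_m\to k[X_m^?]\to k[X_{m-1}^?]\to\cdots\to k[X_0^?]\to\un k\to 0 .
\]
By Proposition~\ref{pro:BoucResolutions} each $k[X_j^?]$ and the constant functor $\un k$ admits a finite projective resolution over $k\cO_p(G)$, since the value at $G/1$ is a $p$-permutation module and the maps $\brres_P$ are the standard isomorphisms; hence $\mathcal Z_m$ has finite projective dimension. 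Applying Proposition~\ref{pro:BoucResolutions} in the reverse direction shows that $N=\mathcal Z_m(1)$ is a $p$-permutation module and that $\brres_S\colon\mathcal Z_m(S)\to N[S]$ is an isomorphism of $k\overline N_G(S)$-modules. Because $\dim X^S=m$, there are no $(m+1)$-cells in $X^S$, so $\mathcal Z_m(S)=\widetilde H_m(X^S;k)$, which contains $k$ as a summand by the capped hypothesis; therefore $k\mid N[S]$. Since the trivial $kG$-module has vertex $S$ with Brauer quotient $k[S]\cong k$, the description of Brauer quotients of $p$-permutation modules via the Green correspondence forces $k\mid N$.

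To finish, suppose first that $m=n$. Then $\widetilde H_n(X;k)=\ker\bd_n=N$ already has the capped Dade summand $M:=k$, and $[M]=0=\sum_{i=0}^n\Omega_{X_i}$ in $D^{\Omega}(G)$ because every $\Omega_{X_i}$ vanishes. If $m<n$, then since $\widetilde H_i(X;k)=0$ for $i<n$ the augmented complex at $G/1$ gives an exact sequence
\[
0\to\widetilde H_n(X;k)\to k[X_n]\xrightarrow{\bd_n}\cdots\xrightarrow{\bd_{m+1}}N\to 0
\]
in which $X_i^S=\emptyset$ for all $i>m$. Applying Lemma~\ref{lem:ShortExact2} successively to the $X_i$-split sequences $0\to\ker\bd_i\to k[X_i]\to\mathrm{im}\,\bd_i\to 0$ for $i=m+1,\dots,n$, starting from the capped Dade summand $k$ of $N$ (which has $[k]=0$), produces a capped Dade direct summand $M$ of $\widetilde H_n(X;k)$ with $[M]=\sum_{i=m+1}^n\Omega_{X_i}=\sum_{i=0}^n\Omega_{X_i}$ in $D^{\Omega}(G)$. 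Finally, Lemma~\ref{lem:DimSum} gives $\Dim(X)=\sum_{i=0}^n\omega_{X_i}$, so $\Psi_G(\Dim(X))=\sum_{i=0}^n\Psi_G(\omega_{X_i})=\sum_{i=0}^n\Omega_{X_i}=[M]$ by Proposition~\ref{pro:BoucHom}, as required.

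The step I expect to be the main obstacle is the second one---identifying $N$ as a $p$-permutation module with a trivial summand. It is exactly here that Bouc's criterion must be invoked in both directions, which is possible only because the dimension estimate $\dim X^H\ge m$ turns the truncated chain complex into a genuine $\cO_p(G)$-resolution of $\mathcal Z_m$, and because the capped hypothesis feeds in through the identification $\mathcal Z_m(S)\cong\widetilde H_m(X^S;k)$; the passage between Brauer quotients and the Green correspondence then completes the argument.
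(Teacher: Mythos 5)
Your proof is correct and follows essentially the same approach as the paper's: truncate the augmented $k\cO_p(G)$-chain complex at degree $m=\un n(S)$, use Proposition~\ref{pro:BoucResolutions} to deduce that the cycle module $K_m=\mathcal Z_m$ has finite projective dimension and hence that $K_m(1)$ is a $p$-permutation module with the cappedness hypothesis forcing a trivial summand via the Brauer quotient/Green correspondence, and then peel off the remaining degrees $m+1,\dots,n$ with Lemma~\ref{lem:ShortExact2}. Two small points where you are a bit more explicit than the paper: you verify the $X_i$-split hypothesis of Lemma~\ref{lem:ShortExact2} by restricting to $S$ and rerunning the Theorem~\ref{thm:Moore} argument over $\cO_p(S)$ (where the cell modules are genuinely free), whereas the paper leaves this implicit; and you invoke Proposition~\ref{pro:BoucResolutions} directly to see that $k[X_j^?]$ and $\un k$ have finite projective dimension, where the paper cites the corresponding result from \cite{HPY}. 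Both are accurate and do not change the substance of the argument.

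Your final sum $\sum_{i=0}^n\Omega_{X_i}$ (starting at $i=0$) is the correct form, consistent with Lemma~\ref{lem:DimSum}; the lower limit $i=1$ written in the paper's statement of Theorem~\ref{thm:Moore} and in the body of this proof is a typographical slip that only matters when $X^S=\emptyset$, and you handle that case correctly by your convention $\widetilde C_{-1}=\un k$.
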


\begin{proof} Let $m :=\un{n} (S)$, and let 
$$ \widetilde C_* (X^?; k):  0 \to k[X_n ^?] \maprt{\partial _n} k[X_{n-1} ^?] \to \cdots \to k[X_m ^? ] \maprt{\partial _m} \cdots \to k[X_0 ^?] \to \underline{k} \to 0$$
be the reduced chain complex of $X$ over the orbit category $\cO_p(G)$. Note that this chain complex may not be a chain complex of free $k\cO_p$-modules since the isotropy subgroups of $X$ are not assumed to be $p$-subgroups of $G$. 

Consider the truncation of this complex at dimension $m$:
$$0 \to k[X_m ^? ] \maprt{\partial _m} \cdots \to k[X_0 ^?] \to \underline{k} \to 0.$$
Since $\un{n} (P) \geq \un{n}(S)=m$ for every $p$-subgroup $P$, this complex has 
no homology in dimensions  $i<m$. Let $K_m=\ker \partial _m$. The constant module $\un{k}$ and the modules $k[X_i^?]$ have finite 
projective resolutions (see \cite[Cor 3.14, 3.15]{HPY}), hence by an easy induction we can conclude that 
$K_m$ has a finite projective resolution. Then by Proposition \ref{pro:BoucResolutions}, $K_m (1)$ is a $p$-permutation module and 
for every $p$-subgroup $P$, 
$$\brres_P : K_m (P) \to K_m (1) [P]$$ is an isomorphism of  $k[\overline N_G(P)]$-modules. 
 
Since $X_i ^S=\emptyset $ for every $i>m$, we have $K_m ( S) \cong \widetilde H_m (X^S; k)$, and hence $k \mid K_m (S)$. This means that $K_m (1)[S]$ also has $k$ as  a summand. Let $U$ be a component of $K_m (1) $ such that $U[S] \cong k$. Then $U$ has vertex $S$, and by \cite[Theorem 3.2]{Broue} $U$ corresponds to $k$ under the Green correspondence. Thus $U \cong k$ by the uniqueness of the Green correspondent. We conclude that $k \mid K_m (1)$. 
 
By applying Lemma \ref{lem:ShortExact2} inductively, we see that $\widetilde H_n (X; k)$ has a summand $M$ that is a capped Dade module satisfying 
$[M]=\sum _{i=m+1} ^n \Omega _{X_i} $. Since $X_i ^S \neq \emptyset $ for all $i \leq m$,  
by definition we have $\Omega _{X_i}=0$ in $D^{\Omega } (G)$ for all $i\leq m$. Hence we obtain 
\[
[M]=\sum _{i=1} ^n \Omega _{X_i}=\Psi _G \left(\sum _{i=1} ^n  \omega _{X_i}  \right)=\Psi _G (\Dim (X) ).
\]
Note that the last equality holds because $X$ is full. This completes the proof.
\end{proof}

Now we are ready to give a proof for Theorem \ref{thm:IntroInclusion}.

\begin{proof}[Proof of Theorem \ref{thm:IntroInclusion}] Assume that $f= \dim (V)-\dim (W)$ 
for some $k$-oriented real representations $V$ and $W$ of $G$.  The unit sphere $X=S(V)$ can 
be triangulated to obtain a full $G$-CW-complex structure on $X$ (see \cite{Illman}). 
It is clear that $X$ is a tight Moore $G$-space. By the assumption of $k$-orientability, the 
$\overline N_G(S)$-action on $\widetilde H_* (X^S; k)$ is trivial, so $X$ is a capped Moore space.  
Hence, by Theorem \ref{thm:CappedMooreHom}, 
the reduced homology $\widetilde H_n (X; k)$ has a summand $M$ that is a capped Dade $G$-module satisfying 
$$[M]=\Psi _G (\Dim (X))$$ in $D^{\Omega } (G)$. Since  $\widetilde H_n (X; k)\cong k$ as a $kG$-module, 
$M$ is isomorphic to $k$. 
Hence  $$\Psi _G (\Dim (X))=[k]=0.$$ A similar conclusion also holds for the $G$-sphere $Y=S(W)$, so we obtain 
$$\Psi _G(f)=\Psi _G (\Dim (V) -\Dim (W))=\Psi_G ( \Dim(X) -\Dim (Y))=0,$$ and hence $f \in \ker \Psi_G$.
\end{proof}

\section{Oriented Artin-Borel-Smith functions}\label{sect:OrientedArtin}

In the case where $G=S$ is a $p$-group, the kernel of the Bouc homomorphism $\Psi_S: C(S) \to D^{\Omega} (S)$ is precisely the subgroup of superclass functions satisfying the  Borel-Smith conditions, which we now recall (see \cite[Thm. 1.2 and Def. 3.1]{BoucYalcin}). 

\begin{definition}
\label{def:BorelSmith}
Let $S$ be finite $p$-group. A superclass function $f \in C(S)=C(S, \cF_p)$ is called a \emph{Borel-Smith function} if it satisfies the following conditions:

\begin{enumerate}
\item\label{def:BorelSmith1} If $L \lhd K \leq S$, $K/L \cong \ZZ /p$, and $p$ is odd,
then $f(L)-f(K)$ is even.

 \item\label{def:BorelSmith2} If $L \lhd K \leq S$, $K/L \cong \ZZ /p \times \ZZ /p$, and $\{ K_i/L \}_{i=0} ^p$ are the subgroups of order $p$ in $K/L$, then
     \[f(L)-f(K)=\sum
_{i=0} ^p (f(K_i)-f(K)).\]

\item\label{def:BorelSmith3} If $L \lhd K \lhd N \leq N_S(L)$ and $K/L \cong \ZZ /2$,
then $f(L)-f(K)$ is even if $N/L \cong \ZZ /4$, and $f(L)-f(K)$ is divisible by $4$ if $N/L$ is the quaternion group $Q_8$ of order $8$.
\end{enumerate}
\end{definition}

It is known from classical Smith theory that if $X$ is a $G$-space whose mod-$p$ homology is isomorphic to the mod-$p$ homology of a sphere, then 
for every $p$-subgroup $P \leq G$, the fixed-point set $X^P$ also has the mod-$p$ homology of a sphere (see \cite[Thm. 4.23]{tomDieck-Transformation}). Let $\underline{n}(P)$ denote the dimension for which $\widetilde H_* (X^P; \FF _p ) \neq 0$. Then $\un{n}$ is a superclass function, which satisfies the Borel-Smith functions (see \cite[pg. 202-210]{tomDieck-Transformation}). We denote the group of Borel-Smith functions for a $p$-group $S$ by $C_b(S)$.   

\begin{definition} Let $G$ be a finite group and $S$ be a Sylow $p$-subgroup of $G$. 
A superclass function $f : \cF_p \to \ZZ$ is called a \emph{Borel-Smith function} if its restriction to $S$ is a Borel-Smith function.
We denote the subgroup of Borel-Smith functions in $C(G, p)$ by $C_b(G, p)$.
\end{definition}
 
The following is easy to observe. 

\begin{lemma}\label{lem:BorelSmith} The kernel of the Bouc homomorphism $\Psi_G : C(G, p) \to D^{\Omega} (G)$ lies inside $C_b(G, p)$.
\end{lemma}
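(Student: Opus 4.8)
The statement to prove is Lemma \ref{lem:BorelSmith}: $\ker \Psi_G \subseteq C_b(G,p)$. Since $C_b(G,p)$ is defined by the requirement that the restriction to a Sylow $p$-subgroup $S$ be a Borel-Smith function, the plan is to reduce the claim to the already-known $p$-group case, namely the fact that $\ker \Psi_S = C_b(S)$ for the $p$-group $S$ (this is \cite[Thm. 1.2]{BoucYalcin}, cited just above the lemma). The key bridge I would use is the restriction homomorphism $\res^G_S : D(G) \to D(S)$ from Lemma \ref{lem:Restrict}, together with the corresponding restriction map $\res^G_S : C(G,p) \to C(S)$ on superclass functions. The first step is to verify that these restriction maps are compatible with the Bouc homomorphisms, i.e. that the square
\[
\xymatrix{
C(G,p)\ar[r]^-{\Psi_G}\ar[d]_-{\res^G_S} & D^\Omega(G)\ar[d]^-{\res^G_S}\\
C(S)\ar[r]^-{\Psi_S} & D^\Omega(S)
}
\]
commutes. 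Because both $C(G,p)$ and $C(S)$ are generated by the basis elements $\omega_{G/P}$ resp. $\omega_{S/Q}$, and both Bouc homomorphisms are defined on generators, it suffices to chase a single generator $\omega_{G/P}$ with $P$ a non-Sylow $p$-subgroup: one checks $\res^G_S \omega_{G/P} = \sum_{SgP} \omega_{S/(S\cap {}^gP)}$ (the standard Mackey-type decomposition of $\res^G_S(G/P)$ into $S$-orbits) and that $\res^G_S \Omega_{G/P} = \res^G_S[\Delta(G/P)] = [\Delta(\res^G_S(G/P))]$, which by the relations of Lemma \ref{lem:FixedPoints}/\ref{lem:Product} (applied over $S$) equals the same combination $\sum \Omega_{S/(S\cap{}^gP)}$. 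Since $\res^G_S M$ for a Dade module $M$ has $S$-restriction an endo-permutation $kS$-module, this identifies $\res^G_S \circ \Psi_G$ with $\Psi_S \circ \res^G_S$ on generators.

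Granting the commuting square, the proof is immediate: if $f \in \ker \Psi_G$, then $\Psi_S(\res^G_S f) = \res^G_S(\Psi_G(f)) = \res^G_S(0) = 0$, so $\res^G_S f \in \ker \Psi_S = C_b(S)$ by the $p$-group result, and hence $f \in C_b(G,p)$ by definition.

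\textbf{Main obstacle.} The one nonformal point is the verification of commutativity on generators — specifically, being careful that the relations $\Omega_X = \Omega_Y$ when $X^P \neq \emptyset \iff Y^P \neq \emptyset$ (Lemma \ref{lem:FixedPoints}) and the product relation (Lemma \ref{lem:Product}) are exactly mirrored for the $\omega$'s (Lemmas \ref{lem:FixedRelations}, \ref{lem:ProductRelations}), so that the same inclusion-exclusion expansion of $\res^G_S(G/P)$ into $S$-orbits yields matching formulas on both sides. One subtlety to handle: some $S$-orbits in $\res^G_S(G/P)$ may be free (when $S \cap {}^gP$ is trivial relative to $S$ only if $S\cap{}^gP\notin\Syl_p$, which is automatic since $P$ is non-Sylow, so all these orbits remain "non-Sylow-isotropy" $S$-orbits and the convention $\Omega_{S/T}=0$ for $T=S$ is never triggered). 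Once this bookkeeping is done, everything else is formal. I expect the authors to dispatch this quickly by simply invoking the compatibility of $\res^G_S$ with $\Psi$, which follows from the explicit description of $\Psi_G$ on the $\omega_{G/P}$ together with Lemma \ref{lem:Restrict}.
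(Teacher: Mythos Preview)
Your overall strategy matches the paper's exactly: establish the commuting square
\[
\xymatrix{
C(G,p)\ar[r]^-{\Psi_G}\ar[d]_-{\res^G_S} & D^\Omega(G)\ar[d]^-{\res^G_S}\\
C(S)\ar[r]^-{\Psi_S} & D^\Omega(S)
}
\]
and then invoke $\ker\Psi_S = C_b(S)$ from \cite{BoucYalcin}. However, your verification of commutativity contains a slip. The formula $\res^G_S\,\omega_{G/P} = \sum_{SgP} \omega_{S/(S\cap{}^gP)}$ is \emph{false}: the assignment $X\mapsto\omega_X$ is not additive on disjoint unions (by definition $\omega_X(Q)\in\{0,1\}$, while your right-hand side can exceed $1$). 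The same problem afflicts the $\Omega$ side: Lemma~\ref{lem:Product} says $\Omega_{X\amalg Y}=\Omega_X+\Omega_Y-\Omega_{X\times Y}$, not a plain sum. So the ``same combination'' you expect on both sides is not the naive one, and matching the inclusion--exclusion corrections for $\omega$ and $\Omega$ is exactly the content of Proposition~\ref{pro:BoucHom}, which you have not invoked.

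The paper sidesteps the Mackey bookkeeping entirely by working with non-transitive $G$-sets. It observes directly that for \emph{any} $G$-set $X$,
\[
\res^G_S(\omega_X)=\omega_{\res^G_S X}\qquad\text{and}\qquad\res^G_S(\Omega_X)=\Omega_{\res^G_S X},
\]
the first because $\res^G_S$ on $C(G,p)$ is restriction of domain and $(\res^G_S X)^Q=X^Q$ for $Q\leq S$, the second because $\res^G_S\Delta(X)=\Delta(\res^G_S X)$ as $kS$-modules (and the convention $\Omega_X=0$ when $X^S\neq\emptyset$ is consistent on both sides). Proposition~\ref{pro:BoucHom}, which says $\Psi_G(\omega_X)=\Omega_X$ for \emph{all} $G$-sets $X$, then gives commutativity in one line. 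Your approach is salvageable---the corrections on the $\omega$ side (Lemma~\ref{lem:ProductRelations}) and the $\Omega$ side (Lemma~\ref{lem:Product}) are formally identical---but the clean route is to avoid decomposing into orbits at all.
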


\begin{proof}  By Lemma \ref{lem:Restrict}, there is a well-defined restriction homomorphism $\res^G _S : D (G) \to D(S)$, given by $\res^G _S [M] =[\res^G _S M]$ for every Dade $kG$-module $M$. It is easy to see from the definitions that $$\res^G _S ( \Omega _X)=\Omega _{\res^G _S X} $$ for every $G$-set $X$. There is also a homomorphism $\res^G _S : C(G, p) \to C(S)$ defined by restriction of  domain. We have
$$ \res ^G _S (\omega_X)=\omega _{\res^G _S X}$$ for every $G$-set $X$. This shows that  $\Psi_G$ commutes with the restriction map to $S$. If $f \in \ker \Psi_G$, then $\res^G _S f \in \ker \Psi _S$. By \cite[Thm. 1.2]{BoucYalcin}), $\ker \Psi _G = C_b(S)$, and hence $f \in C_b (G, p)$.
\end{proof}

In the case where $G=S$ is a $p$-group, the group of Borel-Smith functions $C_b(S)$ is exactly equal to the image of the dimension function $$\Dim : R_{\RR} (S) \to C(S)$$ from the real representation ring to the group of superclass functions (see \cite{DotzelHamrick}). For an arbitrary finite group $G$ there is a similar theorem due to Bauer \cite{Bauer}, stated in terms of dimension functions defined on prime-power order subgroups. Let $\cP$ denote the family of all subgroups of $G$ with prime-power order and let $C(G , \cP)$ denote the group of superclass functions $f: \cP \to \ZZ$. Bauer imposes the following additional condition on the superclass functions in $C(G, \cP)$ (see also \cite{ReehYalcin}).

\begin{definition}\label{def:ArtinCondition} A function $f \in C(G, \cP)$ \emph{satisfies the Artin condition} if:
\begin{itemize}
\item[($\ast$)] For any distinct prime numbers $p$ and $q$, consider $L \lhd K \lhd H \leq N_G(L) $  subgroups of $G$
such that  $K$ is a $p$-group, $K/L\cong \ZZ /p $, and $H/K \cong \ZZ/ q^r$. Then $f(L)\equiv f(K) \pmod{q^{r-l}}$, 
where $H/K$ acts on $K/L$ with kernel of order $q^l$.
\end{itemize}
\end{definition}

Bauer proves the following:

\begin{theorem}[Bauer \cite{Bauer}, Thm. 1.3]\label{thm:Brauer} Let $f \in C(G, \cP)$ be a superclass function that 
satisfies the Borel-Smith conditions when restricted to a Sylow subgroup, as well as the Artin condition $(\ast)$. 
Then there is a virtual real representation $x=[V]-[W]$ such that $f=\dim V-\dim W$.
\end{theorem}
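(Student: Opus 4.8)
The plan is to prove this realization statement in the style of Dotzel--Hamrick \cite{DotzelHamrick}, now globalized to an arbitrary finite group by means of Brauer's (and Artin's) induction theorem. Write $A\leq C(G,\cP)$ for the subgroup of superclass functions whose restriction to a Sylow subgroup satisfies the Borel--Smith conditions and which satisfy the Artin condition $(\ast)$. Since $\Dim(V)(P)=\dim_\RR V^P$ manifestly satisfies both (the first by Smith theory applied to the sphere $S(V)$, cf.\ \cite{tomDieck-Transformation}, the second by a direct count of how the dimension of the $+1$-eigenspace changes), the image of $\Dim\colon R_\RR(G)\to C(G,\cP)$ lies in $A$, and it suffices to show this image is all of $A$. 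The $p$-local input is immediate: fix $S_p\in\Syl_p(G)$ for each prime $p\mid|G|$; by hypothesis $\res^G_{S_p}f\in C_b(S_p)$, so by Dotzel--Hamrick there is a virtual real $S_p$-representation $x_p$ with $\Dim(x_p)=\res^G_{S_p}f$ as a function on the $p$-subgroups of $G$. Because every prime-power subgroup of $G$ is conjugate into exactly one $S_p$, it is enough to glue the $x_p$ into a single virtual real $G$-representation $x$ with $\res^G_{S_p}x$ carrying the same dimension function as $x_p$ for every $p$.

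The globalization proceeds in two stages. Rationally it is easy: by Artin's induction theorem $R_\QQ(G)$ is detected on cyclic subgroups, every prime-power cyclic subgroup lies in some $S_p$, and on a cyclic group the fixed-point data on its prime-power subgroups assembles with no compatibility obstruction; hence some multiple $Nf$ is realized and $\Dim(R_\RR(G))$ has finite index in $A$. The content is to remove the denominator. If $mf=\Dim(x)$ with $m>1$ minimal, pick a prime $q\mid m$ and localize the problem onto $q$-elementary (over $\RR$: $\RR$-elementary) subgroups $E=Q\times C$ via Brauer's induction theorem. On such an $E$ the combined hypotheses are exactly strong enough to realize $\res^G_E f$ by an honest virtual real $E$-representation: the Borel--Smith conditions handle the $Q$-direction, while the Artin condition $(\ast)$ --- which governs congruences $f(L)\equiv f(K)\pmod{q^{r-\ell}}$ when a group of order $q^r$ acts on a $\ZZ/p$-section with kernel of order $q^\ell$ --- is precisely what permits division by powers of $q$ in the mixed directions. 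Running tom Dieck's prime-by-prime divisibility argument then contradicts the minimality of $m$, so $\Dim(R_\RR(G))=A$; in particular $f\in A$ is of the form $\dim V-\dim W$.

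I expect the last divisibility step to be the main obstacle: coordinating Brauer (really Berman--Witt, since we work over $\RR$ and must track Schur indices and $\RR$-elementary subgroups) induction with the local Dotzel--Hamrick realizations so that the resulting representation is genuinely integral, and verifying that the Artin condition $(\ast)$ --- and nothing stronger --- is exactly what the cross-prime divisibilities demand. The remaining ingredients (necessity of the conditions, the rational statement via Artin, the reduction to Sylow subgroups) are routine once this is in place.
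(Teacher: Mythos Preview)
The paper does not supply its own proof of this theorem; it is quoted from Bauer \cite{Bauer}. What the paper does prove is the single-prime oriented variant (Theorem~\ref{thm:OriBauer}), explicitly following ``Bauer's construction with small variations,'' so that is the natural point of comparison.

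Your overall architecture---necessity via Smith theory, Dotzel--Hamrick for the Sylow-local input, then a prime-by-prime divisibility argument to remove denominators---matches Bauer's and the paper's. The difference is in how the divisibility step is executed. Bauer (and the paper, for the oriented version) does not reduce to $\RR$-elementary subgroups via Brauer--Berman--Witt induction. Instead the argument inducts over an expanding family of cyclic $p$-subgroups: at each step one writes down an \emph{explicit} virtual representation $U=\ind_R^G\overline U$, built from a two-dimensional rotation representation of a cyclic $p$-section, and checks directly that it realizes $n_q f$ on the new family for some $n_q$ coprime to $q$. The Artin condition $(\ast)$ enters not as a compatibility condition for an induction-theoretic descent but as the exact congruence needed to make $f(K)/q^{r-l}$ an integer multiplier for $U$. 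A short coprimality argument (one $n_q$ for each prime $q$) then yields integral realization.

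Your induction-theorem route is plausible in outline, but two points deserve care before calling it a proof. First, your rational step ``on a cyclic group the fixed-point data on its prime-power subgroups assembles with no compatibility obstruction'' is doing real work: $f$ is only specified on $\cP$, and for a general cyclic $C$ you must exhibit a virtual $C$-representation whose restriction to each Sylow part has the prescribed dimension function---this is true, but it is not literally Artin induction and should be argued. Second, and more seriously, the claim that on an $\RR$-elementary $E$ the conditions ``are exactly strong enough to realize $\res^G_E f$'' is the entire theorem in miniature; Bauer's explicit construction sidesteps this by never needing to realize $f$ on any auxiliary subgroup, only to manufacture representations with the right support. If you pursue the Brauer-induction route you will have to prove that elementary-subgroup realization directly, and at that point the explicit construction is likely the cleanest way through anyway.
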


If one studies the proof of Bauer's theorem it is easy to see that one needs the Artin condition  $(\ast)$ to hold only 
in the case when $K$ is a cyclic $p$-subgroup. This follows from Definition \ref{def:BorelSmith}\ref{def:BorelSmith2}, 
which implies that a Borel-Smith function is determined by its values on the cyclic $p$-subgroups. 
Also observe that when $p=2$, there is no nontrivial automorphism of $K/L \cong \ZZ/2$, so in this case the Artin condition 
always holds. 
 
Let $k$ be a field of characteristic $p$. The real representations in Bauer's theorem need not be $k$-orientable 
(see Definition \ref{def:kOrientable}) when $p$ is odd. The problem derives from a certain real representation of the dihedral group $D_{2p}$, whose representation sphere is not $k$-orientable, used in the induction of Bauer's proof.  We can modify Bauer's construction to obtain only $k$-orientable representations, 
at the cost of introducing a variation of the Artin condition. We name this condition only for functions 
defined on $\cF_p$, but it can easily be extended to families defined on $\cP$ if necessary.
 
\begin{definition}\label{def:OrientedArtin} A function $f \in C(G, p)$  \emph{satisfies the oriented Artin condition} if: 
\begin{itemize}
\item[($\ast\ast$)]  For any distinct prime numbers $p$ and $q$, consider $L\triangleleft K\triangleleft H\leq N_G(L)$ subgroups of $G$ such that $K$ is a cyclic $p$-group, $K/L\cong \ZZ/p$, and $H/K\cong\ZZ/q^r$.  Then $f(L)\equiv f(K) \pmod{ 2q^{r-l}}$, where $H/K$ acts on $K/L$ with kernel of order $q^l$.
\end{itemize}
\end{definition}

Note that the oriented Artin condition ($\ast\ast$) differs from the ordinary Artin condition ($\ast$) not only in the restriction of our attention to the cyclic $p$-subgroups $H$, but also in the extra factor of $2$ in the modular equation.  This factor is necessary to obtain $k$-orientable representations. 

\begin{definition} An \emph{oriented Artin-Borel-Smith function} is a superclass function that satisfies the Borel-Smith 
conditions of Definition \ref{def:BorelSmith} and the oriented Artin condition of Definition \ref{def:OrientedArtin}. 
The subgroup of $C(G,p)$ consisting of the oriented Artin-Borel-Smith functions is denoted by $C_{ba^+} (G, p)$.
\end{definition}

Let $R_{\RR} ^{+} (G, k)$ denote the Grothendieck group of $k$-oriented real representations  of $G$.
The following is easy to prove using Bauer's construction in \cite[Thm 1.3]{Bauer} with small variations.

\begin{theorem}\label{thm:OriBauer} The image of the dimension function $R^+ _{\RR} (G, k) \to C(G, p)$ is equal to $C_{ba^+} (G, p)$.
\end{theorem}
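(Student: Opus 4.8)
The plan is to prove both inclusions separately. For the inclusion $\Dim(R^+_\RR(G,k)) \subseteq C_{ba^+}(G,p)$, I would argue as follows. Given a $k$-oriented real representation $V$, its dimension function $\Dim(V)$ restricted to any Sylow $p$-subgroup $S$ is the dimension function of a real representation of $S$, hence satisfies the Borel-Smith conditions by the classical Dotzel--Hamrick theorem \cite{DotzelHamrick}; so $\Dim(V) \in C_b(G,p)$. It remains to check the oriented Artin condition $(\ast\ast)$. For this, fix subgroups $L \triangleleft K \triangleleft H \leq N_G(L)$ with $K$ a cyclic $p$-group, $K/L \cong \ZZ/p$, and $H/K \cong \ZZ/q^r$, and let $q^l$ be the order of the kernel of the $H/K$-action on $K/L$. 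Restricting $V$ to $H$ and passing to $L$-fixed points, $V^L$ becomes an $H/L$-representation. The ordinary Artin condition $(\ast)$, which holds for any real representation by Bauer's analysis, gives $f(L) \equiv f(K) \pmod{q^{r-l}}$; the extra factor of $2$ is exactly where $k$-orientability is used. The key point is that $k$-orientability forces the $\overline N_G(K) \supseteq H/K$-action on $\widetilde H_*(S(V)^K;k) \cong \widetilde H_*(S(V^K);k)$ to be trivial, and combined with the action on $\widetilde H_*(S(V^L);k)$ this pins down the parity of $\dim V^L - \dim V^K$ relative to the $q$-group action, yielding the congruence modulo $2q^{r-l}$. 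I would extract this as a short lemma about representations of metacyclic groups $\ZZ/p \rtimes \ZZ/q^r$ and the orientation character.

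For the reverse inclusion $C_{ba^+}(G,p) \subseteq \Dim(R^+_\RR(G,k))$, the plan is to follow Bauer's proof of Theorem \ref{thm:Brauer} essentially verbatim, tracking the orientability of the representations produced at each inductive step. Bauer's construction builds a virtual representation realizing a given superclass function by induction, and the only place where a non-$k$-orientable representation is introduced is the specific two-dimensional (rotation-type) representation of a dihedral group $D_{2p}$ used to adjust values across a subgroup pair $L \triangleleft K$ with $K/L \cong \ZZ/p$ acted on nontrivially by a $q$-part. The strengthened condition $(\ast\ast)$ ensures that whenever such an adjustment is required, the needed change in dimension is divisible by $2q^{r-l}$ rather than just $q^{r-l}$, so one can replace the offending $D_{2p}$-representation by (a multiple of) the regular representation of the relevant cyclic group, or by an induced representation from a subgroup, which is automatically $k$-oriented. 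One then checks that all the building blocks of Bauer's construction --- permutation modules $\RR[G/H]$, their differences, and inductions of sign representations from index-two subgroups --- are $k$-oriented, using that a permutation representation sphere has trivial $\overline N_G(P)$-action on the top homology of each fixed set (the fixed set is again a sphere coming from a permutation representation), and that $R^+_\RR(G,k)$ is closed under the operations used.

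The main obstacle will be the reverse inclusion: one must reorganize Bauer's inductive argument so that, at the step where he introduces the non-orientable $D_{2p}$-piece, the hypothesis $(\ast\ast)$ is available in exactly the form needed, and verify that the substitute orientable representation still corrects the dimension function in the same way on all relevant subquotients without disturbing the values already arranged. This is a matter of careful bookkeeping rather than a new idea, which is why I would present it as ``following Bauer's construction with small variations''; the genuinely new content is the orientability tracking and the lemma identifying where the factor of $2$ enters. I would isolate the metacyclic computation as an auxiliary lemma and then state that the rest of Bauer's argument goes through mutatis mutandis, indicating precisely which representations replace which.
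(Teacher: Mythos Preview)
Your proposal is correct and follows essentially the same two-inclusion strategy as the paper, with only minor differences in execution worth noting.

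For the forward inclusion, the paper takes a slicker route than your direct orientation-character analysis: it observes that Bauer's equivariant cohomology argument shows the $p$-period of $H$ divides $f(L)-f(K)$, and then simply quotes Swan's computation \cite{Swan-Period} that this $p$-period equals $2q^{r-l}$. The factor of $2$ thus falls out of Swan's theorem rather than from a separate metacyclic lemma. Your approach would also work, but the periodicity argument is shorter.

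For the reverse inclusion, the paper does exactly what you outline: Bauer's induction over families of cyclic $p$-subgroups, with the non-orientable $D_{2p}$ piece replaced by a $k$-orientable virtual representation. The specific replacement the paper uses is $\overline U = 2\RR - W$, where $W$ is the $2$-dimensional rotation representation of $K$ with kernel $L$; this is then viewed as an $R$-representation (for $R = \ker(H \to \Aut(K))$) and induced to $G$. The oriented Artin condition $(\ast\ast)$ guarantees $2q^t \mid f(L)-f(K)$, which is precisely what is needed for the induced module $\ind_R^G \overline U$ to correct the value at $K$ by the right multiple. Your suggestion of ``the regular representation of the relevant cyclic group'' is not quite the construction used, but your alternative ``an induced representation from a subgroup'' is, and your diagnosis of where and why the factor of $2$ enters is accurate.
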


\begin{proof} The argument given in \cite[Proposition 1.2]{Bauer} directly applies here to give that the image of the dimension function 
lies in $C_{ba^+} (G)$. Let $L \triangleleft K \triangleleft H \leq N_G(L)$ be as in Definition \ref{def:OrientedArtin}. If we take $m=f(L)$ and $n=f(K)$ and repeat Bauer's equivariant cohomology argument, we obtain that the periodicity of the mod-$p$ cohomology of the group $G$ divides $m-n$. 
The $p$-period of a group is computed by Swan in \cite[Theorem 1 and 2]{Swan-Period}. According to these calculations, 
 the $p$-period of  $H$ is $2q^{r-l}$.
 
Conversely, if $P \leq G$ is a non-cyclic $p$-subgroup then there is a normal subgroup $N \trianglelefteq P$ 
such that $P/N \cong \ZZ/p\times \ZZ /p$. Condition \ref{def:BorelSmith2} of Definition \ref{def:BorelSmith}
then implies that the value of a Borel-Smith function $f$ at $P$ is determined by its  values on proper subgroups $Q <P$.  
This implies that 
a function in $f\in C_{ba^+} (G, p)$ is uniquely determined by its values on the cyclic $p$-subgroups of $G$. 

Take $f \in C_{ba^+} (\cF)$. We will show that there is a virtual representation $x\in R_{\RR} (G)$ such that $\Dim (x) (P)=f(P)$ 
for all cyclic subgroups $P \leq S$. The argument is by induction, so we introduce some terminology for the intermediate step: 
Given a family of cyclic $p$-subgroups $\cH$, we say  $f$ is \emph{realized over $\cH$} 
if there is a virtual $k$-oriented $G$-representation $x$ such that $\Dim (x)  (P)=f(P)$ for every  $P \in \cH$.

Any $f$ is realizable at the trivial subgroup $1 \leq S$ by taking $f(1)$ copies of the trivial representation $\RR$.  
Suppose that $f$ is realized over some nonempty family $\cK$ of cyclic subgroups of $S$. Let $K\leq S$ be a cyclic subgroup all of whose proper subgroups are contained in $\cK$, and let $\cK'$ be the family obtained from $\cK$ by adding the conjugacy class of $K$.
We will show that $f$ is also realizable over $\cK'$. By induction this will give us the realizability of $f$ over all cyclic subgroups.

Since $f$ is realizable over $\cK$, there is an element $x\in R^{+}_{\RR} (G)$ such that $\Dim (x) (J)=f(J)$ for every $J \in \cK$. By replacing $f$ with $f-\Dim (x)$, we may assume that $f(J)=0$ for every $J\in \cK$. To prove that $f$ is realizable over the larger family $\cK'$, we will show that for every prime $q$, there is an integer $n_q$ coprime to $q$ such that $n_qf$ is realizable over $\cK'$ by some virtual representation $x_q \in R_{\RR } (\cF)$. This will be enough to complete our proof  by a simple number theory argument (see \cite[Theorem 8.7]{ReehYalcin} for details).

If $q=p$, then by earlier results due to tom Dieck \cite{tomDieck-Transformation} and Dotzel-Hamrick \cite{DotzelHamrick}, there is an integer $n_p$, coprime to $p$, such that $n_p f$ is realized by an element in $x_p \in R^{+}_{\RR} (G)$ (see also \cite[Proposition 4.9]{ReehYalcin}). Thus we may assume that $q \neq p$.  

Let $L\leq K$ be the unique subgroup of $K$ of index $p$. Choose $H \leq N_G(K)$ such that $H/K$ is a Sylow $q$-subgroup of $\overline N_G(K)$. Consider the homomorphism $\rho: H \to \Aut (K)$ determined by the $H$-conjugation action on $K$, and set  $R=\ker \rho$. Note that $R$ is nilpotent and hence isomorphic to the product $K \times R_q$, where $R_q$ is the unique Sylow $q$-subgroup of $R$. Since $K$ is cyclic and $p$ is odd, $\Aut (K)$ is cyclic. Thus $H/R$ is a cyclic $q$-group, say of order $q^t$. There is an element $h \in H$ such that $\langle h \rangle \cdot R=H$. The oriented Artin condition for the subgroups $L \leq K \leq  \langle h \rangle \cdot R$  gives that $2q^t \mid (f(L)-f(K))$.  

Let $\overline U$ be the real $K$-representation defined by $\overline U =2\RR -W$, where $W$ is the $2$-dimensional real representation of $K$ with kernel $L$ defined by sending a generator of $K/L$ to the rotation by $2\pi/p$. Note that $\overline U$ is a $k$-orientable representation of $K$. 
We can consider $\overline U$ as a representation of $R$ via the projection map $R \to K$. Let $U=\ind _{R} ^G \overline U$. It is easy to see that $\dim (U^J)=0$ for any $J\in \cH$. If $J \in \cH'-\cH$, then $J$ is conjugate to $K$ and we have 
$$\dim (U^J)= \dim (\overline U ) \cdot |N_G (K): K| =2 |N_G(K): H| \cdot  |H: K|=2 n_q  q^t,$$
where $n_q:=|N_G(K): H|$ is coprime to $q$. If we take $V_q$ as $f(H)/ 2q^t$ copies of $U$, then $V_q$ realizes $n_q f$.
This completes the proof.
\end{proof}

As we pointed out above, when $p=2$, both the Artin condition of Definition \ref{def:ArtinCondition}
and the oriented Artin condition of Definition \ref{def:OrientedArtin} hold trivially. So in this case we have $$C_{ba^+} (G, p)=C_{ba} (G, p)=C_b(G, p).$$
Note also that when $k$ is of characteristic $2$, every real representation is $k$-orientable, so $R_{\RR} ^+ (G, k) =R_{\RR} (G)$. Thus when $p=2$, the conclusion of Theorem \ref{thm:OriBauer}  coincides with that of Theorem \ref{thm:Brauer}. Thus $C_{ba^+} (G, p)$ is equal to the image of the dimension function $\Dim : R_{\RR} (G) \to C (G,p)$,  which is also equal to the group of Borel-Smith functions $C_b(G, p)$.  

When $p$ is odd,  for any $L\trianglelefteq H$ with $H/L \cong \ZZ/p$, a Borel-Smith  function $f$ will satisfy the modular equation
 $f(L)\equiv f(H) \pmod{2} $.
In particular if $f \in C_{ba^+}(G, p)$ is such that $f(1)$ is even, then $f(P)$ is even for all $P \in \cF_p$. We call such functions \emph{even-valued}. 
In the proof  of Theorem \ref{thm:OriBauer} 
we used the representation $\overline U$, a restriction of a complex representations to $\RR$. From  this one can see that an even-valued oriented Artin-Borel-Smith function $f \in C_{ba^+} (G, p)$ can be realized by the (real) dimension function of a complex representation of $G$. We conclude the following.

\begin{proposition} Let $\KK$ be the field such that $\KK=\RR$ if $p=2$ and $\KK=\CC$ if $p>2$. Let $f \in C_{ba^+} (G, p)$ be such that $f(1)$ is even when $p$ is odd. Then $f$ is in the image of the dimension function $$\Dim : R_{\KK} (G) \to C_{ba^+ } (G, p)$$
defined by $(\Dim V)(P)=\dim _{\RR} (V^P)$ for every $p$-subgroup $P$ of $G$.
\end{proposition}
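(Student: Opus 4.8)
The plan is to refine the proof of Theorem~\ref{thm:OriBauer}. When $p=2$ there is nothing new: $\KK=\RR$, every real representation is $k$-oriented, and $C_{ba^+}(G,2)=C_b(G,2)$ equals the image of $\Dim\colon R_\RR(G)\to C(G,2)$ by Theorem~\ref{thm:OriBauer} and the remarks following it. So assume henceforth that $p$ is odd, that $\KK=\CC$, and that $f\in C_{ba^+}(G,p)$ is even-valued; write $\res^\CC_\RR$ for the realification of a complex representation. The goal is to produce a virtual complex representation $y$ of $G$ with $\Dim(\res^\CC_\RR y)=f$.

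The argument would rest on three elementary observations. \emph{(a)} For a complex representation $V$ and a $p$-subgroup $P$, the fixed space $(\res^\CC_\RR V)^P=\res^\CC_\RR(V^P)$ is a complex space on which $\overline N_G(P)$ acts $\CC$-linearly; a $\CC$-linear automorphism has positive real determinant, hence preserves the orientation of the fixed-point sphere, so $\res^\CC_\RR V$ is automatically $k$-oriented. It therefore suffices to realize $f$ by a realification of a virtual complex representation. \emph{(b)} For any real representation $U$ one has $\res^\CC_\RR(U\otimes_\RR\CC)\cong U\oplus U$, so $2[U]$ always lies in the image of $\res^\CC_\RR\colon R_\CC(G)\to R_\RR(G)$. \emph{(c)} Realification commutes with induction: $\ind_H^G\circ\res^\CC_\RR\cong\res^\CC_\RR\circ\ind_H^G$.

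I would then rerun the induction in the proof of Theorem~\ref{thm:OriBauer} over an increasing chain of families of cyclic $p$-subgroups, tracking complex structures. As there, condition~\ref{def:BorelSmith2} reduces the problem to realizing $f$ on cyclic $p$-subgroups. In the base case (the trivial subgroup) I would take $\tfrac{f(1)}{2}$ copies of the trivial complex representation, which is legitimate precisely because $f(1)$ is even. For the inductive step, with $\cK\subseteq\cK'=\cK\cup\{[K]\}$ and $f$ replaced by $f-\Dim(\res^\CC_\RR y_\cK)$ for a complex $y_\cK$ already realizing it over $\cK$ (so the new $f$ is still even-valued and vanishes on $\cK$), I would show that for each prime $q$ some multiple $m_qf$ with $m_q$ coprime to $q$ is realized over $\cK'$ by a realification of a complex representation. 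When $q\ne p$, the module used by Bauer is $U=\ind_R^G\overline U$ with $\overline U=2\RR-W$, where $W$ is the two-dimensional real $K$-representation with kernel $L$ on which a generator of $K/L$ rotates by $2\pi/p$, pulled back along $R\to K$; since $p$ is odd the character $\lambda\colon K\to\CC^\times$ with kernel $L$ sending that generator to $e^{2\pi i/p}$ is non-real, so $\res^\CC_\RR\lambda\cong W$, while $2\RR$ is the realification of the trivial $\CC K$-module; hence $\overline U$ is itself the realification of a virtual complex representation of $K$, and by~(c) so are $U$ and the multiple $V_q$ of it used there, their $k$-orientability and the identity $\Dim(V_q)=n_qf$ on $\cK'$ being exactly as in that proof, with $m_q:=n_q$ coprime to $q$. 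When $q=p$, the proof of Theorem~\ref{thm:OriBauer} already produces a $k$-oriented real $x_p$ and an $n_p$ coprime to $p$ with $\Dim(x_p)=n_pf$ on $\cK'$; by~(b), $2x_p=\res^\CC_\RR(x_p\otimes_\RR\CC)$ is a realification of a complex module, hence $k$-oriented by~(a), with $\Dim(2x_p)=2n_pf$ on $\cK'$, so $m_p:=2n_p$ is coprime to $p$. The set of $m$ for which $mf$ is so realized over $\cK'$ is a subgroup of $\ZZ$ containing $n_q$ for every prime $q\ne p$ and containing $2n_p$, and no prime divides all of these; hence that subgroup is all of $\ZZ$, so $f$ itself is realized over $\cK'$ (this last step being the number-theoretic patching as in \cite[Theorem 8.7]{ReehYalcin}). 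Running the induction to the family of all cyclic $p$-subgroups yields a virtual complex $y$ with $\Dim(\res^\CC_\RR y)=f$ on all cyclic $p$-subgroups, and since $\Dim(\res^\CC_\RR y)$ and $f$ are both Borel-Smith functions, condition~\ref{def:BorelSmith2} forces equality on all of $\cF_p$.

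I expect the only genuine obstacle to be the case $q=p$: the $p$-local realization results of tom Dieck and Dotzel--Hamrick output real, not complex, representations, and an arbitrary real representation with even-valued dimension function need not be a realification. Observation~(b) circumvents this at the cost of an extra factor of $2$; the point to be checked --- and handled by the number-theoretic patching above --- is that this extra factor is harmless, since $p$ is odd, so $2n_p$ is still coprime to $p$ and the family $\{n_q\}_{q\ne p}\cup\{2n_p\}$ remains globally coprime.
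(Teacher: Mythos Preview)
Your proposal is correct and follows the same strategy as the paper: rerun the inductive construction from Theorem~\ref{thm:OriBauer} and check that each building block is the realification of a complex representation. The paper's proof is extremely brief --- it only records that the base case can use $f(1)/2$ copies of the trivial complex representation (legitimate since $f(1)$ is even), and that for $q\ne p$ the module $\overline U=2\RR-W$ is visibly the realification of a complex virtual representation, so that $\ind_R^G\overline U\in R_\CC(G)$.

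Where your argument differs is in its treatment of the $q=p$ step. The paper says nothing about this case in the proof of the Proposition, implicitly assuming that the tom Dieck/Dotzel--Hamrick input can be taken complex. You instead handle it by the doubling trick~(b): replace the real $x_p$ by $\res^\CC_\RR(x_p\otimes_\RR\CC)=2x_p$, so that $m_p=2n_p$ is still coprime to $p$ (as $p$ is odd) and the number-theoretic patching goes through. This is a clean and honest way to close the step; it costs nothing and makes the argument self-contained, whereas the paper's version leaves the reader to supply a justification.
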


\begin{proof} The case $p=2$ is clear from the discussion above. Let $p>2$ and assume that $f(1)$ is even. We started the induction in the proof of Theorem \ref{thm:OriBauer} by subtracting $f(1)$ copies of the dimension function of the constant real representation from $f$. Since $f(1)$ is even, this step can be achieved using $f(1)/2$ copies of the constant complex representation. 
In the inductive steps of the proof, we use the representation $\overline U =2\RR-W$. Both $2\RR$ and $W$ come from complex representations, so $\ind_R ^G \overline U \in R_{\CC} (G)$.
\end{proof}

As a consequence of Lemma \ref{lem:BorelSmith} and Theorem \ref{thm:OriBauer}, we have the following:

\begin{theorem} Let $G$ be a finite group and $\Psi_G: C(G, p)\to D^{\Omega} (G)$ the Bouc homomorphism. Then $$ C_{ba^+} (G, p) \subseteq \ker \Psi _G \subseteq C_{b} (G, p).$$ In particular, when $p=2$, we have $\ker \Psi_G=C_{ba+} (G, p)=C_{b} (G, p)$.
\end{theorem}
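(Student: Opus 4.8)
The plan is to assemble this statement from results already established in the paper, so the proof will be short. First I would obtain the right-hand inclusion $\ker\Psi_G \subseteq C_b(G,p)$ directly from Lemma \ref{lem:BorelSmith}, whose proof shows that $\Psi_G$ commutes with restriction to a Sylow $p$-subgroup $S$ and invokes \cite{BoucYalcin} to identify $\ker\Psi_S$ with the Borel--Smith functions $C_b(S)$; since membership in $C_b(G,p)$ is defined by the Borel--Smith conditions on $S$, any $f\in\ker\Psi_G$ lies in $C_b(G,p)$.

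For the left-hand inclusion $C_{ba^+}(G,p) \subseteq \ker\Psi_G$, I would combine Theorem \ref{thm:OriBauer} with Theorem \ref{thm:IntroInclusion}. The former identifies $C_{ba^+}(G,p)$ with the image of the dimension homomorphism $\Dim\colon R^+_{\RR}(G,k)\to C(G,p)$; the latter says that this image lies in $\ker\Psi_G$. Hence $C_{ba^+}(G,p)=\mathrm{im}(\Dim)\subseteq\ker\Psi_G$, and the displayed chain of inclusions follows.

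For the case $p=2$, I would use the observation recorded earlier in this section: since $\mathbb Z/2$ has no nontrivial automorphism, the oriented Artin condition of Definition \ref{def:OrientedArtin} (as well as the ordinary Artin condition of Definition \ref{def:ArtinCondition}) holds trivially, so that $C_{ba^+}(G,2)=C_b(G,2)$. The two inclusions above then collapse, giving $\ker\Psi_G = C_{ba^+}(G,2) = C_b(G,2)$.

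Since the theorem is a repackaging of prior work, there is no genuinely new obstacle in its proof itself; the real difficulty lives in the cited inputs — the topological argument via capped Moore $G$-spaces behind Theorem \ref{thm:IntroInclusion}, and the modification of Bauer's inductive construction producing $k$-orientable representations behind Theorem \ref{thm:OriBauer}. The one point I would be careful about in the assembly is consistency of the coefficient field: Theorems \ref{thm:OriBauer} and \ref{thm:IntroInclusion} must be applied with the same algebraically closed $k$ of characteristic $p$, so that the group $R^+_{\RR}(G,k)$ appearing in both statements is literally the same.
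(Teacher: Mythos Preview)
Your proposal is correct and matches the paper's own approach exactly: the theorem is stated there simply ``as a consequence of Lemma \ref{lem:BorelSmith} and Theorem \ref{thm:OriBauer}'', with no further proof given, and the $p=2$ collapse is handled by the preceding paragraph noting that the oriented Artin condition is vacuous when $p=2$. Your explicit mention of Theorem \ref{thm:IntroInclusion} as the bridge between $\mathrm{im}(\Dim)$ and $\ker\Psi_G$ is in fact a small improvement in precision over the paper's one-line attribution.
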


It is interesting to ask whether the equality $\ker \Psi_G=C_{ba^+ } (G, p)$ also holds when $p$ is an odd prime; we were not able to find any counter-examples to this claim. The following computation gives some evidence for the relevance of the oriented Artin condition:

\begin{lemma}\label{lem:SemiDirect} Let $Q \trianglelefteq G$ be a cyclic subgroup of $G$ of order $p$ such that $G/Q \cong \ZZ/q^r$. Suppose that $G/Q$ acts on $Q$ with kernel order $q^l$. Then, $D^{\Omega } (G) \cong \ZZ / 2q^{r-l}$ and the equality $\ker \Psi _G = C_{ba^+} (G, p)$ holds.
\end{lemma}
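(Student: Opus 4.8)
The plan is to reduce the statement to computing the order $d$ of $\Omega_{G/1}$ in $D(G)$, and then to evaluate $d$ via the cohomological periodicity of $G$ (we take $p$ odd throughout this section; the factor $2$ in $2q^{r-l}$ is an odd-$p$ phenomenon, and for $p=2$ one checks directly that $D^\Omega(G)=0$). Since $|G|=pq^r$, the Sylow $p$-subgroup of $G$ is $S=Q$, which is normal, so $N_G(S)=G$, and the only $p$-subgroups of $G$ are $1$ and $S$; thus the family $\cF_G$ of non-Sylow $p$-subgroups is $\{1\}$, and $D^\Omega(G)$ is cyclic on $\Omega_{G/1}$ by Proposition \ref{prop:Generates}. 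By Lemma \ref{lem:Basis}, $C(G,p)$ has $\ZZ$-basis $\{\omega_{G/1},\omega_{G/S}\}$, with $\omega_{G/1}=\delta_{[1]}$ and $\omega_{G/S}=\delta_{[1]}+\delta_{[S]}$; so for $f=x\,\omega_{G/1}+y\,\omega_{G/S}$ one has $f(1)-f(S)=x$, and since $\Psi_G(\omega_{G/S})=\Omega_{G/S}=0$ while $\Psi_G(\omega_{G/1})=\Omega_{G/1}$, we get $\Psi_G(f)=x\,\Omega_{G/1}$. Hence $D^\Omega(G)\cong\ZZ/d$ and $\ker\Psi_G=\{f:d\mid f(1)-f(S)\}$, so it suffices to prove $d=2q^{r-l}$ and to identify $C_{ba^+}(G,p)$.

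To compute $d$, I would pass to the stable module category. Note that $\Delta(G/1)=\ker(kG\xrightarrow{\varepsilon}k)\cong\Omega_G k\oplus(\mathrm{proj})$ and $(\Omega_G k)^{\otimes m}\cong\Omega_G^m k\oplus(\mathrm{proj})$, so $m\,\Omega_{G/1}=[\Omega_G^m k]$ in $D(G)$ for every $m\ge 1$. Since $1$ is the only non-Sylow $p$-subgroup of $G$, a $p$-permutation module has all vertices non-Sylow if and only if it is projective; combined with the uniqueness of caps (Proposition \ref{pro:UniqueCaps}) and the absence of projective summands in $\Omega_G^m k$, this yields $m\,\Omega_{G/1}=0$ if and only if $\Omega_G^m k\cong k$ in $\mathrm{stmod}(kG)$. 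The Sylow $p$-subgroup $Q\cong C_p$ being cyclic, $\widehat H^*(G;k)$ is periodic and $\Omega_G^m k$ is stably trivial exactly when the $p$-period of $G$ divides $m$; by Swan \cite{Swan-Period} this period equals $2\,|N_G(Q)/C_G(Q)|$. As $Q\trianglelefteq G$ we have $N_G(Q)=G$ and $C_G(Q)=\ker(G\to\Aut Q)$, so $|N_G(Q)/C_G(Q)|=q^{r-l}$ by hypothesis; hence $d=2q^{r-l}$ and $D^\Omega(G)\cong\ZZ/2q^{r-l}$.

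For $C_{ba^+}(G,p)$: the Borel-Smith conditions on $S=Q\cong C_p$ reduce to the single requirement that $f(1)-f(S)$ be even (from the section $1\lhd Q$ in Definition \ref{def:BorelSmith}\ref{def:BorelSmith1}, the other conditions being vacuous for $C_p$), and the oriented Artin condition applies only with $K=Q$, $L=1$, and $H\le N_G(Q)=G$ with $H/Q$ a cyclic $q$-group; the choice $H=G$ gives $f(1)\equiv f(Q)\pmod{2q^{r-l}}$, which implies the weaker congruences coming from proper such $H$ as well as the Borel-Smith parity. Thus $C_{ba^+}(G,p)=\{f:2q^{r-l}\mid f(1)-f(S)\}$, and this coincides with $\ker\Psi_G$ by the previous paragraph, completing the proof.

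The one genuinely non-formal step is the lower bound $d\ge 2q^{r-l}$, i.e.\ that $\Omega_G^m k$ is not stably trivial for $0<m<2q^{r-l}$; this is exactly Swan's determination of the $p$-period, and I expect everything else to be bookkeeping. (The opposite bound $d\mid 2q^{r-l}$ also follows from the already-established inclusion $C_{ba^+}(G,p)\subseteq\ker\Psi_G$ via Theorems \ref{thm:OriBauer} and \ref{thm:IntroInclusion}, so Swan is really only needed for the lower bound.) A small point to verify is that $[\Omega_G^m k]=0$ in $D(G)$ forces $\Omega_G^m k\cong k$, not merely $\Omega_G^m k\cong k^a$: this follows since $\dim\underline{\mathrm{End}}_{kG}(\Omega_G^m k)=\dim\underline{\mathrm{End}}_{kG}(k)=1$.
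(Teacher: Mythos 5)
Your proof is correct and follows essentially the same route as the paper's: both reduce to computing the order of $\Omega_{G/1}$, identify it with the minimal $n$ for which $\Delta(G/1)^{\otimes n}\cong k\oplus(\mathrm{proj})$, and invoke Swan's theorem on the $p$-period to obtain $2q^{r-l}$, then identify $\ker\Psi_G$ with the congruence $2q^{r-l}\mid f(1)-f(S)$, which is exactly $C_{ba^+}(G,p)$. You are somewhat more explicit than the paper in routing through the Heller operator in $\mathrm{stmod}(kG)$ and in checking the small point that $[\Omega_G^m k]=0$ forces $\Omega_G^m k\cong k$, and you correctly flag that the lemma as stated presumes $p$ odd (for $p=2$ the group is cyclic, $\Omega_G k\cong k$, and $D^\Omega(G)=0$, so the formula $\ZZ/2q^{r-l}$ does not literally apply)---a caveat consistent with the paper's context, which treats this lemma as evidence for the odd-prime case only.
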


\begin{proof}  Since $Q$ is a Sylow $p$-subgroup of $G$, we have $D^{\Omega } (G)$ is generated by $\Omega _{G/1}$.
The cohomology of the group $G$ with $k$ coefficients is periodic, hence there exists an integer $n$ such that 
$\Delta (G/1) ^{\otimes n} \cong k\oplus (proj)$. 
The smallest such $n$ is equal to the $p$-period of $G$. By Swan \cite[Thm 1 and 2]{Swan-Period}, the $p$-period of  
$G$ is $2q^{r-l}$.
 
Note that $[k\oplus (proj)]=0$ in $D^{\Omega } (G)$. Conversely if $M$ is a Dade $kG$-module such that $[M]=0$ in $D^{\Omega } (G)$, then $M\cong k\oplus (proj)$. We conclude that the smallest $n$ such that $\Delta (G/1) ^{\otimes n} \cong k \oplus (proj)$ is equal to the exponent of $\Omega _{G/1}$. Hence the exponent of $\Omega _{G/1}$ is $q^{r-l}$.  Thus $D^{\Omega } (G)\cong \ZZ/ 2q^{r-l}$.

If $f = a_1 \omega _{G/1} + a_Q  \omega _{G/Q} $ is in the kernel of  $\Psi _G$, then $a_1 \Omega _{G/1}=0$
giving that  $2q^{r-l}$ divides $a_1$. Thus $a_1=f(1)-f(Q)\equiv 0 \pmod{2q^{r-l}}$, and hence $f\in C_{ba^+} (G, p)$.
\end{proof}

\end{document}